\theoremstyle{plain}\newtheorem{theo}{Theorem}
\theoremstyle{plain}\newtheorem{cor}[theo]{Corollary}
\theoremstyle{definition}\newtheorem{rem}[theo]{Remark}
\theoremstyle{plain}
\theoremstyle{plain}\newtheorem{lem}[theo]{Lemma}
\theoremstyle{plain}\newtheorem{prop}[theo]{Proposition}
\theoremstyle{definition}
\theoremstyle{definition}\newtheorem{hypo}[theo]{Assumption}
\newcommand{\N}{{\mathds N}}
\newcommand{\Z}{{\mathds Z}}
\DeclareMathOperator{\var}{Var}
\DeclareMathOperator{\cov}{Cov}
\DeclareMathOperator{\sgn}{sgn}
\begin{document}

\title[Convergence of U-statistics to integrals of L\'evy sheet]{Convergence of U-statistics indexed by a random walk to
stochastic integrals of a L\'evy sheet}

\author{Brice Franke}
\address{Universit\'e de Brest,
UMR CNRS 6205, Laboratoire de Math\'ematique de Bretagne Atlantique,
6 avenue Le Gorgeu, 29238 Brest cedex, France}
\email{Brice.Franke@univ-brest.fr}

\author{Fran\c{c}oise P\`ene}
\address{Universit\'e de Brest,
UMR CNRS 6205, Laboratoire de Math\'ematique de Bretagne Atlantique,
6 avenue Le Gorgeu, 29238 Brest cedex, France}
\email{francoise.pene@univ-brest.fr}

\author{Martin Wendler}
\address{Ruhr-Universit\"at Bochum, Fakult\"at f\"ur Mathematik, Universit\"atsstra\ss e 150, 44801 Bochum, Germany}
\email{Martin.Wendler@rub.de}

\keywords{random walk; random scenery; $U$-statistics; stable limits, L\'evy sheet\\
Fran\c{c}oise P\`ene is supported by the french ANR project MEMEMO2 (ANR-10-BLAN-0125).}

\begin{abstract}
A $U$-statistic indexed by a $\Z^{d_0}$-random walk $(S_n)_n$ is a process $U_n:=\sum_{i,j=1}^n
h(\xi_{S_i},\xi_{S_j})$ where $h$ is some real-valued function and $(\xi_k)_k$ is a sequence of iid random variables, which are  
independent of the walk. 
Concerning the walk, we assume either that it is transient or that its increments are in the normal domain of attraction of a strictly stable
distribution of exponent $\alpha\in[d_0,2)$. 
We further assume that the distribution of $h(\xi_1,\xi_2)$ belongs to the normal domain of attraction of a strictly stable distribution of exponent
$\beta\in(0,2)$. For a suitable renormalization $(a_n)_n$ we establish the convergence in distribution of the sequence of processes $(U_{\lfloor nt\rfloor}/a_n)_t;n\in\N$ 
to some suitable observable of a L\'evy sheet $(Z_{s,t})_{s,t}$. 
The limit process is the diagonal process $(Z_{t,t})_t$ when the walk is transient or when $ \alpha= d_0 $. When $ \alpha>d_0=1 $ the limit process is some stochastic integral 
with respect to $Z$.
\end{abstract}

\maketitle

\section{Introduction}

Given a random walk $(S_n)_{n\ge 0}$ on $\mathbb Z^{d_0}$ and a sequence of independent identically
distributed (iid) real random variables $(\xi_k)_{k\in\mathbb Z^{d_0}}$, independent
one from each other, one can consider the random walk in random scenery
$\mathcal S_n:=\sum_{k=1}^n\xi_{S_k}$. In particular one is interested in the limit behavior of the sequence of renormalized processes
$ (\nu_n^{-1}\mathcal S_{[nt]})_{t\geq0};n\in\N $.
In this context the following assumptions are usually made:

\begin{itemize} 
\item[(A)] either $ S_n $ is transient or there exists some  $\alpha\in[d_0,2]$ such that  $n^{-\frac 1\alpha}S_n; n\in\N$ converges in distribution to 
a random variable;
\item[(B)] $n^{-\frac 1\beta}\sum_{k=1}^n\xi_{k};n\in\N $
converges in distribution to 
a random variable for some $\beta\in(0,2]$.
\end{itemize}
 
Note that in the case  $\alpha>{d_0}=1$ the assumption (A) implies that the sequence of stochastic processes
$(n^{-\frac 1\alpha}S_{\lfloor nt\rfloor})_{t>0};n\in\N$ converges in distribution to
some $\alpha$-stable L\'evy process $(Y_t)_{t>0}$ which admits a local time $(\mathcal L_t(x),\ t\ge 0,\ x\in\mathbb R)$.
Similarly, assumption (B) implies that  $(n^{-\frac 1\beta}\sum_{k=1}^{\lfloor nt\rfloor}\xi_{k})_{t>0};n\in\N $ 
converges in distribution to some $\beta$-stable process $(\mathcal Z_t)_{t>0}$.\footnote{to simplify notations, for every $k\in\mathbb Z$, we write
$\xi_k$ for $\xi_{(k,...,k)}$}
Subsequently we will use  $(\mathcal Z_{-t})_{t>0}$ to denote an independent copy of $(\mathcal Z_t)_{t>0}$.

Random walks in random scenery have been studied by many authors since the early works of Borodin \cite{Borodin1,Borodin2} and Kesten and Spitzer \cite{kest}.
In particular, \cite{Bolthausen,DU,cast} complete the study of the limit
in distribution of random walks in random scenery. 
The asymptotic behavior of the sequence $ (\nu_n^{-1}\mathcal S_{\lfloor nt\rfloor})_{t>0};n\in\N$ is summarized in the following table
(where $d_1$ and $d_2$ are explicit constants depending on $(S_n)$ and on $\beta$):

$$\begin{array}{|c|c|c|c|}
\hline
\mbox{Cases}&\mbox{normalization}&\mbox{Limit process}&\mbox{Space of convergence in distribution}\\
\hline
\mbox{transient}&\nu_n:=n^{\frac 1\beta}&(d_1\mathcal Z_t)_t&
\begin{array}{c}\mbox{finite distributions}\\
\mbox{if }\beta\ne 1\mbox{:Skorokhod space with }M_1\mbox{-metric}
\end{array}\\
\hline
\alpha={d_0}&\nu_n:=n^{\frac 1\beta}(\log n)^{1-\frac 1\beta}&(d_2\mathcal Z_t)_t&\begin{array}{c}\mbox{finite distributions}\\
\mbox{if }\beta\ne 1\mbox{:Skorokhod space with }M_1\mbox{-metric}
\end{array}\\
\hline
\alpha>{d_0}&\nu_n:=n^{1-\frac 1\alpha+\frac 1{\alpha\beta}}&(\Delta_t:=\int_{\mathbb R^*}\mathcal L_t(x)\,
d{\mathcal Z}_x)_t&\mbox{Skorokhod space with }J_1\mbox{-metric}\\
\hline
\end{array}$$

In this paper we want to do a similar investigation for U-statistics indexed by a random walk. 
To introduce the objects let $ E $ be some measurable set and  $(\xi_k)_{k\in\mathbb Z^{d_0}}$ an  iid  sequence of $E$-valued random
variables. Often we might abbreviate this family of random variables by $ \xi $ and call it the scenery.
Moreover, let $(S_n)_{n\ge 1}$ be as above a random walk on $ \mathbb{Z}^{d_0} $, which is independent of the scenery $\xi$. 
We will also use the short notation $ S $ for the random walk. 
For some measurable function $ h:E^2\rightarrow\mathbb R $, we consider the $U$-statistic indexed by $S$ defined through
$$U_n:=\sum_{i,j=1}^nh(\xi_{S_i},\xi_{S_j}).$$

We are interested in results of distributional convergence for $(U_n)_n$ (after some suitable normalization)
under the assumption that the distribution of $h(\xi_1,\xi_2)$ is in the normal domain of attraction of a $\beta$-stable distribution.
Let us assume without loss of generality that $h$ is symmetric.

If $ \beta>1 $ we can introduce $\vartheta_k:=\mathbb E[h(\xi_0,\xi_k)|\xi_0]$. 
Two different situations can occur. We will say that the kernel is degenerate if $\vartheta_1= 0$ almost surely. Otherwise, we will say that the kernel is 
non-degenerate.

The case when when $h(\xi_1,\xi_2)$ is square integrable and centered (which implies  $\beta=2$) has been fully studied by Guillotin-Plantard and her co-authors. 
In this case only two kind of behaviors can occur:
\begin{itemize}
\item[(a)] the kernel is non-degenerate, then one can use Hoeffding decomposition to show that $U_n$ behaves essentially as
$\sum_{i,j=1}^n(\vartheta_{S_i}+\vartheta_{S_j})
=2n\sum_{i=1}^n\vartheta_{S_i}$.
\item[(b)] the kernel is degenerate, then Hilbert-Schmidt theory can be used to represent the kernel as $h(x,y)=\sum_p\lambda_p\phi_p(x)\phi_p(y)$ 
and to show that $U_n$ behaves as $\sum_p\lambda_p(\sum_{i=1}^n\phi_p(S_i))^2$.
\end{itemize}
This has been proved by Cabus and Guillotin-Plantard in
\cite{cab} for random walks in $ \Z^{d_0} $ with $ d_0\geq 2 $  and by 
Guillotin-Plantard and Ladret in \cite{guil} for random walks in $ \Z $. 

Note that the situation treated in \cite{cab} splits into the case $ d_0>2 $, where the walk is transient, and the singular case $ d_0=2 $, where the random walk is null recurrent. 
However, in this last case the limit process $ (Y_t)_{t\geq0} $ does not have local time.  
In contrast to this the assumptions made in \cite{guil} correspond to some null recurrent random walk with existing local time for $ (Y_t)_{t\geq0} $; i.e.:  $\alpha>d_0=1$.

The special form of the representations given in (a) and (b) implies that for $\beta=2$, the study of $(U_n)_n$ 
can be reduced to the study of some suitable random walk in random scenery (either $\sum_{i=1}^n\vartheta_{S_i}$ or $\sum_{i=1}^n\phi_p(S_i)$). 
Thus the limits can be expressed in terms of processes which already occurred in the random scenery situation. 

In the transient case or if $ d_0= 2 $  the limit process turns out to be Brownian motion $ (B_t)_{t\geq0} $ when the kernel is non-degenerate. 
In the degenerate situation the limit has the representation $ \sum_p\lambda_p (B_t^{(p)})^2 $, where $ (B_t^{(p)})_{t\geq0};p\in\N $ is a sequence of independent
Brownian motions (see \cite{cab}).  

If on the other hand $\alpha>d_0=1$, then  in the non-degenerate situation the limit is the usual process $\Delta_t:=\int_{\mathbb R^*}\mathcal L_t(x)\,dB_x$, where $(B_x)_{x>0}$ and  $(B_{-x})_{x>0}$ are independent one-dimensional Brownian motions. 
In the degenerate case the limit takes the form $ \sum_p\lambda_p\big(\int_{\mathbb R^*}\mathcal L_t(x)\,dB^{(p)}_x\big)^2 $, where the 
pairs $ (B^{(p)}_x)_{x>0} $, $ (B^{(p)}_{-x})_{x>0} $ form a sequence of independent copies of the pair  $(B_x)_{x>0}$, $(B_{-x})_{x>0}$ (see \cite{guil}). \\[-2mm]

Let us further mention that (a) includes the case where $h(x,y)=g(x)+g(y)$ and that (b) includes the case when $h(x,y)=g(x)g(y)$. 
Here $g:E\rightarrow\mathbb{R}$ is a measurable function such that $g(\xi_1)$ is square integrable and centered.\\[-2mm]

When $ 1<\beta<2$, a similar behavior can occur in the non-degenerate case. 
For instance, in \cite{BFMa}, we use Hoeffding 
decomposition to prove the following:
\begin{itemize}
\item[(a')] If $1<\beta\le 2$ and if the distribution of $\vartheta_1$ is
in the normal domain of attraction of a $\beta$-stable distribution
then $U_n$ behaves as $2n\sum_{i=1}^n\vartheta_{S_i}$.
\end{itemize}
This holds for example if $h(x,y)=g(x)+g(y)$.
The limit then turns out to be $ \beta $-stable L\'evy process $ (Z_t)_{t\geq0} $ when the walk is transient or when $ \alpha= d_0 $. 
However, when $ \alpha>d_0$  the limit has the representation $\Delta_t:=\int_{\mathbb R^*}\mathcal L_t(x)\,dZ_x$, where $(Z_x)_{x>0}$ and  $(Z_{-x})_{x>0}$ are independent one-dimensional $ \beta $-stable L\'evy-motions (see \cite{BFMa}). 

On the other hand in the degenerate case, when $ \vartheta_1=0 $, different limits than those described in (b) can arise when $ 0<\beta<2 $. This is the purpose of the present paper.
The limit we obtain is the diagonal process $ (Z_{(t,t)})_{t\geq0} $ of a L\'evy sheet $ (Z_{t,s})_{t,s\geq0} $, when the walk is transient or when $\alpha=d_0$, and a stochastic integral 
$ \int_{\mathbb{R}^2}L_t(x)L_t(y)dZ_{x,y} $ with respect to four independent copies of the L\'evy sheet introduced above, when $\alpha>d_0$. 
These limits can be understood as two-dimensional analogues of the known limits for random walk in random scenery found by Kesten and Spitzer (see \cite{kest}).

To be more precise, let us keep assumption {\bf (A)} but replace {\bf (B)} on $(\xi_k)_k$ by the following assumption 
on $(h(\xi_k,\xi_\ell))_{k,\ell}$:

\begin{itemize}
\item[(B')] $(n^{-\frac 1\beta}\sum_{k=1}^nh(\xi_{2k},\xi_{2k+1}))_n $
converges in distribution to 
a random variable  with $\beta\in(0,2)$.
\end{itemize}

This implies that if $(h_{i,j})_{i,j}$ is a sequence of iid random variables
with the same distribution as $h(\xi_1,\xi_2)$, then the sequence of stochastic processes 
$(n^{-\frac 2\beta}\sum_{k=1}^{\lfloor nt\rfloor}\sum_{\ell=1}^{\lfloor ns\rfloor}
h_{i,j})_{t>0};n\in\N $ converges in law to  some $\beta$-stable L\'evy sheet  $(Z_{s,t})_{s,t>0}$ (which we extend on $\mathbb R^2$). 

In the present paper, under assumption (B') and some additional assumptions, we prove limit theorems for the $U$-statistic
which are summarized in the following table:

$$\begin{array}{|c|c|c|c|}
\hline
\mbox{Cases}&\mbox{normalization}&\mbox{Limit process}&\mbox{Space of convergence in distribution}\\
\hline
\mbox{transient}&\nu_n^2=n^{\frac 2\beta}&(d_1^2Z_{t,t})_t&
\mbox{finite distribution}\\
\hline
\alpha={d_0}&\nu_n^2=n^{\frac 2\beta}(\log n)^{2-\frac 2\beta}&(d_2^2Z_{t,t})_t&\mbox{finite distribution}\\
\hline
\alpha>{d_0}&\nu_n^2=n^{2-\frac 2\alpha+\frac 2{\alpha\beta}}&(\int_{\mathbb R^2}\mathcal L_t(x)\mathcal L_t(y)\, dZ_{x,y})_t&\mbox{Skorokhod space with }J_1\mbox{-metric}\\
\hline
\end{array}$$
\vspace{1mm}

The present paper is organized as follows. The assumptions and main results are stated in Section \ref{results}.
We give some examples which satisfy our assumptions  in Section \ref{examples}. We prove our results concerning convergence 
of finite distribution in Section \ref{finitedistrib}. In the spirit of \cite{DDMS},
our proof relies on the convergence of a suitably defined point process to a Poisson point process which is 
established by the use of Kallenberg theorem.
In Section \ref{tightness}, we prove the tightness for the $J_1$-metric when $\alpha>d_0$.
We complete our article with some facts on the $\beta$-stable L\'evy sheet $Z$ in Appendix \ref{calculsto}. In particular 
a construction of stochastic integrals with respect to $Z$ is given.

\section{Main results}\label{results}

Let $ (\Omega,\mathcal F,\mathbb{P}) $ be a suitable probability space and 
let $S=(S_n)_{n\ge 0}$ be a $\mathbb Z^{d_0}$-valued random walk on $  (\Omega,\mathcal F,\mathbb{P}) $ with
$S_0=0$ such that one of the following conditions holds:

\begin{itemize}
\item the random walk $(S_n)_{n\ge 0}$ is transient,
\item the random walk $(S_n)_{n\ge 0}$ is recurrent and there exists
$\alpha\in[d_0,2]$ such that $(n^{-{\frac 1\alpha}}S_n)_{n\ge 1}$ converges in distribution to
a random variable $Y$. In this case we further assume that 
$ \forall x\in\mathbb{Z}^{d_0}, \exists n\in\mathbb{N} : \mathbb{P}(S_n=x)>0 $.
\end{itemize}

Recall that, in the second case, $(n^{-{\frac 1\alpha}}S_{\lfloor nt\rfloor})_{t>0};n\in\N$ converges in distribution to 
an $\alpha$-stable process $(Y_t)_{t>0}$  such that $Y_1$ has the same law as $Y$.

In order to get a uniform notation for the different situations, we define $ \alpha_0 $ to be a number, which is one when the random walk is transient, and which takes the value 
$ \frac{\alpha}{d_0} $ in the recurrent case. 

Let $\xi=(\xi_\ell)_{\ell\in\mathbb Z^{d_0}}$ be a family of iid random variables on $  (\Omega,\mathcal F,\mathbb{P}) $  with values in some measurable
space $E$.
We assume that the two families $S$ and $\xi$ are independent.
Let $h:E\times E\rightarrow \mathbb R$ be a measurable function.
We are interested in the properties of the U-statistics process
$U_n:=\sum_{i,j=1}^nh(\xi_{S_i},\xi_{S_j})$.
In this work, we assume moreover that the following properties are satisfied.

\begin{hypo}\label{HYP}Let $\beta\in(0,2)$.
\begin{itemize}
\item[(i)] for every $x\in E$, $h(x,x)=0$;
\item[(ii)] $h$ symmetric (i.e. $h(x,y)=h(y,x)$ for every $x,y\in E$);
\item[(iii)] there exist $c_0,c_1\in[0,+\infty)$ with $c_0+c_1>0$ 
such that
\begin{equation}\label{hyph1}
\forall z>0,\ \ 
\mathbb P(h(\xi_1,\xi_2)\ge z)=z^{-\beta} L_0(z),\ \ \mbox{with}\ \ 
\lim_{z\rightarrow+\infty} L_0(z)=c_0;
\end{equation}
and
\begin{equation}\label{hyph1b}
\forall z>0,\ \ 
\mathbb P(h(\xi_1,\xi_2)\le -z)=z^{-\beta}L_1(z),\ \ \mbox{with}\ \ 
\lim_{z\rightarrow+\infty}L_1(z)=c_1;
\end{equation}
\item[(iv)] there exist $C_0>0$ and $\gamma>\frac{3\beta}{4}$ such that
\begin{equation}\label{hyph2}
\forall z,z'\in(0,+\infty),\ \ \ 
\mathbb P\Big(
    |h(\xi_1,\xi_2)|\ge z\ \mbox{and}\  |h(\xi_1,\xi_3)|\ge z'\Big)\le C_0\big(\max(1,z)\max(1,z')\big)^{-\gamma};
\end{equation}
\item[(v)] If $\beta>1$, then $\mathbb E[h(\xi_1,\xi_2)]=0$;
\item[(vi)] If $\beta\ge 4/3$, 
there exists $C'_0>0$ and $\theta'>\frac{3\beta}4-1$ such that
$$\forall M,M'\in(0,+\infty),\ \
\left|{\mathbb E}\left[{\mathbf h}_M(\xi_1,\xi_2){\mathbf h}_{M'}(\xi_1,\xi_3)\right]\right|\le
C'_0(MM')^{-\theta'},$$
where
${\mathbf h}_M(x,y):=h(x,y)\mathbf 1_{\{|h(x,y)|\le M\}}+\frac{\beta}{\beta-1}(c_0-c_1)M^{1-\beta}$.
\item[(vii)] If $\beta=1$, then $c_0=c_1$ and $\lim_{M\rightarrow +\infty}\mathbb E[h(\xi_1,\xi_2)
\mathbf 1_{\{|h(\xi_1,\xi_2)|\le M\}}]=0$.
\end{itemize}
\end{hypo}

Some examples satisfying the above assumptions are presented in the next section.

\begin{rem} \label{RQE0}
The following comments on the different points in  Assumptions  \ref{HYP} might be of some help:
\begin{itemize}
\item  Item (i) can be relaxed as will be proved in  Proposition \ref{relax} below.
\item  Item (ii) is not restrictive since one can always 
replace $h(z,z')$ by $(h(z,z')+h(z',z))/2$ without changing the sequence $(U_n)_n$.
\item Note that Item (iv) is a condition which ensures that the tail behavior resulting from coupling of the pairs $ (\xi_1,\xi_2) $  and 
        $ (\xi_1,\xi_3) $ does not interfere with the tail behavior of the single terms $ h(\xi_1,\xi_2) $.  A condition with the same spirit is condition (2.1) in \cite{DDMS}.
\item If Item (iii) holds and if for every $x\in E$ the distribution of $h(x,\xi_1)$ is symmetric, then Item (vi) and Item (vii) are also satisfied. 
Indeed, in this case, $c_0=c_1$ and
$$ \ \ \ \ \ \ {\mathbb E}\left[{\mathbf h}_M(\xi_1,\xi_2){\mathbf h}_{M'}(\xi_1,\xi_3)\right]
=\int_E\, {\mathbb E}[h(x,\xi_2)\mathbf 1_{\{|h(x,\xi_2)|\le M\}}]
{\mathbb E}[h(x,\xi_2)\mathbf 1_{\{|h(x,\xi_2)|\le M'\}}]\, d\mathbb P_{\xi_1}(x) =0.$$
\item Note that Item (iii) and Item (v) imply that the law of $ h(\xi_1,\xi_2) $ is in the domain of attraction of a $ \beta $-stable law for some $ \beta\in(0,2) $.
\end{itemize}
\end{rem}

Let $(h_{i,j})_{i,j}$ be a sequence of iid random variables with same
distribution as $h(\xi_1,\xi_2)$.
Observe that the Items (i), (iii), (v) and (vii) in Assumption \ref{HYP} describe the classical situation, where the sequence of random fields
$(n^{-\frac 2\beta}\sum_{i=1}^{\lfloor nx\rfloor}\sum_{j=1}^{\lfloor ny\rfloor}h_{i,j})_{x,y>0};n\in\N$ converges in law 
to a $\beta$-stable L\'evy sheet $ (\tilde Z_ {x,y})_{x,y\geq0}$ such that the characteristic function of $\tilde Z_{x,y}$ is given by
${\mathbb E}[e^{iz\tilde Z_{x,y}}]=\Phi_{xy(c_0+c_1),xy(c_0-c_1),\beta}(z)$, with

\begin{equation}\label{FonCar}
\Phi_{A,B,\beta}(z):=\exp\left(-|z|^\beta\int_0^{+\infty}\frac{\sin t}{t^\beta}\, dt\left(A-iB\sgn(z)
  \tan\frac{\pi\beta} 2\right)\right)\ \ \mbox{if}\ \beta\ne 1
\end{equation}
and
\begin{equation}\label{FonCar1}
\Phi_{A,B,1}(z):=\exp\Big(-|z|\left(\frac{\pi}{2} A+iB\sgn(z)\log|z|\right)\Big)
\end{equation}
(see \cite[p. 568-569]{Feller}).
In order to construct a continuation of the L\'evy sheet $ \tilde{Z} $ to all of $ \mathbb{R}^2 $ 
we use four independent copies $Z^{(\varepsilon,\varepsilon')}$ (with $\varepsilon,\varepsilon'\in\{1,-1\}$) of $\tilde Z$
to introduce $Z_{x,y}:=Z_{|x|,|y|}^{(\sgn(x),\sgn(y))}$ for all $ (x,y)\in\mathbb{R}^2 $.
In the following we will need to integrate some continuous compactly supported function $\psi$ with respect to $Z$, i.e.:
$$ \int_{\mathbb R^2}\psi(x,y)\, dZ_{x,y}.$$
More information on L\'evy sheets and on the construction of the integral can be found in Appendix \ref{calculsto}.

When $ \alpha>d_0=1 $, we assume moreover that $(Z_{x,y})_{x,y}$ is independent of the $\alpha$-stable process $(Y_t)_t$.

If the random walk is transient, we write $N_\infty$
for the total number of visits of the two sided random walk $(S_n)_{n\in\mathbb Z}$ to zero; i.e.: $N_\infty:=\sum_{n\in\mathbb Z}\mathbf{1}_{\{S_n=0\}}$.

\begin{theo}[Transient case]\label{THM0}
Suppose $(S_n)_{n\ge 0}$ is transient and Assumption \ref{HYP}. We set $a_n:=n^{\frac 2\beta}$. Then the finite distributions of $((U_{\lfloor nt\rfloor}/a_n)_{t>0})_n$
converge to the finite distributions of $(K_\beta^{\frac 2\beta} Z_{t,t})_{t>0}$,
with $K_\beta:={\mathbb E}[N_\infty^{\beta-1}]$.
\end{theo}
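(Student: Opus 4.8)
The plan is to follow the point-process strategy announced in the introduction. For the transient walk the crucial feature is that, almost surely under $S$, each site $x\in\mathbb Z^{d_0}$ is visited only finitely many times; write $N_n(x):=\#\{1\le i\le n: S_i=x\}$ and $R_n:=\{S_1,\dots,S_n\}$ for the range. Since $h(\xi_x,\xi_x)=0$, one can rewrite
$$
U_n=\sum_{x\neq y\in R_n} N_n(x)N_n(y)\, h(\xi_x,\xi_y)
   =\sum_{\{x,y\}\subset R_n,\ x\ne y} 2N_n(x)N_n(y)\, h(\xi_x,\xi_y),
$$
so that, conditionally on the walk, $U_n$ is a weighted sum over \emph{unordered} pairs of distinct visited sites of iid copies of $h(\xi_1,\xi_2)$ with deterministic weights $2N_n(x)N_n(y)$. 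The first step is therefore to freeze a realization of $S$ and to study, for fixed $t>0$, the conditional law of $U_{\lfloor nt\rfloor}/a_n$.

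The second step is the heart of the argument: show that the point process
$$
\Pi_n:=\sum_{\{x,y\}\subset R_{\lfloor nt\rfloor},\ x\ne y}\ \delta_{\,2N_{\lfloor nt\rfloor}(x)N_{\lfloor nt\rfloor}(y)\, h(\xi_x,\xi_y)/a_n}
$$
converges (conditionally on $S$, for $\mathbb P_S$-a.e. realization) to a Poisson point process on $\mathbb R\setminus\{0\}$ whose intensity is that of a $\beta$-stable random variable with the correct scale and skew. By Kallenberg's theorem it suffices to check convergence of the avoidance/Laplace functionals, i.e. convergence of $\mathbb E_\xi[\Pi_n(B)]$ to the limiting intensity for intervals $B$ bounded away from $0$, together with the simplicity/no-clustering condition. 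The intensity computation reduces, via \eqref{hyph1}--\eqref{hyph1b}, to controlling $\sum_{\{x,y\}} \mathbb P(|h(\xi_1,\xi_2)|\ge z a_n/(2N_{\lfloor nt\rfloor}(x)N_{\lfloor nt\rfloor}(y)))$, which by regular variation behaves like $z^{-\beta}a_n^{-\beta}2^\beta\sum_{\{x,y\}}(N_{\lfloor nt\rfloor}(x)N_{\lfloor nt\rfloor}(y))^\beta$; since $a_n^\beta=n^2$ and, for the transient walk, $\tfrac1{n^2}\sum_{x\ne y}(N_{\lfloor nt\rfloor}(x)N_{\lfloor nt\rfloor}(y))^\beta$ converges a.s. (this is where $K_\beta=\mathbb E[N_\infty^{\beta-1}]$ enters: by the ergodic/renewal structure of the transient range, $\frac1n\sum_{x\in R_n}N_n(x)^{\beta-1}\to K_\beta$ while $\sum_x N_n(x)=n$, so $\frac1{n^2}\big(\sum_x N_n(x)^\beta\big)^2$-type sums concentrate on $t^2 K_\beta^2$), one gets a deterministic limiting intensity proportional to $t^2 K_\beta$. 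The "no two large contributions" part is exactly what Assumption \ref{HYP}(iv)--(vi) is designed for: \eqref{hyph2} and the bound in (vi) prevent a single $\xi_x$ from producing two simultaneously large terms $h(\xi_x,\xi_y),h(\xi_x,\xi_{y'})$, which is the only source of clustering in the diagonal-type point process.

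The third step converts the Poisson convergence into convergence in law of $U_{\lfloor nt\rfloor}/a_n$: summing the atoms, one has to pass from $\Pi_n$ to $\sum$ of atoms, which requires controlling the contribution of small atoms (a truncation argument using (iv), (vi), (vii), and the centering in (v) when $\beta\ge1$, to show the truncated sum of small jumps is negligible uniformly in $n$). This yields, for each fixed $t$, convergence of $U_{\lfloor nt\rfloor}/a_n$ in distribution to a $\beta$-stable variable whose characteristic function is $\Phi_{A,B,\beta}$ with $A=t^2 K_\beta(c_0+c_1)\cdot(\text{const})$, $B=t^2K_\beta(c_0-c_1)\cdot(\text{const})$, and a matching check of the normalizing constant $\int_0^\infty (\sin t)/t^\beta\,dt$ shows this is exactly the law of $K_\beta^{2/\beta}Z_{t,t}$ (recall $Z_{x,y}$ has the same law as the iid-array double sum, so $Z_{t,t}$ is $\beta$-stable with parameters $t^2(c_0\pm c_1)$, and scaling by $K_\beta^{2/\beta}$ multiplies the scale parameter by $K_\beta$). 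Finally, for the \emph{joint} finite-dimensional convergence at times $t_1<\dots<t_m$ one runs the same Poisson argument for the joint point process $\sum_{\{x,y\}}\delta_{(2N_{\lfloor nt_1\rfloor}(x)N_{\lfloor nt_1\rfloor}(y)/a_n,\dots,2N_{\lfloor nt_m\rfloor}(x)N_{\lfloor nt_m\rfloor}(y)/a_n)\,h(\xi_x,\xi_y)}$ on $\mathbb R^m$, whose limiting intensity is supported on the rays $\{(u\,r_1,\dots,u\,r_m):u\in\mathbb R\setminus\{0\}\}$ determined by the a.s. limits $r_k$ of $\frac1{n^2}\sum_{\{x,y\}}(N_{\lfloor nt_k\rfloor}(x)N_{\lfloor nt_k\rfloor}(y))^\beta$-weights; this perfect correlation structure is precisely what makes the limit the single diagonal process $(Z_{t,t})_t$ rather than an independent family. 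Throughout, the conditioning on $S$ must be removed at the end by noting the limit is deterministic in $S$ (the intensity limits are a.s. constants), so the conditional convergence upgrades to unconditional convergence.

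The main obstacle I expect is the second step: verifying the Kallenberg conditions for the two-dimensional array of pair-indexed variables, in particular ruling out clustering coming from the shared-index pairs $(x,y),(x,y')$ — this is genuinely two-dimensional (unlike the Kesten--Spitzer scenery case) and is the reason Assumption \ref{HYP}(iv) and (vi) are imposed with the specific exponents $\gamma>\tfrac{3\beta}{4}$ and $\theta'>\tfrac{3\beta}{4}-1$; making the counting of such pairs precise, against the random geometry of the transient range, and showing their contribution to the intensity on sets bounded away from $0$ vanishes, is the delicate point. A secondary technical nuisance is the a.s. convergence $\frac1n\sum_{x\in R_n}N_n(x)^{\beta-1}\to K_\beta$ when $\beta<2$ (so these weights need not be integrable in $\xi$, only in $S$), which one handles by a renewal decomposition of the transient range into iid-like blocks together with a law of large numbers, exactly as in the $\beta<2$ scenery literature (\cite{BFMa}, \cite{DDMS}).
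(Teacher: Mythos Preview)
Your overall architecture---condition on $S$, show Poisson convergence of the pair-indexed point process via Kallenberg, then truncate to pass to sums---is exactly the paper's strategy, and your identification of Assumption~\ref{HYP}(iv),(vi) as the anti-clustering device for shared-index pairs is correct. For a single time $t$ the sketch is essentially right (the paper proves the a.s.\ convergence of $a_n^{-\beta}\sum_{x,y}(N_{\lfloor nt\rfloor}(x)N_{\lfloor nt\rfloor}(y))^\beta$ not by a renewal/ergodic argument but by a moment method: all integer moments of the empirical pair-weight distribution converge, the limit is moment-determined, hence the $\beta$-th moment converges---see Lemma~\ref{LEM0}).

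The genuine gap is in your treatment of the joint finite-dimensional distributions. Your claim that the limiting $\mathbb R^m$-valued point process is supported on a \emph{single} ray $(r_1,\dots,r_m)\mathbb R$ is false, and it would give the wrong limit: summing atoms on a single ray produces $(r_1V,\dots,r_mV)$ for one $\beta$-stable variable $V$, i.e.\ perfectly linearly dependent marginals, whereas the target $(Z_{t,t})_{t}$ has \emph{independent increments} (since $Z_{t_j,t_j}-Z_{t_{j-1},t_{j-1}}$ is the $Z$-increment over the disjoint L-shaped regions $[0,t_j]^2\setminus[0,t_{j-1}]^2$). Concretely, for $t_1<t_2$ a pair $(x,y)$ with $y$ first visited after time $nt_1$ contributes a point on the axis $\{0\}\times\mathbb R$, not on the diagonal; so the limiting intensity sits on \emph{several} rays, and the bookkeeping of which pairs lie on which ray is exactly the content the paper packages differently. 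The paper avoids the $\mathbb R^m$-point-process picture entirely: it passes to increments via the identity
\[
N_{\lfloor nt_i\rfloor}(x)N_{\lfloor nt_i\rfloor}(y)-N_{\lfloor nt_{i-1}\rfloor}(x)N_{\lfloor nt_{i-1}\rfloor}(y)=\sum_{k,\ell}\mathbf 1_{\{\max(k,\ell)=i\}}\,d_{k,n}(x)d_{\ell,n}(y),
\]
works with the scalar weights $\zeta_{n,x,y}=\sum_{k,\ell}\theta_{k,\ell}d_{k,n}(x)d_{\ell,n}(y)$, and shows in Lemma~\ref{LEM0} that
\[
a_n^{-\beta}\sum_{x,y}|\zeta_{n,x,y}|_\pm^\beta\ \longrightarrow\ K_\beta^2\sum_{k,\ell}|\theta_{k,\ell}|_\pm^\beta(t_k-t_{k-1})(t_\ell-t_{\ell-1})\quad\text{a.s.}
\]
The key combinatorial point (proved via the moment method) is that the $\beta$-th power sum asymptotically \emph{factorizes} over $(k,\ell)$, because for a transient walk most sites $x$ have $d_{k,n}(x)>0$ for only one index $k$. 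Specializing to $\theta_{k,\ell}=\theta_{\max(k,\ell)}$ then gives $K_\beta^2\sum_j|\theta_j|_\pm^\beta(t_j^2-t_{j-1}^2)$, which after the Poisson/truncation steps yields the product characteristic function $\prod_j\Phi_{(c_0+c_1)K_\beta^2(t_j^2-t_{j-1}^2),(c_0-c_1)K_\beta^2(t_j^2-t_{j-1}^2),\beta}(\theta_j)$, i.e.\ independent increments. You should replace your ray argument by this increment decomposition.
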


In particular the previous theorem holds for the deterministic $ \mathbb{Z} $-valued walk $ S_n=n $ (for which $K_\beta=1$). In that case our result
boils down to a result on classical U-statistics which was established by Dabrowski, Dehling, Mikosch and Sharipov in  \cite{DDMS}.
We emphasize this point in the following corollary, since the link to the L\'evy sheet was not mentioned  in \cite{DDMS}.

\begin{cor}[Deterministic case]\label{classicalUstat}
 Suppose Assumption \ref{HYP} and set $a_n:=n^{\frac 2\beta}$. The finite distributions of
 $((\sum_{i,j=1}^{\lfloor nt\rfloor}h(\xi_i,\xi_j)/a_n)_{t>0})_n$ converge to the
 finite distributions of $(Z_{t,t})_{t>0}$.
\end{cor}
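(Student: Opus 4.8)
The plan is to obtain Corollary \ref{classicalUstat} directly from Theorem \ref{THM0} by specializing to the deterministic $\mathbb{Z}$-valued random walk $S_n=n$, so that the whole content of the corollary reduces to a computation of the constant $K_\beta$ for this particular walk.

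First I would check that $S_n=n$ is admissible in the framework of Section \ref{results}. Its increments form the (degenerate) iid sequence constantly equal to $1$, and since $S_n=n$ visits every point of $\mathbb{Z}$ at most once, the walk is transient; it therefore satisfies the first of the two standing alternatives, and in particular no irreducibility-type hypothesis is imposed on it. With $d_0=1$ and $S_i=i$ one has $\xi_{S_i}=\xi_i$, so $U_n=\sum_{i,j=1}^n h(\xi_i,\xi_j)$ is exactly the object appearing in the corollary, Assumption \ref{HYP} is by hypothesis in force, and the normalization $a_n=n^{\frac 2\beta}$ is the one prescribed by Theorem \ref{THM0} in the transient case.

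Next I would compute $K_\beta$. For the two-sided walk $(S_n)_{n\in\mathbb{Z}}$ given by $S_n=n$ we have $S_n=0$ if and only if $n=0$, hence $N_\infty=\sum_{n\in\mathbb{Z}}\mathbf{1}_{\{S_n=0\}}=1$ almost surely, so $K_\beta=\mathbb{E}[N_\infty^{\beta-1}]=1$ and $K_\beta^{\frac 2\beta}=1$. Inserting this into the conclusion of Theorem \ref{THM0} gives that the finite distributions of $((U_{\lfloor nt\rfloor}/a_n)_{t>0})_n$ converge to those of $(K_\beta^{\frac 2\beta}Z_{t,t})_{t>0}=(Z_{t,t})_{t>0}$, which is precisely the assertion.

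There is essentially no obstacle here beyond making explicit that the degenerate walk $S_n=n$ fits the hypotheses; that is the only point requiring a remark. If a self-contained argument were preferred, one could instead run the point-process proof of Section \ref{finitedistrib} with $S_n=n$: every site is then visited exactly once, the local-time weights disappear, and the argument collapses to the convergence (via Kallenberg's theorem) of the point process $\{h(\xi_i,\xi_j)/a_n:\ 1\le i<j\le\lfloor nt\rfloor\}$ to a Poisson point process, recovering the result of Dabrowski, Dehling, Mikosch and Sharipov \cite{DDMS} and identifying its limit with the diagonal of the $\beta$-stable L\'evy sheet $Z$.
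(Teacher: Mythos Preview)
Your proposal is correct and matches the paper's own treatment: the corollary is stated immediately after Theorem \ref{THM0} with the parenthetical remark that the deterministic walk $S_n=n$ is transient with $K_\beta=1$, and no separate proof is given. Your verification that $N_\infty=1$ (hence $K_\beta=\mathbb E[N_\infty^{\beta-1}]=1$) and that the transient-case hypotheses are satisfied is exactly what is needed.
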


As usual $\Gamma$ will stand for the Gamma function.
We also write $N_n(x)$ for the occupation time of $S$ at $x$ up to time $n$, i.e.:
$$N_n(x):=\sum_{i=1}^n\mathbf{1}_{\{S_i=x\}}.$$
We define the maximal occupation time
of $S$ up to time $n$ through $N_n^*:=\max_xN_n(x)$  and  the range of $S$ up to time $n$ by
$$R_n:=\#\{y\in\mathbb Z^{d_0}\, :\, N_n(y)>0\}.$$
We recall that, when $\alpha=d_0$, there exists $c_3>0$ such that 
\begin{equation}\label{range}
R_n\sim c_3n/\log n\ a.s.\ \ \mbox{as}\ \ n\rightarrow\infty.
\end{equation}

\begin{theo}[Recurrent case without local time]\label{THM00}
Suppose $\alpha=d_0\in\{1,2\}$ and Assumption \ref{HYP}. We set $a_n:=n^{\frac 2\beta}(\log n)^{2-\frac 2\beta}$. Then the finite distributions of 
$((U_{\lfloor nt\rfloor}/a_n)_{t>0})_n$
converge to the finite distributions of $(K_\beta^{\frac 2\beta} Z_{t,t})_{t>0}$,
with $K_\beta:=\Gamma(\beta+1)/c_3^{\beta-1}$ and with $c_3$ given by (\ref{range}).
\end{theo}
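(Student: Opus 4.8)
The proof proceeds exactly as for Theorem~\ref{THM0}, the only change being that the occupation times of the walk are now governed by the recurrent case $\alpha=d_0$ instead of transience; I condition on $S$ throughout, noting that here the limit does not involve the walk at all, so de-conditioning at the end is automatic. First, grouping the terms of $U_{\lfloor nt\rfloor}$ according to the visited sites and using Item~(i) of Assumption~\ref{HYP} (which kills the diagonal) together with Item~(ii),
\[
U_{\lfloor nt\rfloor}=\sum_{y,y'\in\Z^{d_0}}N_{\lfloor nt\rfloor}(y)\,N_{\lfloor nt\rfloor}(y')\,h(\xi_y,\xi_{y'})=\sum_{\{y,y'\}:\,y\ne y'}w^{(n)}_{y,y'}(t)\,h(\xi_y,\xi_{y'}),
\]
where $w^{(n)}_{y,y'}(t):=2\,N_{\lfloor nt\rfloor}(y)\,N_{\lfloor nt\rfloor}(y')$. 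Fix $0<t_1<\cdots<t_m$ and, conditionally on $S$, introduce on $\R^m\setminus\{0\}$ the point process
\[
\Pi_n:=\sum_{\{y,y'\}:\,y\ne y'}\delta_{\,a_n^{-1}h(\xi_y,\xi_{y'})\,\big(w^{(n)}_{y,y'}(t_1),\,\ldots,\,w^{(n)}_{y,y'}(t_m)\big)},
\]
so that the sum of the atoms of $\Pi_n$ equals $\big(U_{\lfloor nt_k\rfloor}/a_n\big)_{k\le m}$ up to the contribution of atoms close to the origin and, when $\beta\ge1$, a deterministic centering.

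The heart of the argument is to show that, for $\mathbb P$-almost every realization of $S$, $\Pi_n$ converges to a Poisson point process $\Pi_\infty$ whose intensity is precisely the one for which the sum of the atoms is the $\beta$-stable vector with the law of $\big(K_\beta^{\frac 2\beta}Z_{t_k,t_k}\big)_{k\le m}$; by Kallenberg's theorem this reduces to (a) convergence of the intensity measures $\mathbb E[\Pi_n(\cdot)\mid S]$ on Borel sets bounded away from $0$, and (b) convergence of the avoidance probabilities $\mathbb P\big(\Pi_n(B)=0\mid S\big)\to e^{-\Pi_\infty(B)}$. For~(a), Item~(iii) shows that the pair $\{y,y'\}$ carries the $\beta$-stable tail rescaled by $w^{(n)}_{y,y'}(t_k)/a_n$, so that $\mathbb E[\Pi_n(\cdot)\mid S]$ is governed by sums of the type $a_n^{-\beta}\sum_{\{y,y'\}}w^{(n)}_{y,y'}(t)^\beta$ and their multi-time refinements, hence ultimately by $a_n^{-\beta}\big(\sum_y N_{\lfloor nt\rfloor}(y)^\beta\big)^2$; the inputs are \eqref{range} together with the occupation-time asymptotics $\sum_y N_{\lfloor nt\rfloor}(y)^\beta\sim K_\beta\,(nt)(\log n)^{\beta-1}$ a.s., which reflect the asymptotically exponential profile of the occupation times of a critically recurrent walk and are exactly what fix both the normalization $a_n=n^{\frac 2\beta}(\log n)^{2-\frac 2\beta}$ and the constant $K_\beta=\Gamma(\beta+1)/c_3^{\beta-1}$. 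One also checks that the ``diagonal'' correction $a_n^{-\beta}\sum_y N_{\lfloor nt\rfloor}(y)^{2\beta}$ and the error terms built out of $N^*_{\lfloor nt\rfloor}$ are negligible, again by \eqref{range} and the control on the maximal occupation time. For~(b), the essential point --- as in \cite{DDMS} --- is Item~(iv): it guarantees that large values of $h(\xi_y,\xi_{y'})$ come from isolated pairs, not from a single exceptional coordinate $\xi_y$ shared by many pairs, so that the extreme atoms of $\Pi_n$ do not cluster in the limit; the way the pairs $\{y,y'\}$ split according to the time windows in which $y$ and $y'$ are first visited is what produces the diagonal-L\'evy-sheet structure of $\Pi_\infty$ (and, in particular, the independence of increments along the diagonal).

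It then remains to recover the sum from the point process. Truncating at a level $\varepsilon>0$, the part of the sum over atoms of norm $>\varepsilon$ is a continuous functional of $\Pi_n$ and converges, by the Poissonian limit above, to the corresponding truncated sum over the atoms of $\Pi_\infty$; letting $\varepsilon\to0$, the latter converges to the $\beta$-stable vector with the law of $\big(K_\beta^{\frac 2\beta}Z_{t_k,t_k}\big)_{k\le m}$, whose characteristic function is identified through \eqref{FonCar} and \eqref{FonCar1} and the scaling property of $\beta$-stable laws. The last task is to control the atoms of norm $\le\varepsilon$ uniformly in $n$: for $\beta<1$ the truncated sums are directly negligible; for $1<\beta<2$ one centers by the conditional mean and bounds the conditional variance of the small-atom part, using Items~(v) and~(vi) to neutralize the cross terms $\mathbb E\big[\mathbf h_M(\xi_1,\xi_2)\,\mathbf h_{M'}(\xi_1,\xi_3)\big]$ produced by the $U$-statistic structure; and for $\beta=1$, Item~(vii) supplies the symmetry needed so that no divergent centering appears. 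Since the conditioning on $S$ enters only through occupation-time functionals that converge a.s.\ to deterministic constants, the conditional convergence upgrades at once to the announced unconditional one.

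The main obstacle, I expect, is the joint treatment of the two conditional ingredients of Kallenberg's theorem when the weights $w^{(n)}_{y,y'}(t)$ are themselves random and unbounded: on one side, establishing the a.s.\ occupation-time asymptotics $\sum_y N_{\lfloor nt\rfloor}(y)^\beta\sim K_\beta\,(nt)(\log n)^{\beta-1}$ jointly in $t$ with enough uniformity, and the negligibility of the error terms built from $N^*_{\lfloor nt\rfloor}$ and from $\sum_y N_{\lfloor nt\rfloor}(y)^{2\beta}$; on the other side, turning Item~(iv) into the quantitative estimate required for the avoidance probabilities. The second-moment bookkeeping for the small-atom part in the range $\beta\in[4/3,2)$ --- which is precisely why Item~(vi) is imposed --- is the other delicate point.
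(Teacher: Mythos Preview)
Your strategy matches the paper's: condition on $S$, encode the $U$-statistic as a functional of a point process, verify Kallenberg's two conditions using the occupation-time asymptotics of the critically recurrent walk and the anti-clustering Item~(iv), then recover the full sum from the truncated one via the small-atom estimates keyed to Items~(iii)--(vii). The difference is organizational. You build an $\R^m$-valued point process with atoms $a_n^{-1}h(\xi_y,\xi_{y'})\big(w^{(n)}_{y,y'}(t_k)\big)_{k}$ and aim to identify the limiting Poisson intensity on $\R^m\setminus\{0\}$ directly. The paper instead passes to increments $d_{i,n}(x)=N_{\lfloor nt_i\rfloor}(x)-N_{\lfloor nt_{i-1}\rfloor}(x)$, uses the identity
\[
\sum_{j}\theta_j\big(N_{\lfloor nt_j\rfloor}(x)N_{\lfloor nt_j\rfloor}(y)-N_{\lfloor nt_{j-1}\rfloor}(x)N_{\lfloor nt_{j-1}\rfloor}(y)\big)=\sum_{i,j}\theta_{\max(i,j)}\,d_{i,n}(x)d_{j,n}(y),
\]
and applies Kallenberg on $\R^*$ with the scalar weights $\chi_{n,x,y}=\sum_{i,j}\theta_{i,j}d_{i,n}(x)d_{j,n}(y)$. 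The multi-time structure is then captured by a single almost-sure limit (Lemma~\ref{LEM0}):
\[
a_n^{-\beta}\sum_{x,y}\Big|\sum_{i,j}\theta_{i,j}d_{i,n}(x)d_{j,n}(y)\Big|_\pm^\beta \longrightarrow K_\beta^2\sum_{i,j}|\theta_{i,j}|_\pm^\beta\,(t_i-t_{i-1})(t_j-t_{j-1}).
\]
What you call ``multi-time refinements'' of $a_n^{-\beta}\big(\sum_y N_{\lfloor nt\rfloor}(y)^\beta\big)^2$ --- in your setup, the convergence of the weighted angular measure of the direction vectors $v^{(n)}_{y,y'}/|v^{(n)}_{y,y'}|$ --- is, via Cram\'er--Wold, exactly the content of this lemma; the increment parametrization is simply the cleaner way to state and prove it.

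One clarification on the ``main obstacle''. You frame it as joint uniformity in $t$ of $\sum_y N_{\lfloor nt\rfloor}(y)^\beta\sim K_\beta(nt)(\log n)^{\beta-1}$, but the marginal asymptotic alone cannot produce the independent-increment structure of $(Z_{t,t})_t$. What is needed is the \emph{decoupling} you correctly intuit (``pairs split according to the time windows in which $y,y'$ are first visited''): for the critically recurrent walk, most sites visited by time $nt_m$ are visited in only one window, so cross terms among the $d_{i,n}(x)d_{j,n}(y)$ contribute negligibly to the $|\cdot|^\beta$-sums. The paper establishes this first for integer powers --- where it reduces to $\big(\sum_x N_{\lfloor nt_m\rfloor}(x)^k\big)^2-\big(\sum_i\sum_x d_{i,n}(x)^k\big)^2=o\big((n(\log n)^{k-1})^2\big)$ via the self-intersection asymptotics of \v{C}ern\'y --- and then interpolates to non-integer $\beta$ by a method-of-moments trick: conditionally on $S$, draw $(V_n,V_n')$ uniformly among visited sites, show that the conditional moments of $W_n=(c_3/\log n)^2\sum_{i,j}\theta_{i,j}d_{i,n}(V_n)d_{j,n}(V_n')$ converge to those of $\theta_{V,V'}TT'$ with $T,T'$ standard exponential and $V,V'$ independent with $\mathbb P(V=i)=(t_i-t_{i-1})/t_m$, and read off $\mathbb E[|W_n|_\pm^\beta\mid S]$. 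That interpolation, not uniformity, is where $K_\beta=\Gamma(\beta+1)/c_3^{\beta-1}$ is pinned down and where the diagonal-L\'evy-sheet structure emerges.
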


When $\alpha>d_0$ (which implies $d_0=1$), we prove a result of convergence in distribution
in the Skorokhod space for the $J_1$-metric. Recall that $\mathcal {\mathbf h}_M(x,y)=h(x,y)\mathbf 1_{\{|h(x,y)|\le M\}}+\frac{\beta}{\beta-1}(c_0-c_1)M^{1-\beta}$.

\begin{theo}[Recurrent case with local time]\label{THM1}
Assume $\alpha\in(1,2]$, $d_0=1$ and Assumption \ref{HYP}. We set $a_n:=n^{2\delta}$ with $\delta=1-\frac 1\alpha+\frac 1{\alpha\beta}$.
Then, for every $T>0$, $((U_{\lfloor nt\rfloor}/a_n)_{t\in[0,T]})_n$ converges in distribution (in the Skorokhod space $D([0,T])$ endowed with the $J_1$ metric) to
$(\int_{\mathbb R^2}\mathcal L_t(x)\mathcal L_t(y)\, dZ_{x,y})_{t\in[0,T]}$,
where $(\mathcal L_t(x),\ t\ge 0,\ x\in\mathbb R)$ is a jointly continuous version of 
the local time at point $x$ at time $t$ of $(Y_s)_{s\ge 0}$
(such that, for every $t$, $\mathcal L_t$ is compactly supported).
\end{theo}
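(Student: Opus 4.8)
The plan is to combine a point-process limit theorem, in the spirit of \cite{DDMS}, with a tightness estimate for the $J_1$ metric. First I would use the two structural properties $h(x,x)=0$ and the symmetry of $h$ to rewrite
$$U_{\lfloor nt\rfloor}=\sum_{x,y\in\mathbb Z}N_{\lfloor nt\rfloor}(x)N_{\lfloor nt\rfloor}(y)\,h(\xi_x,\xi_y)
=2\sum_{x<y}N_{\lfloor nt\rfloor}(x)N_{\lfloor nt\rfloor}(y)\,h(\xi_x,\xi_y),$$
rescale the space variable by $n^{1/\alpha}$, set $L_{n,t}(x):=n^{-1+1/\alpha}N_{\lfloor nt\rfloor}(\lfloor n^{1/\alpha}x\rfloor)$, and observe that $a_n=n^{2\delta}=n^{2-2/\alpha}\cdot n^{2/(\alpha\beta)}$, so that, up to the usual lattice bookkeeping,
$$\frac{U_{\lfloor nt\rfloor}}{a_n}=2\sum_{x<y}L_{n,t}\!\big(n^{-1/\alpha}x\big)\,L_{n,t}\!\big(n^{-1/\alpha}y\big)\,\frac{h(\xi_x,\xi_y)}{n^{2/(\alpha\beta)}}.$$
The only input from the walk is the joint convergence (classical for $\alpha$-stable walks with $\alpha>d_0=1$, via the joint continuity of the local time) of $\big(n^{-1/\alpha}S_{\lfloor n\cdot\rfloor},\,L_{n,\cdot}(\cdot)\big)$ to $(Y,\mathcal L)$, with $\mathcal L_t$ compactly supported; the whole argument would be run conditionally on the walk, which automatically yields the independence of $Z$ and $Y$ in the limit.

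Conditionally on $S$, the core step is to prove, via Kallenberg's theorem, that the point process
$$\Pi_n:=\sum_{x<y}\delta_{\big(n^{-1/\alpha}x,\ n^{-1/\alpha}y,\ n^{-2/(\alpha\beta)}h(\xi_x,\xi_y)\big)}$$
on $\mathbb R^2\times(\mathbb R\setminus\{0\})$ converges in distribution (over the scenery, for a.e. realisation of $S$) to a Poisson point process $\Pi$ with intensity $\tfrac12\,dx\,dy\otimes\mu_\beta(du)$, where $\mu_\beta(du)=\big(c_0\mathbf 1_{\{u>0\}}+c_1\mathbf 1_{\{u<0\}}\big)\beta|u|^{-\beta-1}\,du$ is the Lévy measure encoded by Assumption \ref{HYP}(iii). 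Two verifications are needed: convergence of the mean measures on sets $A\times\{|u|\ge\varepsilon\}$, which reduces to $\#\{x<y:\,(n^{-1/\alpha}x,n^{-1/\alpha}y)\in A\}\sim\tfrac12|A|n^{2/\alpha}$ together with $n^{2/\alpha}\,\mathbb P(|h(\xi_1,\xi_2)|\ge\varepsilon n^{2/(\alpha\beta)})\to(c_0+c_1)\varepsilon^{-\beta}$ from (iii); and the ``no-clustering'' bounds that upgrade this to a Poisson limit, which is exactly where the $U$-statistic dependence between pairs sharing a common variable $\xi_x$ enters and is controlled by Assumption \ref{HYP}(iv) — and, in the range $\beta\ge4/3$, by (vi) — so that coinciding-index contributions are of lower order.

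Next I would transfer this to the statistic itself. By the construction of the stochastic integral in Appendix \ref{calculsto}, conditionally on $Y$ one has the Poisson representation $\int_{\mathbb R^2}\mathcal L_t(x)\mathcal L_t(y)\,dZ_{x,y}=2\sum_{(p,q,u)\in\Pi}\mathcal L_t(p)\mathcal L_t(q)\,u$, suitably compensated when $\beta\ge1$ (the compensation being finite since $\mathcal L_t$ is compactly supported and $\mathbb E[h(\xi_1,\xi_2)]=0$ by (v), using (vii) when $\beta=1$). Then a truncation step shows that the pairs with $|h(\xi_x,\xi_y)|\le\varepsilon n^{2/(\alpha\beta)}$ contribute negligibly to $U_{\lfloor nt\rfloor}/a_n$ as first $n\to\infty$ and then $\varepsilon\to0$, via a centered second-moment (or $L^p$, $p<\beta$) bound on the weighted truncated sum — again using (iv) and (vi) for the cross-covariances, and the auxiliary kernel $\mathbf h_M$ precisely because for $\beta>1$ truncated means must be tracked; and a continuous-mapping argument for the functional sending a locally finite configuration together with the (compactly supported, uniformly approximable) local-time profile to the weighted sum over its atoms with $|u|>\varepsilon$. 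Combining these with the point-process convergence and the joint local-time convergence yields the convergence of $U_{\lfloor nt\rfloor}/a_n$ to $\int_{\mathbb R^2}\mathcal L_t(x)\mathcal L_t(y)\,dZ_{x,y}$, jointly over any finite set of times $t$, i.e. the finite-dimensional convergence.

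The last step, and the one I expect to be the main obstacle, is tightness in $D([0,T])$ for the $J_1$ metric: one must show $\lim_{\eta\to0}\limsup_n\mathbb P\big(w'_{[0,T]}(U_{\lfloor n\cdot\rfloor}/a_n,\eta)>\varepsilon\big)=0$ for the càdlàg modulus $w'$. On a short window $[s,t]$, I would split $U_{\lfloor nt\rfloor}-U_{\lfloor ns\rfloor}$ into the ``new'' double sum over indices in $(ns,nt]$ and the ``cross'' term $2\sum_{i\le ns<j\le nt}h(\xi_{S_i},\xi_{S_j})$. The new part behaves like the original statistic with $\eta n$ in place of $n$, hence is of order $a_{\eta n}=\eta^{2\delta}a_n$ apart from isolated large atoms of $\Pi$, which do not accumulate thanks to the Poissonian structure. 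The cross term, coupling an ``old'' weighted scenery sum with a ``new'' one, is the delicate piece; it would be controlled by a maximal inequality combined with the decoupling moment estimates of Assumption \ref{HYP}, exploiting $\gamma>3\beta/4$ and $\theta'>3\beta/4-1$. Producing uniform moment bounds for these cross-increments — enough to control both the oscillation and the impossibility of two large jumps in one small interval — is where most of the technical effort would go.
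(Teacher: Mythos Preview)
Your finite-dimensional argument takes a genuinely different route from the paper's. You place the point process on $\mathbb R^2\times\mathbb R^*$, keeping the spatial positions of the pairs explicit and applying the local-time weights afterwards via continuous mapping; the paper instead bakes the weights into a \emph{one-dimensional} point process on $\mathbb R^*$, namely $\mathcal N_n=\sum_{x\ne y}\delta_{a_n^{-1}\zeta_{n,x,y}h(\xi_x,\xi_y)}$ with $\zeta_{n,x,y}=\sum_i\theta_i N_{\lfloor nt_i\rfloor}(x)N_{\lfloor nt_i\rfloor}(y)$. Because these weights are random and converge only in distribution, the paper first invokes the Skorokhod representation theorem for the local-time functionals together with $G_n^\pm=a_n^{-\beta}\sum_{x,y}|\zeta_{n,x,y}|_\pm^\beta$, $R_n$ and $N_n^*$, and then runs Kallenberg's theorem conditionally on the representation; the limiting Poisson intensity is then directly the pair $((c_0+c_1)G^+,(c_0-c_1)G^-)$, so the stable characteristic function $\Phi_{(c_0+c_1)G^+,(c_0-c_1)G^-,\beta}$ falls out at once. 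Your scheme avoids Skorokhod representation by exploiting that $\Pi_n$ depends on $\xi$ alone, so its Poisson limit is unconditional and the walk enters only through the weights. Both routes are sound; yours is more transparent about why the limit is a L\'evy-sheet integral, the paper's is shorter because no continuous-mapping step with random weights is needed. One caveat: your phrase ``conditionally on the walk'' is misleading, since nothing converges almost surely along the original walk; you will need either a Skorokhod coupling of $(L_{n,\cdot})$ or an explicit joint-convergence-plus-continuous-mapping argument to make the transfer rigorous.

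For tightness the paper's argument is both different and considerably simpler than what you sketch. Rather than splitting $U_{\lfloor nt\rfloor}-U_{\lfloor ns\rfloor}$ into a ``new'' double sum and a ``cross'' term, the paper truncates $h$ at level $\rho n^{2/(\alpha\beta)}$ (choosing $\rho$ so that the truncation is felt with probability at most $\varepsilon$ on the range $\{|x|\le An^{1/\alpha}\}$), centers, and proves a direct Kolmogorov bound $\mathbb E[(\bar U^n_t-\bar U^n_s)^2]\le C(t-s)^{2-2/\alpha}$ by expanding the square and controlling the four index-overlap types via $\mathbb E\big[(\sum_xN_n^2(x))^2\big]=O(n^{4-2/\alpha})$ and the covariance estimate $|\cov(\bar h(\xi_1,\xi_2),\bar h(\xi_1,\xi_3))|=O(n^{-3/\alpha+4/(\alpha\beta)})$ (this is exactly where (iv) and (vi) are used). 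Your time-splitting route would work in principle but would replicate most of this computation while additionally requiring a maximal inequality that the paper's moment bound sidesteps.
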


Observe that, in every case, there exists $c>0$ such that
\begin{equation}\label{anequiv}
a_n\sim cn^2(\mathbb E[R_n])^{\frac 2\beta-2}
\end{equation}
(see for example \cite[p. 36]{Spitzer} and \cite[pp. 698-703]{LeGallRosen}).
It is worth noting that $U_n$ can be rewritten as follows
$$U_n=\sum_{x,y\in\mathbb Z^{d_0}}h(\xi_x,\xi_y)N_n(x)N_n(y) .$$

\begin{prop}\label{relax}
The results of convergence of finite dimensional distributions of
Theorems \ref{THM0}, \ref{THM00} and \ref{THM1} hold also if we replace Item (i) of Assumption \ref{HYP}
by the following assumption:\\[0.5mm]
(i') $\mathbb E[\exp(iu h(\xi_1,\xi_1))]-1=O(|u|^{\beta'})$ for some $ \beta'> \beta/2 $.
\end{prop}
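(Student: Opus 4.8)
The plan is to reduce the case with kernel $h$ satisfying (i') to the already-proven case of a kernel vanishing on the diagonal. Write $U_n=\sum_{i,j=1}^n h(\xi_{S_i},\xi_{S_j})$ and split it according to whether $S_i=S_j$ or not:
\begin{equation}\label{relaxsplit}
U_n=\sum_{x,y\in\mathbb Z^{d_0}}h(\xi_x,\xi_y)N_n(x)N_n(y)
=\sum_{x\ne y}h(\xi_x,\xi_y)N_n(x)N_n(y)+\sum_{x}h(\xi_x,\xi_x)N_n(x)^2 .
\end{equation}
The first sum is exactly $\tilde U_n:=\sum_{i,j=1}^n\tilde h(\xi_{S_i},\xi_{S_j})$ for the modified kernel $\tilde h(x,y):=h(x,y)\mathbf 1_{\{x\ne y\}}$ — but this does not literally make sense on a general measurable space, so instead I would argue directly: the kernel $h$ and the kernel $\hat h$ defined by $\hat h(x,y)=h(x,y)$ for $x\ne y$ and $\hat h(x,x)=0$ produce $U$-statistics differing precisely by the diagonal term $D_n:=\sum_x h(\xi_x,\xi_x)N_n(x)^2$. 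Note $\hat h$ inherits Items (ii)--(vii) of Assumption \ref{HYP} from $h$: indeed $\mathbb P(\hat h(\xi_1,\xi_2)\ge z)=\mathbb P(h(\xi_1,\xi_2)\ge z,\ \xi_1\ne\xi_2)$ and, since under (i') the atom $\{\xi_1=\xi_2\}$ contributes a bounded perturbation, the tail estimates \eqref{hyph1}, \eqref{hyph1b} and the joint-tail bounds \eqref{hyph2}, (vi) are unchanged in the limit (the contribution of $\{\xi_1=\xi_2\}$ is $O(z^{-\gamma'})$ for any $\gamma'$ by (i'), hence negligible against $z^{-\beta}$; one checks (v), (vii) similarly). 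Thus Theorems \ref{THM0}, \ref{THM00}, \ref{THM1} apply to $\hat U_n:=\sum_{i,j}\hat h(\xi_{S_i},\xi_{S_j})$ and give the claimed finite-dimensional limits. It therefore remains to show that the diagonal term is negligible:
\begin{equation}\label{relaxnegl}
D_n/a_n=\Big(\sum_{x\in\mathbb Z^{d_0}} h(\xi_x,\xi_x)N_n(x)^2\Big)\big/a_n \xrightarrow[n\to\infty]{} 0\quad\text{in probability.}
\end{equation}

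To prove \eqref{relaxnegl} I would estimate the characteristic function of $D_n$ conditionally on the walk $S$. Since the $\xi_x$ are iid and independent of $S$, conditionally on $(N_n(x))_x$ the variables $h(\xi_x,\xi_x)N_n(x)^2$ are independent, so
\[
\mathbb E\big[e^{iu D_n/a_n}\,\big|\,S\big]=\prod_{x:\,N_n(x)>0}\mathbb E\big[\exp\big(iu\,h(\xi_1,\xi_1)N_n(x)^2/a_n\big)\big].
\]
By (i'), $\big|\mathbb E[\exp(iv\,h(\xi_1,\xi_1))]-1\big|\le C|v|^{\beta'}$, whence each factor is $1+O\big((|u|N_n(x)^2/a_n)^{\beta'}\big)$ and
\[
\big|\mathbb E[e^{iuD_n/a_n}\mid S]-1\big|\le C|u|^{\beta'}a_n^{-\beta'}\sum_{x}N_n(x)^{2\beta'}
\le C|u|^{\beta'}a_n^{-\beta'}(N_n^*)^{2\beta'-1}\sum_x N_n(x)=C|u|^{\beta'}a_n^{-\beta'}(N_n^*)^{2\beta'-1}n,
\]
using $2\beta'>1$ (which follows from $\beta'>\beta/2$ if $\beta\ge1$; if $\beta<1$ one may replace the exponent $\beta'$ by $\min(\beta',1)$, still $>\beta/2$ suffices after noting $2\beta'$ can be taken in $[1,2]$ — I will choose $\beta'\in(\beta/2,1]$ without loss of generality). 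It thus suffices to show $a_n^{-\beta'}n\,(N_n^*)^{2\beta'-1}\to0$ in probability, i.e. $N_n^*=o\big((a_n/n)^{1/(2\beta'-1)}\big)$.

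The final step is the bound on the maximal local time $N_n^*$. In all three regimes one has by \eqref{anequiv} that $a_n/n\sim c\,n\,(\mathbb E[R_n])^{2/\beta-2}$, and the standard estimates for $N_n^*$ (transient: $N_n^*=O_{\mathbb P}(\log n)$; $\alpha=d_0$: $N_n^*=O_{\mathbb P}(\log n\cdot\log\log n)$ or a suitable polynomial-in-$\log$ bound; $\alpha>d_0=1$: $N_n^*=O_{\mathbb P}(n^{1-1/\alpha}\log n)$, cf. the references already cited after \eqref{range} and \cite{LeGallRosen}) make the required $o$-statement a routine comparison of powers of $n$ and $\log n$ — in the transient and $\alpha=d_0$ cases $a_n/n$ grows like a power of $n$ while $N_n^*$ is polylogarithmic, and in the $\alpha>d_0$ case one checks $\frac{1-1/\alpha}{}\,(2\beta'-1)<2\delta-1=1-2/\alpha+2/(\alpha\beta)$, which holds because $\beta'$ may be chosen close enough to $\beta/2$. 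This comparison, together with choosing $\beta'$ appropriately in $(\beta/2,1]$, is the only point requiring care; I expect it to be the main (though still routine) obstacle. Once \eqref{relaxnegl} is established, dominated convergence applied to $\mathbb E[e^{iuD_n/a_n}\mid S]$ gives $D_n/a_n\to0$ in probability, and combining with the limit for $\hat U_n/a_n$ via \eqref{relaxsplit} and Slutsky's lemma yields the finite-dimensional convergence of $U_n/a_n$ to the same limits as in Theorems \ref{THM0}, \ref{THM00} and \ref{THM1}. \qed
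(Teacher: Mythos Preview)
Your overall strategy matches the paper's exactly: split $U_n$ into the off-diagonal and diagonal parts, invoke the already-proved theorems for the former, and kill $D_n/a_n$ via the conditional characteristic function and assumption (i'). Two points, however, are not clean.

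First, the detour through the modified kernel $\hat h$ is both unnecessary and unjustified as written. Assumption (i') is a bound on the characteristic function of $h(\xi_1,\xi_1)$; it does \emph{not} imply any tail decay, so your claim that ``the contribution of $\{\xi_1=\xi_2\}$ is $O(z^{-\gamma'})$ for any $\gamma'$ by (i')'' is false (e.g.\ $h(\xi_1,\xi_1)$ could itself be $\beta'$-stable). The paper bypasses this entirely: the proofs of Theorems~\ref{THM0}, \ref{THM00}, \ref{THM1} in Section~\ref{finitedistrib} work throughout with the off-diagonal sum $\sum_{x\ne y}(\cdots)$ and use only Items (ii)--(vii), so the convergence of the off-diagonal part holds for the original $h$ directly, with no need to verify Assumption~\ref{HYP} for any modified kernel.

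Second, your inequality $\sum_x N_n(x)^{2\beta'}\le (N_n^*)^{2\beta'-1}n$ needs $2\beta'\ge 1$, and your fix (``choose $\beta'\in(\beta/2,1]$'') does not help when $\beta<1$ and the given $\beta'\in(\beta/2,1/2)$. In the recurrent case $\alpha>d_0=1$ with $\beta<1$ this gap is real: the crude alternative $\sum_x N_n(x)^{2\beta'}\le n$ gives the requirement $2\delta\beta'>1$, which fails for $\beta'$ near $\beta/2$. The paper instead uses the trivial bound $\sum_x N_n(x)^{2\beta'}\le R_n(N_n^*)^{2\beta'}$ together with the almost-sure estimates $R_n\le n^{1/\alpha_0+\varepsilon}$, $N_n^*\le n^{1-1/\alpha_0+\varepsilon}$, $a_n^{-1}\le n^{-2+2/\alpha_0-2/(\alpha_0\beta)+\varepsilon}$; the resulting exponent is $(1/\alpha_0)(1-2\beta'/\beta)+O(\varepsilon)<0$ for \emph{every} $\beta'>\beta/2$, with no case distinction. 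Incidentally, your remark that the exponent inequality ``holds because $\beta'$ may be chosen close enough to $\beta/2$'' is backwards: once you use the correct bound, it holds for all $\beta'>\beta/2$.
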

Observe that (i') includes (i) and the case when 
$h(\xi_1,\xi_1)$ is in the normal domain of attraction of a $\beta'$-stable distribution for some $\beta'>\beta/2$, in particular this applies if $h(\xi_1,\xi_1)$ has the same distribution as $h(\xi_1,\xi_2)$.
\begin{proof}
Due to Theorems \ref{THM0}, \ref{THM00} and \ref{THM1}, we know that the finite dimensional distributions of
$$\left(\left(\sum_{x\ne y} h(\xi_x,\xi_y)N_{\lfloor nt\rfloor}(x)N_{\lfloor nt\rfloor}(y)/a_n\right)_{t>0}\right)_n$$ converge.
It remains to prove that $(\sum_x h(\xi_x,\xi_x)N_{\lfloor nt\rfloor}^2(x)/a_n)_n$ converges in probability to 0 (for every $t>0$). We write $\varphi_{h(\xi_1,\xi_1)}$ for
the characteristic function of $h(\xi_1,\xi_1)$.
Let $t>0$ and $u$ be two real numbers. We have
\begin{eqnarray*}
\mathbb E\left[\exp\left(iu\sum_{x\in\mathbb Z^{d_0}} \frac{h(\xi_x,\xi_x)N_{\lfloor nt\rfloor}^2(x)}{a_n}
\right)\right]&=&
\mathbb E\left[\prod_{x\in\mathbb Z^{d_0}}\varphi_{h(\xi_1,\xi_1)}\left(\frac{uN_{\lfloor nt\rfloor}^2(x)}{a_n}
\right)\right].
\end{eqnarray*}
To conclude we just have to prove that 
$\left(\prod_{x\in\mathbb Z^{d_0}}\varphi_{h(\xi_1,\xi_1)}\left(\frac{uN_{\lfloor nt\rfloor}^2(x)}{a_n}\right)\right)_n$ converges almost
surely to 1. Due to (i'), there exists $C_2>0$ such that we have
\begin{eqnarray*}
\left|\prod_{x\in\mathbb Z^{d_0}}\varphi_{h(\xi_1,\xi_1)}\left(\frac{uN_{\lfloor nt\rfloor}^2(x)}{a_n}\right)-1\right|
&\le& C_2\sum_{x\in\mathbb Z^d}\frac{|u|^{\beta'}N_{\lfloor nT\rfloor}^{2\beta'}(x)}{a_n^{\beta'}}
\end{eqnarray*}
which converges almost surely to 0 since, 
for every $\varepsilon>0$, the following
inequalities hold almost surely, for $n$ large enough
$$R_n\le n^{\frac 1{\alpha_0}+\varepsilon},\ \ 
  N_n^*\le n^{1-\frac 1{\alpha_0}+\varepsilon}\ \ \mbox{and}\ \ 
  a_n^{-1}\le n^{-2+\frac 2{\alpha_0}-\frac 2{\alpha_0\beta}
    +\varepsilon}$$
(see for example \cite{Spitzer,jainpruit83,BFFN}).
\end{proof}

%
%
%
%

\section{Examples}\label{examples}

The following examples are variants of Example 2.4 from \cite{DDMS}.
Observe that
$$\mathbb P(h(\xi_1,\xi_2)>z)=\int_E\mathbb P(h(x,\xi_2)>z)\, d\mathbb P_{\xi_1}(x)$$
and that
$$ \mathbb P(|h(\xi_1,\xi_2)|>z,|h(\xi_1,\xi_3)|>z')=
\int_E\mathbb P(|h(x,\xi_2)|>z)\mathbb P(|h(x,\xi_2)|>z')\, d\mathbb P_{\xi_1}(x).$$

\begin{itemize}
\item When $\beta<1$, one can take $E=\mathbb R^p$, the distribution of $\xi_1$
admitting a bounded density $f$ with respect to the Lebesgue measure on $E$
and $h(x,y)=\Vert x-y\Vert_\infty^{-p/\beta}{\mathbf 1}_{\{x\ne y\}}$.
This example fits Assumption \ref{HYP}.
Indeed, for every $z>0$, $\mathbb P(h(\xi_1,\xi_2)<-z)=0$ and
$$\mathbb P(h(x,\xi_2)>z)=\mathbb P(\Vert x-\xi_2\Vert_\infty<z^{-\frac\beta p})
    \sim_{z\rightarrow+\infty}2^pf(x)z^{-\beta}
\ \ \mbox{and}\ \ \mathbb P(h(x,\xi_2)>z)\le\Vert f\Vert_\infty 2^p z^{-\beta}.$$
So
$$\mathbb P(h(\xi_1,\xi_2)>z)\sim_{z\rightarrow+\infty}2^pz^{-\beta}
    \int_{\mathbb R^d}(f(x))^2\, dx $$
and
$$\mathbb P(|h(\xi_1,\xi_2)|>z,|h(\xi_1,\xi_3)|>z')\le (1+\Vert f\Vert_\infty 2^p)^2(\max(1,z)\max(1,z'))^{-\beta}.$$
\item Analogously, when $\beta\ge 1$, we can take
$E=\{\pm 1\}\times\mathbb R^p$, $h((\varepsilon,x),(\varepsilon',y))=\varepsilon\varepsilon'\Vert x-y\Vert_\infty^{-p/\beta}{\mathbf 1}_{\{x\ne y\}}$
and $\xi_1=(\varepsilon_1,\vec\xi_1)$ with $\varepsilon_1$ and $\vec\xi_1$ independent; $\varepsilon_1$
being centered and the distribution of $\vec\xi_1$ admitting a bounded
density $f$ with respect to the Lebesgue measure on $\mathbb R^p$.
Using the same argument
as for the previous example together with Remark \ref{RQE0}
we can verify that this example
satisfies Assumption \ref{HYP}.
\end{itemize}
Note that the case $ \beta=1 $ contains the more concrete kernel  $ h(x,y)=1/(x+y)$ for $x\ne y$ in association with some random variable 
$  \xi_1 $ having a bounded symmetric density on $ \mathbb{R} $.

\section{Convergence of finite distributions}\label{finitedistrib}

To simplify notations and the presentation of the proofs, we set
\begin{equation}
|z|_+^\beta:=|z|^\beta\ \ \mbox{and}\ \ |z|_-^\beta:=|z|^\beta\sgn(z)
\end{equation}
for any real number $z$.
Let $m\ge 1$ and $\theta_1,...,\theta_m\in\mathbb R$ and $0=t_0<t_1<...<t_m$. 

If $\alpha_0>1$, we will prove the convergence in distribution of the sequence of random variables
\begin{equation}\label{Folge1}
\left(a_n^{-1} \sum_{x,y\in\mathbb Z^{d_0}}\Big(\sum_{i=1}^m\theta_iN_{\lfloor nt_i\rfloor}(x)N_{\lfloor nt_i\rfloor}(y))h(\xi_x,\xi_y)\Big)\right)_{n\in\N}.
\end{equation}


If $\alpha_0= 1$, since the limit process will have independent increments, it will be more natural to prove the convergence in distribution of the sequence 
$$\left(a_n^{-1} \sum_{x,y\in\mathbb Z^{d_0}}\left(\sum_{i=1}^m\theta_i\Big(N_{\lfloor nt_i\rfloor}(x)N_{\lfloor nt_i\rfloor}(y)-N_{\lfloor nt_{i-1}\rfloor}(x)N_{\lfloor nt_{i-1}\rfloor}(y)\Big)\right)h(\xi_x,\xi_y)\right)_{n\in\N} .$$
Setting $d_{i,n}(x):=N_{\lfloor nt_i\rfloor}(x)-N_{\lfloor nt_{i-1}\rfloor}(x)$,
we observe that
\begin{equation}\label{F1}
\sum_{i=1}^m\theta_i\Big(N_{\lfloor nt_i\rfloor}(x)N_{\lfloor nt_i\rfloor}(y)-N_{\lfloor nt_{i-1}\rfloor}(x)N_{\lfloor nt_{i-1}\rfloor}(y)\Big)
=\sum_{i,j=1}^m \theta_{\max(i,j)}d_{i,n}(x)d_{j,n}(y)
\end{equation}
and hence, if $\alpha_0= 1$, it is sufficient  to  study  for fixed $\theta_{i,j}$ the sequence of random variables
\begin{equation} \label{Folge2}
 \left(a_n^{-1}\sum_{x,y\in\mathbb Z^d}\sum_{i,j=1}^m \theta_{i,j}d_{i,n}(x)d_{j,n}(y)h(\xi_x,\xi_y)\right)_{n\in\N} 
\end{equation}
(in view of applying the results to the particular case when $\theta_{i,j}=\theta_{\max(i,j)}$).


Therefore we have to prove the convergence in distribution of
$(a_n^{-1}\sum_{x,y\in\mathbb Z^{d_0}}\chi_{n,x,y}h(\xi_x,\xi_y))_n$,
with
$$\chi_{n,x,y}:= \sum_{i=1}^m\theta_iN_{\lfloor nt_i\rfloor}(x)N_{\lfloor nt_i\rfloor}(y)\ \ \mbox{if}\ \ \alpha_0>1$$
and
$$\chi_{n,x,y}:= \sum_{i,j=1}^m \theta_{i,j}d_{i,n}(x)d_{j,n}(y)\ \ \mbox{if}\ \ \alpha_0= 1.$$
The basic idea  is to identify the sequences in (\ref{Folge1}) and (\ref{Folge2}) as functionals of some sequence of suitably defined 
point processes and then to use Kallenberg theorem to prove convergence in law of those point processes. More precisely we will define in section
\ref{STEP2} the sequence of point processes on $ \mathbb R^*=\mathbb R\setminus\{0\} $ defined through
$$ {\mathcal N}_n(\tilde\omega,\xi):=\sum_{x,y\in \mathbb Z^{d_0}}\delta_{a_n^{-1}\zeta_{n,x,y}(\tilde\omega)h(\xi_{x},\xi_{y})} ,$$
where  $(\zeta_{n,x,y})_{n,x,y}$ are suitable random variables defined on some suitable probability space $(\tilde\Omega,\tilde{\mathcal F},\tilde{\mathbb P})$ such that, for every integer
$n$, the random variable $\sum_{x,y\in\mathbb Z^{d_0}}\zeta_{n,x,y}h(\xi_x,\xi_y)$ (with respect to $\mathbb P_{\xi}\otimes\tilde{\mathbb P}$) 
has the same law as $\sum_{x,y\in\mathbb Z^{d_0}}\chi_{n,x,y}h(\xi_x,\xi_y)$ (with respect to the original probability measure 
$\mathbb P$).

In section \ref{STEP1} we prove that the probability space $(\tilde\Omega,\tilde{\mathcal F},\tilde{\mathbb P})$ and the 
family $(\zeta_{n,x,y})_{n,x,y}$ can be chosen in such a way to satisfy
\begin{equation}\label{Konvergenz} \lim_{n\rightarrow +\infty}a_n^{-\beta}\sum_{x,y\in\mathbb Z^{d_0}}|\chi_{n,x,y}|_{\pm}^\beta=\tilde G^\pm\ \ \ \mbox{a.s.} ,\end{equation}
where $ \tilde{G} $ is a suitable random variable on $(\tilde\Omega,\tilde{\mathcal F},\tilde{\mathbb P})$. The construction will vary depending on whether
$ \alpha_0= 1 $ or $ \alpha_0>1 $.

The almost sure convergence in (\ref{Konvergenz}) will enable us to use Kallenberg theorem in section \ref{STEP2} to prove that for almost every 
$ \tilde\omega\in\tilde\Omega $ the sequence of 
point processes $ ({\mathcal N}_n(\tilde\omega,.))_{n\in\mathbb{N}} $ converges in law (with respect to $ \mathbb{P}_\xi $) toward a Poisson point process 
$ {\mathcal N}_{\tilde{\omega}} $ on $ \mathbb{R}^* $ with the following intensity function
$$z\mapsto \beta |z|^{-\beta-1}\frac{(c_0+c_1)\tilde G^+(\tilde\omega)+\sgn(z)(c_0-c_1)\tilde G^-(\tilde\omega)}2.$$

In section \ref{STEP3} we will see that $a_n^{-1}\sum_{x,y\in\mathbb Z^{d_0}}\zeta_{n,x,y}(\tilde\omega) h(\xi_x,\xi_y) $ equals $ 
\int_{\mathbb R^*}w\, \mathcal N_n(\tilde\omega,\xi,dw)$ which as $ n $ goes to infinity converges in distribution  toward  $ \int_{\mathbb R^*}w\, \mathcal N_{\tilde\omega}(dw)$.
We will also see in section \ref{STEP3} that this limit follows a stable law with characteristic function 
$\Phi_{(c_0+c_1)\tilde G^+(\tilde\omega),(c_0-c_1)\tilde G^-(\tilde\omega),\beta}$.
This will imply the convergence in distribution of the sequences in (\ref{Folge1}) and (\ref{Folge2})
toward the same stable limit.



%

%
%
\subsection{A result of convergence}\label{STEP1}

\subsubsection{Case $\alpha_0= 1$}

We define 
\begin{equation}
G_n^\pm:=a_n^{-\beta} \sum_{x,y\in\mathbb Z^{d_0}}
\left|\sum_{i,j=1}^m\theta_{i,j}d_{i,n}(x)d_{j,n}(y)\right|_\pm^\beta\ \ 
\mbox{and}\ \ 
G^\pm:= K_\beta^2\sum_{i,j=1}^m|\theta_{i,j}|_\pm^\beta(t_i-t_{i-1})(t_j-t_{j-1}),
\end{equation}
where $K_\beta$ is the constant defined in Theorems \ref{THM0} or \ref{THM00}
(depending on whether the random walk $(S_n)_n$ is transient or recurrent with $\alpha=d_0$).

\begin{lem}\label{LEM0}
If $\alpha_0= 1$, $(G_n^\pm)_n$ converges almost surely to $G^\pm$.
\end{lem}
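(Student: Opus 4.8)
The plan is to reduce the convergence of $G_n^\pm$ to known asymptotics for the self-intersection local time of the random walk, exploiting that $\alpha_0=1$ means the walk is either transient or recurrent with $\alpha=d_0$. The key observation is that, expanding the power and recalling $|z|_\pm^\beta$ denotes $|z|^\beta$ or $|z|^\beta\sgn(z)$, the quantity $\sum_{x,y}|\sum_{i,j}\theta_{i,j}d_{i,n}(x)d_{j,n}(y)|_\pm^\beta$ is a bilinear-type expression in the occupation-time increments $d_{i,n}(\cdot)$. The first step is therefore to show that the ``cross terms'' are negligible: the increments $d_{i,n}(x)=N_{\lfloor nt_i\rfloor}(x)-N_{\lfloor nt_{i-1}\rfloor}(x)$ corresponding to disjoint time intervals are supported, up to a vanishing fraction of the mass, on disjoint (random) sets of sites $x$, because in the regime $\alpha_0=1$ the range $R_n$ grows almost linearly (like $n$ when transient, like $n/\log n$ when $\alpha=d_0$) while each site is visited only $O(n^\varepsilon)$, resp. $O(\log n)$, times, so the bulk of $\sum_x d_{i,n}(x)d_{j,n}(x)$-type overlaps is of lower order than $a_n^\beta$. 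After discarding these, $G_n^\pm$ splits asymptotically into a sum over $(i,j)$ of $|\theta_{i,j}|_\pm^\beta\, a_n^{-\beta}\sum_{x,y} d_{i,n}(x)^\beta\, \overline{d_{j,n}(y)^\beta}$ (with the sign convention absorbed appropriately), and the two coordinates $x$ and $y$ decouple completely since $d_{i,n}(x)$ and $d_{j,n}(y)$ involve the same walk but the double sum factorizes once $h$ has been removed.

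The second step is then to identify, for each fixed $i$, the limit of $a_n^{-\beta/2}\sum_x d_{i,n}(x)^\beta$. By stationarity of increments of the walk (and the translation assumption $\mathbb P(S_n=x)>0$), $\sum_x d_{i,n}(x)^\beta$ has the same law as $\sum_x N_{\lfloor nt_i\rfloor - \lfloor nt_{i-1}\rfloor}(x)^\beta = \sum_x N_m(x)^\beta$ evaluated at time $m\approx n(t_i-t_{i-1})$; this is exactly the $\beta$-th moment self-intersection local time, for which the classical results of Spitzer (transient case) and Le Gall--Rosen / Cerný (case $\alpha=d_0$) give $\sum_x N_m(x)^\beta \sim K_\beta\, (\mathbb E[R_m])^{\,?}\cdot m^{\,?}$; matching exponents, one gets $a_n^{-\beta/2}\sum_x d_{i,n}(x)^\beta \to K_\beta (t_i-t_{i-1})$ almost surely (using a Borel--Cantelli / subsequence argument plus monotonicity in $n$ to upgrade convergence in probability to a.s.\ convergence along the full sequence), where $K_\beta=\mathbb E[N_\infty^{\beta-1}]$ in the transient case and $K_\beta=\Gamma(\beta+1)/c_3^{\beta-1}$ when $\alpha=d_0$, precisely as in Theorems \ref{THM0} and \ref{THM00}. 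Combining this with Step 1 and taking the product over the two decoupled coordinates yields
$$
G_n^\pm \;\longrightarrow\; K_\beta^2\sum_{i,j=1}^m |\theta_{i,j}|_\pm^\beta (t_i-t_{i-1})(t_j-t_{j-1}) \;=\; G^\pm \qquad \text{a.s.}
$$

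I expect the main obstacle to be Step 1, the negligibility of the overlap terms, together with the passage from convergence in probability to almost sure convergence. For the overlap: one must bound $a_n^{-\beta}\sum_x d_{i,n}(x)^a d_{i',n}(x)^{b}$ for $i\ne i'$ and exponents summing to $\beta$, and this is genuinely a statement that two independent pieces of the walk rarely co-occupy the same site — it requires a careful estimate of $\mathbb E[N_p(x)^a N_q(x)^b]$ summed over $x$, using the a.s.\ bounds $R_n \le n^{1/\alpha_0 + \varepsilon}$ and $N_n^* \le n^{1-1/\alpha_0+\varepsilon}$ already invoked in the proof of Proposition \ref{relax}. For the a.s.\ upgrade: the self-intersection local time results are typically stated as convergence in probability or in $L^1$, so one runs them along a sparse deterministic subsequence $n_k$ (e.g.\ $n_k = \lfloor \rho^k\rfloor$), applies Borel--Cantelli using moment bounds, and then interpolates between consecutive $n_k$ via the monotonicity of $N_n(x)$ in $n$, letting $\rho\downarrow 1$ at the end. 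The power-$\beta$ (with $\beta$ possibly less than $1$) forces a little care with the sub- versus super-additivity of $z\mapsto z^\beta$, but this is routine once the skeleton is in place.
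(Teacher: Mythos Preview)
Your Step~1 has a genuine gap in the recurrent case $\alpha=d_0$. There the range satisfies $R_n\sim c_3 n/\log n$, so a \emph{typical} site in the range is visited $\Theta(\log n)$ times, and these visits are spread across all $m$ time intervals; for most $x$ one has $d_{i,n}(x)>0$ simultaneously for several $i$. The ``disjoint support'' picture is therefore false, and the replacement of $\bigl|\sum_{i,j}\theta_{i,j}d_{i,n}(x)d_{j,n}(y)\bigr|_\pm^\beta$ by $\sum_{i,j}|\theta_{i,j}|_\pm^\beta\, d_{i,n}(x)^\beta d_{j,n}(y)^\beta$ incurs an error of the \emph{same} order $a_n^\beta$, not a lower one. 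The crude bounds $R_n\le n^{1/\alpha_0+\varepsilon}$, $N_n^*\le n^{1-1/\alpha_0+\varepsilon}$ you invoke become, for $\alpha_0=1$, the trivial $R_n\le n^{1+\varepsilon}$ and $N_n^*\le n^\varepsilon$, which are far too weak to rescue this.

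The paper takes a different route that avoids this obstacle entirely. It first establishes the analogue of your decoupling \emph{only for integer powers} $k$: expanding $\bigl(\sum_{i,j}\theta_{i,j}d_{i,n}(x)d_{j,n}(y)\bigr)^k$, the off-diagonal multinomial terms are bounded by the nonnegative quantity $\bigl(\sum_x N_{\lfloor nt_m\rfloor}(x)^k\bigr)^2-\bigl(\sum_i\sum_x d_{i,n}(x)^k\bigr)^2$. Both squares have the \emph{same} almost-sure limit $K_k^2 t_m^2 (b_{n,k})^2$ by the single-interval results you cite, so the difference is $o((b_{n,k})^2)$. This is a soft matching-of-limits argument, not a direct overlap estimate.

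Your Step~2 also hides the real work: Kesten--Spitzer and \v Cern\'y give the asymptotics of $\sum_x N_m(x)^k$ for \emph{integer} $k$, not for general $\beta$. The paper bridges this gap by introducing the auxiliary variable $W_n=\text{(const)}\cdot\sum_{i,j}\theta_{i,j}\,d_{i,n}(V_n)d_{j,n}(V'_n)$ with $(V_n,V'_n)$ uniform on the range conditionally on $S$, noting that $\mathbb E[|W_n|_\pm^\beta\mid S]$ is, up to known factors, exactly $G_n^\pm$. The integer-moment result above gives $\mathbb E[W_n^k\mid S]\to\mathbb E[W_\infty^k]$ a.s.\ for all $k$, where $W_\infty=\theta_{V,V'}TT'$ with $T,T'$ independent exponentials (recurrent case) or with the two-sided visit-count law (transient case); the method of moments then yields $(W_n\mid S)\Rightarrow W_\infty$ and hence $\mathbb E[|W_n|_\pm^\beta\mid S]\to\mathbb E[|W_\infty|_\pm^\beta]$ almost surely, which is the claimed limit. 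No Borel--Cantelli/subsequence interpolation is needed, since the integer-moment inputs are already a.s.\ statements.
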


Applying this lemma with $\theta_{i,j}=\theta_{\max(i,j)}$, we directly obtain the following almost sure equality
\begin{equation} \label{blaa}
\lim_{n\rightarrow\infty}a_n^{-\beta}\sum_{x,y\in\mathbb Z^{d_0}}\left|\sum_{i=1}^m\theta_i\Big(N_{\lfloor nt_i\rfloor}(x)N_{\lfloor nt_i\rfloor}(y)-N_{\lfloor nt_{i-1}\rfloor}(x)N_{\lfloor nt_{i-1}\rfloor}(y)\Big)\right|_\pm^\beta=K_\beta^2\sum_{j=1}^m|\theta_j|_\pm^\beta(t_j^2-t_{j-1}^2) .
\end{equation}

\begin{proof}[Proof of Lemma \ref{LEM0}]
We proceed as in \cite{Cerny,cast}.
\begin{itemize}
\item Let $k$ be a nonnegative integer. Let us prove that
\begin{equation}\label{cvgce}
\lim_{n\rightarrow +\infty}
(b_{n,k})^{-2}\sum_{x,y\in\mathbb Z^{d_0}}\left(\sum_{i,j= 1}^m \theta_{i,j}d_{i,n}(x)d_{j,n}
(y)\right)^k=(K_k)^2\sum_{i,j=1}^m(\theta_{i,j})^k(t_i-t_{i-1})(t_j-t_{j-1})\ a.s.,
\end{equation}
with $b_{n,k}:=n(\log n)^{k-1}$ if $(S_n)_n$ is recurrent (and $\alpha=d_0$) and with
$b_{n,k}:=n$ if $(S_n)_n$ is transient (extending
the definition of $K_\beta$ given in Theorems \ref{THM0}
or \ref{THM00} to any nonnegative real number $\beta$).
Due to \cite[p. 10]{kest} (transient case) and to \cite{Cerny} (null recurrent case), we know that
\begin{equation}\label{EQ}
\forall i\in\{1,...,m\},\ \ \lim_{n\rightarrow\infty}
  (b_{n,k})^{-1}\sum_{x\in\mathbb Z^{d_0}}(d_{i,n}(x))^k=K_k(t_i-t_{i-1})\ \ a.s..
\end{equation}
Following some argument from \cite{cast}, we observe that
$$\left|\sum_{x,y\in\mathbb Z^{d_0}}\left(\sum_{i,j=1}^m\theta_{i,j}d_{i,n}(x)d_{j,n}(y)\right)^k -\sum_{x,y\in\mathbb Z^{d_0}}\sum_{i,j=1}^m(\theta_{i,j})^k
   (d_{i,n}(x)d_{j,n}(y))^k\right|$$
\begin{eqnarray*}
&\le& \max_{i,j}|\theta_{i,j}|^k\sum_{((i_1,j_1),...,(i_k,j_k))\in\mathcal I}
\sum_{x,y\in\mathbb Z^{d_0}}\prod_{\ell=1}^kd_{i_\ell,n}(x)d_{j_\ell,n}(y)\\
&\le&  \max_{i,j}|\theta_{i,j}|^k\left(\sum_{x,y\in\mathbb Z^{d_0}}\left(\sum_{i,j=1}^md_{i,n}(x)d_{j,n}(y)\right)^k -\sum_{x,y\in\mathbb Z^{d_0}}\sum_{i,j=1}^m
   (d_{i,n}(x)d_{j,n}(y))^k\right)\\
&\le&\max_{i,j}|\theta_{i,j}|^k
    \left(\left(\sum_{x\in\mathbb Z^{d_0}}(N_{\lfloor nt_m\rfloor}(x))
^k\right)^2-\left(\sum_{i=1}^m\sum_{x\in\mathbb Z^{d_0}}(d_{i,n}(x))^k\right)^2\right),
\end{eqnarray*}
where $\mathcal I$ denotes the set of $((i_1,j_1),...,(i_k,j_k))\in
(\{1,...,m\}^2)^k$ such that $\#\{(i_1,j_1),...,(i_k,j_k)\}\ge 2$.
Due to (\ref{EQ}), we conclude that
this term is in $o((b_{n,k})^2)$. 
\item Assume here that $(S_n)_n$ is recurrent and $\alpha=
d_0$. Let us define
$$W_n:=\frac {(c_3)^2}{\log^2 n}\sum_{i,j=1}^m\theta_{i,j}d_{i,n}(V_n)d_{j,n}(V_n'), $$
with $(V_n,V_n')$ such that the conditional distribution of $(V_n,V_n')$ given $S$
is the uniform distribution on the set $\{z:N_{\lfloor nt_m\rfloor}(z)\ge 1\}^2$. We observe that
\begin{equation}\label{EWn1}
{\mathbb E}[|W_n|_\pm^u|S]=\frac{c_3^{2u}}{\log^{2u} n}
    \frac 1{R_{\lfloor nt_m\rfloor}^2}\sum_{x,y\in\mathbb Z^{d_0}}
    \left|\sum_{i,j=1}^m\theta_{i,j}d_{i,n}(x)d_{j,n}(y)\right|_\pm^u
\end{equation}
for all $ u>0 $.
Recall that $R_{\lfloor nt_m\rfloor}$ is the cardinal of $\{z:N_{\lfloor nt_m\rfloor}(z)\ge 1\}$ and that $R_n\sim c_3 n/\log n$ a.s..
Due to (\ref{cvgce}) and since $K_k=\Gamma(k+1)/c_3^{k-1}$, we conclude that, for every non negative integer $k$, we have, almost surely,
$$ \lim_{n\rightarrow +\infty}{\mathbb E}[(W_n)^k|S]
    =  (\Gamma(k+1))^2
\sum_{i,j=1}^m(\theta_{i,j})^k\frac{t_i-t_{i-1}}{t_m}\frac{t_j-t_{j-1}}{t_m}
=\mathbb E[W_\infty^k],$$
with $W_\infty=\theta_{V,V'}TT'$ where $ V',V,T,T'$ are independent random variables, $T$ and $T'$ having exponential distribution of parameter 1,
$V$ and $V'$ being such that
$\mathbb P(V=i)=\mathbb P(V'=i)=\frac {t_i-t_{i-1}}{t_m}$
for every $i\in\{1,\dots,m\}$.
From which we conclude that, almost surely, $(W_n|S)_n$ converges
in distribution to $W_\infty$ and that
\begin{equation}\label{EWn2}
\lim_{n\rightarrow +\infty}
{\mathbb E}[|W_n|_\pm^\beta|S]={\mathbb E}[|W_\infty|_\pm^\beta]\ \ a.s..
\end{equation}
The proof now follows due to (\ref{EWn1}) and (\ref{EWn2}).
\item Assume now that $(S_n)_n$ is transient and set this time
$$W_n:=\sum_{i,j=1}^m\theta_{i,j}d_{i,n}(V_n)d_{j,n}(V_n'), $$
for the same choice of $(V_n,V_n')$ as in the previous case.
Observe that
$${\mathbb E}[|W_n|_\pm^u|S]=
    \frac 1{R_{\lfloor nt_m\rfloor}^2}\sum_{x,y\in\mathbb Z^{d_0}}
    \left|\sum_{i,j=1}^m\theta_{i,j}d_{i,n}(x)d_{j,n}(y)\right|_\pm^u $$
for all $ u>0 $.
We recall now, that $R_n\sim pn$ with $p:=\mathbb P(S_k\ne 0,\ \forall k\ge 1)
=2/(\mathbb E[N_\infty]+1)$ (see \cite[p. 35]{Spitzer}).
Due to (\ref{cvgce}) and since $ K_k=\mathbb E[N_\infty^{k-1}] $, we obtain that, for every nonnegative integer $k$, we have
almost surely
$$\lim_{n\rightarrow +\infty}{\mathbb E}[W_n^k|S]=\left(\frac{\mathbb E[N_\infty^{k-1}]}{p}\right)^2\sum_{i,j=1}^m(\theta_{i,j})^k\frac{t_i-t_{i-1}}{t_m}
   \frac{t_j-t_{j-1}}{t_m}  .$$
So $(W_n|S)_n$ converges in distribution to $TT'\theta_{V,V'}$ where $V,V',T,T'$
are independent random variables such that
$$\forall i\in\{1,...,m\},\ \ \mathbb P(V=i)=\mathbb P(V'=i)=\frac{t_i-t_{i-1}}
{t_m} $$
and
$$\forall m\ge 1,\ \ \mathbb P(T=m)=\mathbb P(T'=m)=\frac{\mathbb P(N_\infty=m)}{mp}=(1-p)^{m-1}p.$$
Indeed, setting $N_\infty(0):=\sup_nN_n(0)$, we have $\mathbb P(N_\infty(0)=k)=(1-p)^{k}p$ 
for every integer $k\ge 0$. 
Note that $ N_\infty= 1+N_\infty(0)+\tilde{N}_\infty(0) $ where $ \tilde{N}_\infty(0)=\sum_{n\leq -1}{\bf 1}_{\{S_n=0\}} $ which is an independent copy of 
$ N_\infty(0) $.  Hence we have 
$$\mathbb P(N_\infty=m)=\sum_{k,\ell\ge 0:k+\ell=m-1}
\mathbb P(N_\infty(0)=k)\mathbb P(N_\infty(0)=\ell)=mp^2(1-p)^{m-1},$$
for every integer $m\ge 1$.
Therefore
$$\lim_{n\rightarrow +\infty}{\mathbb E}[|W_n|_\pm^\beta|S]=\left(\frac{\mathbb E[N_\infty^{\beta-1}]}{p}\right)^2\sum_{i,j=1}^m|\theta_{i,j}|_\pm^\beta\frac{t_i-t_{i-1}}{t_m}
   \frac{t_j-t_{j-1}}{t_m} \ \  a.s..$$
This finishes the proof in this case.
\end{itemize}
\end{proof}

Since in the main proof we want to treat simultaneously the cases $ \alpha_0= 1 $ and $ \alpha_0>1 $, we have to introduce some 
additional notations  which will have its counterpart in the case $ \alpha_0>1 $. 
So for $\alpha_0= 1$, we set $\tilde N_{n,t_i}(x):=N_{\lfloor nt_i\rfloor}(x)$, $\tilde N_n^*:=N_{\lfloor nt_m\rfloor}^*$, 
$\tilde R_n:=R_{\lfloor nt_m\rfloor}$,
$\tilde G_n^\pm:=G_n^\pm$ and $\tilde G^\pm:=G^\pm$.
We fix $\varepsilon>0$ such that $\varepsilon<1/(3+4\beta)$ and
$(3+4\gamma)\varepsilon<\frac{4\gamma}\beta-3$. 
If $\beta<4/3$, we assume moreover that $3-\frac{4\min(1,\gamma)}\beta+7\varepsilon<0$ (with $\gamma$ of Item (iv) of Assumption \ref{HYP}).
If $\beta\ge 4/3$, we assume that $3-\frac{4(\theta'+1)}{\beta}+(4\theta'+7)\varepsilon<0$ (with $\theta'$ of Item (vi) of Assumption \ref{HYP}). 
We write $\tilde{\mathcal F}$ for the sub-algebra generated by
$S$. We consider the set
$\tilde \Omega_0\in\tilde{\mathcal F}$ on which $(G_n^+,G_n^-,n^{-\varepsilon}N_n^*)$ converges
to $(G^+,G^-,0)$.
When $\alpha_0= 1$, we will make no distinction between $\mathbb E$ and $\mathbf E$ nor
between $\mathbb P$ and $\mathbf P$.

\subsubsection{Case $\alpha_0>1$}
For every $b,t\ge 0$, we set 
$$F_{n,t}(b):={n^{-1}}\int_0^{n^{\frac 1\alpha}b}N_{\lfloor nt\rfloor}(\lfloor y\rfloor)\, dy,\ \ 
F_{n,t}(-b):=-{n^{-1}}\int_{-n^{\frac 1\alpha}b}^0N_{\lfloor nt\rfloor}(\lfloor y\rfloor)\, dy,$$
$$F_t(b)=\int_0^b\mathcal L_t(x)\, dx \ \ \mbox{and}\ \ F_t(-b)=-\int_{-b}^0\mathcal L_t(x)\, dx,$$
(recall that $\mathcal L_s(x)$ is the local time of $(Y_t)_t$ at position $x$ and up to time $s$).
It was proved in \cite{kest} that $ F_{n,t}(b) $ converges towards $ F_t(b) $ in distribution. We prove some vector version of this result. Let us define 
$$G_n^\pm:=a_n^{-\beta}\sum_{x,y\in\mathbb Z}\left|\sum_{i=1}^m\theta_iN_{\lfloor nt_i\rfloor}(x)N_{\lfloor nt_i\rfloor}(y)\right|_\pm^\beta\ \ 
\mbox{and}\ \  
G^\pm:=\int_{\mathbb R^2}
\left|\sum_{i=1}^m\theta_i\mathcal L_{t_i}(x)\mathcal L_{t_i}(y)\right|_\pm^\beta\, dxdy.$$

\begin{lem}\label{LEM1}
The finite distributions of $(F_{n,t_1},\cdots,F_{n,t_m},G_n^+,G_n^-)_n$ 
converge to the finite distributions of $(F_{t_1},\cdots,F_{t_m},G^+,G^-)$, i.e.
$((F_{n,t_i}(b_j))_{i=1,\cdots,m,j=1,\dots,q},G_n^+,G_n^-)_n$ converges in distribution to the random variable
$((F_{t_i}(b_j))_{i=1,\cdots,m,j=1,\dots,q},G^+,G^-)_n$,
for every integer $q\ge 1$ and every real numbers $b_1,...,b_q$.
\end{lem}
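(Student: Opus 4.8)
The plan is to realize both families $F_{n,t_i}$ and $G_n^\pm$ as continuous functionals of one single rescaled occupation field, and then to deduce the statement from the functional convergence of that field together with the continuous mapping theorem. Introduce
$$\ell_n(t,u):=n^{-(1-\frac1\alpha)}N_{\lfloor nt\rfloor}(\lfloor n^{\frac1\alpha}u\rfloor),\qquad t\ge 0,\ u\in\R,$$
a step function in $u$, constant on each interval $[kn^{-1/\alpha},(k+1)n^{-1/\alpha})$. The change of variables $y=n^{1/\alpha}u$ gives at once $F_{n,t}(b)=\int_0^b\ell_n(t,u)\,du$ and $F_{n,t}(-b)=-\int_{-b}^0\ell_n(t,u)\,du$; and since $N_{\lfloor nt_i\rfloor}(x)=n^{1-\frac1\alpha}\ell_n(t_i,xn^{-1/\alpha})$ for $x\in\Z$, a Riemann-sum identity together with $a_n^\beta=n^{2\beta(1-1/\alpha)+2/\alpha}$ yields
$$G_n^\pm=\int_{\R^2}\Bigl|\sum_{i=1}^m\theta_i\,\ell_n(t_i,u)\ell_n(t_i,v)\Bigr|_\pm^\beta\,du\,dv .$$
The limits are $F_t(b)=\int_0^b\mathcal L_t$ and $G^\pm=\int_{\R^2}|\sum_i\theta_i\mathcal L_{t_i}(u)\mathcal L_{t_i}(v)|_\pm^\beta\,du\,dv$, which are the same functionals evaluated at $\mathcal L$. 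Hence the whole lemma, joint in the $F$'s and the $G$'s, will follow from a single joint convergence statement for the field.

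The key input will be that $(\ell_n(t_1,\cdot),\dots,\ell_n(t_m,\cdot))_n$ converges in distribution, in $C(\R)^m$ with the topology of uniform convergence on compact sets, to $(\mathcal L_{t_1},\dots,\mathcal L_{t_m})$. This is exactly the vector version of the Kesten--Spitzer convergence $F_{n,t}\to F_t$ recalled above (which is the case of one integral at one time). I would prove it by extending their argument: first, convergence of the finite-dimensional distributions of the field, which reduces by the local limit theorem for $S$ and by moment computations (in the spirit of \cite{kest} and of the proof of Lemma~\ref{LEM0}, now carried out with several times $t_i$) to the joint convergence of occupation numbers; second, tightness, which rests on the spatial equicontinuity estimate $\mathbb E|\ell_n(t,u)-\ell_n(t,u')|^q\le C|u-u'|^{1+\eta}$ uniformly in $n$ for $|u-u'|\ge n^{-1/\alpha}$ and suitable $q,\eta>0$. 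This estimate is the classical local-time tightness for walks in the normal domain of attraction of a stable law of index $\alpha\in(1,2]$; it also forces the step-function jumps of $\ell_n$ to vanish, so that the limiting field is the jointly continuous local time $\mathcal L$.

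Finally I would conclude by the continuous mapping theorem, after taking care of the unbounded domain of integration in $G_n^\pm$. Since $\ell_n(t_i,u)=0$ as soon as $|u|>n^{-1/\alpha}\max_{k\le nt_m}|S_k|$, and since $\{n^{-1/\alpha}\max_{k\le nt_m}|S_k|\}_n$ is tight (with limit $\sup_{s\le t_m}|Y_s|$), for every $\varepsilon>0$ there is $M$ (which we also take larger than all $|b_j|$) such that, uniformly in $n$, with probability $>1-\varepsilon$ all the functions $\ell_n(t_i,\cdot)$ are supported in $[-M,M]$, and likewise each $\mathcal L_{t_i}$ is supported in $[-M,M]$ with probability $>1-\varepsilon$; on these events $F_{n,t_i}(\pm b_j)$ and $G_n^\pm$ depend only on the restrictions $\ell_n(t_i,\cdot)\big|_{[-M,M]}$. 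On $C([-M,M])^m$ the maps $\phi\mapsto\int_0^b\phi$ and $(\phi_i)_i\mapsto\int_{[-M,M]^2}|\sum_i\theta_i\phi_i(u)\phi_i(v)|_\pm^\beta\,du\,dv$ are continuous (for $\beta\in(0,2)$ the map $z\mapsto|z|_\pm^\beta$ is uniformly continuous on bounded sets, even when $\beta<1$ where it is merely $\beta$-H\"older), so applying the continuous mapping theorem to the truncated field and then letting $M\to\infty$ (a Slutsky-type argument) gives the asserted joint convergence. The main obstacle is the second step above: the multi-time functional convergence of $\ell_n$ in the uniform-on-compacts topology, and especially the uniform-in-$n$ spatial equicontinuity, which is both the technical heart of the proof and precisely what is needed to handle the non-Lipschitz functional $|\cdot|_\pm^\beta$ and the unbounded domain in $G_n^\pm$.
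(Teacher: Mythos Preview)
Your approach is correct and takes a genuinely different route from the paper. The paper does \emph{not} establish or invoke functional convergence of the rescaled occupation field $\ell_n$; instead it discretizes space into boxes of side $\tau n^{1/\alpha}$, approximates $G_n^\pm$ by a Riemann-type sum $V^\pm(\tau,M,n)$ over these boxes, and controls the error $G_n^\pm-V^\pm(\tau,M,n)$ directly via the moment bounds $\sup_x\mathbb E[N_n(x)^4]=O(n^{4-4/\alpha})$ and $\mathbb E[|N_n(y)-N_n(z)|^4]\le C|y-z|^{2\alpha-2}n^{2-2/\alpha}$. Since $V^\pm(\tau,M,n)$ is a continuous function of finitely many increments $F_{n,t_j}((k+1)\tau)-F_{n,t_j}(k\tau)$, the known finite-dimensional convergence of the $F_{n,t_j}$ gives joint convergence of $((F_{n,t_i}(b_j))_{i,j},V^+,V^-)$, and one concludes by letting $\tau\to0$, $M\tau\to\infty$. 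Your route is conceptually cleaner---one functional convergence plus the continuous mapping theorem---but it front-loads all the difficulty into the convergence of $\ell_n$ in uniform-on-compacts topology (strictly speaking in a c\`adl\`ag space, since $\ell_n$ is a step function; you would need to interpolate or argue that the jumps vanish), and the tightness there requires precisely the same moment estimates the paper uses, possibly at higher order when $\alpha\le 3/2$ so that the Kolmogorov exponent exceeds~$1$. The paper's discretization avoids stating or proving the stronger local-time functional limit theorem and is in that sense more self-contained; your approach would be shorter once the local-time convergence is taken as known from the literature.
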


\begin{proof}
The proof of this convergence result follows mainly the proof of Lemma 6 of \cite{kest}. 
For any real number $\tau>0$ and any positive integers $n$ and $M$, we define
$$V^\pm(\tau,M,n):=\tau^{2-2\beta}\sum_{|k|,|\ell|\le M}|T(k,\ell,n)|^\beta_\pm,$$
where
$$T(k,\ell,n):=n^{-2}\sum_{j=1}^m\theta_j\sum_{x=\lceil k\tau n^{\frac 1\alpha}\rceil}^{\lceil (k+1)\tau n^{\frac 1\alpha}\rceil-1}
\sum_{y=\lceil \ell\tau n^{\frac 1\alpha}\rceil}^{\lceil (\ell+1)\tau n^{\frac 1\alpha}\rceil-1} N_{\lfloor nt_j\rfloor}(x)N_{\lfloor nt_j\rfloor}(y).$$
As in \cite{kest}, we decompose $G_n^\pm-V^\pm(\tau,M,n)$ as follows
$$G_n^\pm-V^\pm(\tau,M,n)=U^\pm(\tau,M,n)+W_1^\pm(\tau,M,n)+W_2^\pm(\tau,M,n),$$
with
$$U^\pm(\tau,M,n):=n^{-2\delta\beta}\sum_{(x,y)\in A_{\tau,M,n}}\left|
     \sum_{j=1}^m\theta_jN_{\lfloor nt_j\rfloor}(x)N_{\lfloor nt_j\rfloor}(y)\right|_\pm^\beta, $$
where
$A_{\tau,M,n}:=\mathbb Z^2\setminus\{\lceil -M\tau n^{\frac 1\alpha}\rceil,...,
    \lceil (M+1)\tau n^{\frac 1\alpha}\rceil-1\}^2$,
$$W_1^\pm(\tau,M,n):=\sum_{|k|,|\ell|\le M}\sum_{(x,y)\in E_{k,n}\times E_{\ell,n}}n^{-2\delta\beta}W_{1,k,\ell}^\pm(x,y), $$
where $E_{k,n}:=\{\lceil k\tau n^{\frac 1\alpha}\rceil,...,
    \lceil (k+1)\tau n^{\frac 1\alpha}\rceil-1\}$,
$$ W_{1,k,\ell}^\pm(x,y):=\left|
     \sum_{j=1}^m\theta_jN_{\lfloor nt_j\rfloor}(x)N_{\lfloor nt_j\rfloor}(y)\right|_\pm^\beta- n^{2\beta}(\#E_{k,n}\, \#E_{\ell,n})^{-\beta}|T(k,\ell,n)|_\pm^\beta$$
and
$$W_2^\pm(\tau,M,n):= \sum_{|k|,|\ell|\le M}\left\{n^{2\beta-2\delta\beta}
(\# E_{k,n}\, \# E_{\ell,n})^{1-\beta}-\tau^{2-2\beta}\right\}|T(k,\ell,n)|_\pm^\beta.$$
The proof follows now in five steps:\\[1ex]
1) Observe that, due to \cite[Lemma 1]{kest}, there exists a function $ \eta $ satisfying $\lim_{x\rightarrow+\infty}\eta(x)=0$ such that
\begin{eqnarray} \label{KSL1}
\sup_n\mathbb P\Big(U^\pm(\tau,M,n)\ne 0\Big)\le\sup_n
  \mathbb P\Big(\exists x\ :\ |x|\ge M\tau n^{\frac 1\alpha}\ \mbox{and}\ 
  N_{\lfloor nt_m\rfloor}(x)\ne 0\Big)=\eta(M\tau).
\end{eqnarray}
2)  We prove that there exists some $ K>0 $ and $ u>0 $ such that for all $ M>1 $ one has
\begin{eqnarray} \label{KSL2}
      \sup_n \mathbb E[|W_1^\pm(\tau,M,n)|]\leq  K(M\tau)^2\tau^u.\end{eqnarray}
We first do the case $\beta\le 1$. Using the fact that $||a|_\pm^\beta-|b|_\pm^\beta|\le 2^{1-\beta}|a-b|^\beta$, we have
\begin{eqnarray*}
&& 2^{\beta-1}{\mathbb E}[|W_{1,k,\ell}^\pm(x,y)|] \\
&\le& \mathbb E\left[\left|
     \sum_{j=1}^m\theta_jN_{\lfloor nt_j\rfloor}(x)N_{\lfloor nt_j\rfloor}(y)- n^{2}(\#E_{k,n}\, \#E_{\ell,n})^{-1}T(k,\ell,n)\right|^\beta\right]\\
&\le& \left\Vert
     \sum_{j=1}^m\theta_jN_{\lfloor nt_j\rfloor}(x)N_{\lfloor nt_j\rfloor}(y)- n^{2}(\#E_{k,n}\, \#E_{\ell,n})^{-1}T(k,\ell,n)\right\Vert_2^\beta\\
&\le&(\#E_{k,n}\, \#E_{\ell,n})^{-\beta} \left\Vert
     \sum_{j=1}^m\sum_{(x',y')\in E_{k,n}\times E_{\ell,n}}
    \theta_j\left(N_{\lfloor nt_j\rfloor}(x)N_{\lfloor nt_j\rfloor}(y)- N_{\lfloor nt_j\rfloor}(x')N_{\lfloor nt_j\rfloor}(y')\right)\right\Vert_2^\beta\\
&\le&(\#E_{k,n}\, \#E_{\ell,n})^{-\frac\beta 2} 
\left(\sum_{i=1}^m\theta_i^2\sum_{j=1}^m\sum_{(x',y')\in E_{k,n}\times E_{\ell,n}}
\left\Vert
   \left(N_{\lfloor nt_j\rfloor}(x)N_{\lfloor nt_j\rfloor}(y)- N_{\lfloor nt_j\rfloor}(x')N_{\lfloor nt_j\rfloor}(y')\right)\right\Vert_2^2\right)^{\frac\beta 2},
\end{eqnarray*}
due to the Cauchy-Schwarz inequality.
Now we have to estimate
$$\sum_{(x',y')\in E_{k,n}\times E_{\ell,n}}
{\mathbb E}\Big[|N_{\lfloor nt_j \rfloor}(x)N_{\lfloor nt_j \rfloor}(y)-N_{\lfloor nt_j \rfloor}(x')N_{\lfloor nt_j \rfloor}(y')|^2\Big],$$
for $(x,y)\in E_{k,n}\times E_{\ell,n}$.
To this end, we use $\mathbb E[|ab-a'b'|^2]\le 2 \Vert a\Vert_4^{2}
  \Vert b-b'\Vert_4^2+\Vert a-a'\Vert_4^{2} \Vert b'\Vert_4^2$ together
with the fact that
$$\sup_x\mathbb E[(N_n(x))^4]=O(n^{4-\frac 4\alpha})\ \ \mbox{and}\ \ \sup_{y\ne z}\frac{\mathbb E[|N_n(y)-N_n(z)|^4]}{|y-z|^{2\alpha-2}}=
O(n^{2-\frac 2\alpha})
$$
(see for example \cite[p.77]{jainpruit83} for the last estimate).
This gives, 
\begin{equation}\label{erreurproduit}
{\mathbb E}[|N_{\lfloor nt_j \rfloor}(x)N_{\lfloor nt_j \rfloor}(y)-N_{\lfloor nt_j \rfloor}(x')N_{\lfloor nt_j \rfloor}(y')|^2]
\le C \tau^{\alpha-1} n^{4-\frac 4\alpha},
\end{equation}
for every $(x,y),(x',y')\in E_{k,n}\times E_{\ell,n}$
and for some $C>0$ independent of $(\tau,M,n,k,\ell)$.
Therefore, we obtain
$${\mathbb E}[|W_1^\pm(\tau,M,n)|]\le C'(2M+1)^2\tau^{2+\frac\beta 2(\alpha-1)},$$
where $C'$ does not depend on $(\tau,M,n)$. From this we conclude in the case $\beta\le 1$.\\[1ex]
When $\beta>1$, we use $||a|_\pm^\beta-|b|_\pm^\beta|\le\beta|a-b|(|a|^{\beta-1}+|b|^{\beta-1})$ combined with the Cauchy-Schwarz inequality and obtain
\begin{eqnarray*}
 &&{\mathbb E}[|W_{1,k,\ell}^\pm(x,y)|]\\
&\le& \beta
(\#E_{k,n}\, \#E_{\ell,n})^{-1}
\left\Vert 
     \sum_{j=1}^m\theta_j\sum_{(x',y')\in E_{k,n}\times E_{\ell,n}}(N_{\lfloor nt_j\rfloor}(x)N_{\lfloor nt_j\rfloor}(y)-N_{\lfloor nt_j\rfloor}(x')N_{\lfloor nt_j\rfloor}(y')) \right\Vert_2\times\\
&\ &\ \ \ \ 
\displaystyle \times 
\left\Vert \left |\sum_{j=1}^m\theta_j N_{\lfloor nt_j\rfloor}(x)
  N_{\lfloor nt_j\rfloor}(y) \right|^{\beta-1}+
     \Big(n^{2}(\#E_{k,n}\, \#E_{\ell,n})^{-1}|T(k,\ell,n)|\Big)^{\beta-1} \right\Vert_2\\
&\le& \beta
(\#E_{k,n}\, \#E_{\ell,n})^{-1}\sum_{j=1}^m|\theta_j|\sum_{(x',y')\in E_{k,n}\times E_{\ell,n}}
\left\Vert 
     (N_{\lfloor nt_j\rfloor}(x)N_{\lfloor nt_j\rfloor}(y)-N_{\lfloor nt_j\rfloor}(x')N_{\lfloor nt_j\rfloor}(y')) \right\Vert_2\times\\
&\ &\ \ \ \times 
\left(\left\Vert \sum_{j=1}^m\theta_j N_{\lfloor nt_j\rfloor}(x)
  N_{\lfloor nt_j\rfloor}(y) \right\Vert
_{2(\beta-1)}^{\beta-1}+n^{2\beta-2}(\#E_{k,n}\, \#E_{\ell,n})^{1-\beta}\left\Vert
     T(k,\ell,n) \right\Vert
_{2(\beta-1)}^{\beta-1}\right)\\
&\le& C(\tau^{\alpha-1}n^{4-\frac 4\alpha})^{\frac 12}\left(\sup_{x'}\Vert N_{\lfloor nt_m\rfloor}(x')\Vert_{4(\beta-1)}^{2\beta-2}
   + \right.\\
&\ &\ \ \ \ \left.+n^{2\beta-2}(\tau n^{\frac 1\alpha})^{2-2\beta}
    \left(n^{-2}(\tau n^{\frac 1\alpha})^2\sup_{x'}\Vert N_{\lfloor nt_m\rfloor}  
    (x')\Vert_{4(\beta-1)}^2\right)^{\beta-1}\right),
\end{eqnarray*}
due to the Cauchy Schwarz inequality and to (\ref{erreurproduit}).
Hence we have
$${\mathbb E}[|W_{1,k,\ell}^\pm(x,y)|]\le C'\tau^{\frac{\alpha-1}2}n^{2-\frac 2\alpha} n^{\left(1-\frac 1\alpha\right)
2(\beta-1)}= C'\tau^{\frac{\alpha-1}2}n^{2\beta\left(1-\frac 1\alpha\right)}$$
for some $C'>0$ and so
$${\mathbb E}[|W_1^\pm(\tau,M,n)|]
\le C''(2M+1)^2\tau^{2+\frac{\alpha-1}2},$$
where $C''$ does not depend on $(\tau,M,n)$ and we conclude in the case when
$\beta>1$.\\[1ex]
3)  We notice that 
$$ \mathcal V^\pm(\tau,M):=\tau^{2-2\beta}\sum_{|k|,|\ell|\le M}
\left|\int_{k\tau}^{(k+1)\tau}\int_{\ell\tau}^{(\ell+1)\tau}
   \sum_{j=1}^m\theta_j\mathcal L_{t_j}(x)\mathcal L_{t_j}(y)\, dxdy\right|_\pm^\beta $$ 
converge to
$G^\pm$ as $(\tau,M\tau)\rightarrow(0,\infty)$, since the local times $x\mapsto\mathcal L_{t_j}(x)$
are almost surely continuous and compactly supported (see \cite{kest}).\\[1ex]
4)  We observe that, for every choice of $(\tau,M)$ the sequence $(W_2^\pm(\tau,M,n))_n$
converges in probability to 0 as $ n\rightarrow\infty$. This comes from the fact that for every $(k,\ell)$ the sequence $(T(k,\ell,n))_n$
converges in distribution to $$\sum_{j=1}^m\theta_j\int_{k\tau}^{(k+1)\tau}
\mathcal L_{t_j}(x)\, dx\int_{\ell\tau}^{(\ell+1)\tau}
\mathcal L_{t_j}(y)\, dy$$ and the fact that the sequence $(n^{2\beta(1-\delta)}
(\#E_{k,n}\, \#E_{\ell,n})^{1-\beta}-\tau^{2-2\beta})_n$ converges to 0.\\[1ex]
5) For every choice of $(\tau,M)$, for every $q$ and every real numbers $b_1,...,b_q$, the sequence of random variables 
$((F_{n,t_i}(b_j))_{i,j},V^+(\tau,M,n),V^-(\tau,M,n)))_n$ converges
in distribution to $((F_{t_i}(b_j))_{i,j},\mathcal V^+(\tau,M),\mathcal V^-(\tau,M))$. Indeed, we recall that
$$V^\pm(\tau,M,n):=\tau^{2-2\beta}\sum_{|k|,|\ell|\le M}|T(k,\ell,n)|^\beta_\pm$$ and notice that
$$T(k,\ell,n)=\left(\sum_{j=1}^m\theta_j\Big(F_{n,t_j}((k+1)\tau)-F_{n,t_j}(k\tau)\Big)
\Big(F_{n,t_j}((\ell+1)\tau)-F_{n,t_j}(\ell\tau)\Big)\right) +O(n^{-1})$$
6) Now we conclude. Let $ z_{i,j},z_\pm\in\mathbb{R} $ and 
$ \epsilon>0 $.
Due to Points 1, 2 and 3, we fix $ M>1 $ and $\tau>0 $ such, for every $n$, we have
\begin{equation}\label{numero1}
\mathbb E\left[\left|e^{i(z_+G_n^++z_-G_n^-)}
     - e^{i(z_+(V^+(\tau,M,n)+W_2^+(\tau,M,n))+
          z_-(V^-(\tau,M,n)+W_2^-(\tau,M,n)))} \right|\right]<\varepsilon 
\end{equation}
and
\begin{equation}\label{numero2}
\mathbb{E}\left[\left|e^{i\left(z_+\mathcal V^+(\tau,M)+z_-\mathcal V^-(\tau,M)\right)}- 
            e^{i\left(z_+ G^++z_- G^-\right)}\right|\right] <\epsilon  . 
\end{equation}
Due to Points 4 and 5 for this choice of $(M,\tau)$, there
exists $n_0$ such that for every $n\ge n_0$, 
\begin{equation}\label{numero3}
    \mathbb{E}\left[\left|e^{iz_+W_2^+(\tau,M,n)+iz_-W_2^-(\tau,M,n)}-1\right|\right]  <\epsilon
\end{equation}
and 
\begin{equation}\label{numero4}
 \Bigg| \mathbb{E}\left[e^{i\left(\sum_{i,j} z_{ij}F_{t_i}(b_j))+z_+\mathcal V^+(\tau,M)+z_-\mathcal V^-(\tau,M)\right)}\right]   -  \mathbb{E}\left[e^{i\left(\sum_{i,j} z_{ij}F_{n,t_i}(b_j))+z_+V^+(\tau,M,n)+z_- V^-(\tau,M,n)\right)}\right]\Bigg| <\epsilon  .
\end{equation}
Hence, for every $n\ge n_0$, we have
\begin{eqnarray*}
 && \Bigg| \mathbb{E}\left[e^{i\left(\sum_{i,j} z_{ij}F_{t_i}(b_j))+z_+G^++z_- G^-\right)}\right]
 - \mathbb{E}\left[e^{i\left(\sum_{i,j} z_{ij}F_{n,t_i}(b_j))+z_+G_n^++z_-G_n^-\right)}\right]  \Bigg| \\
 &\leq&  3\epsilon+ 
\Bigg| \mathbb{E}\left[e^{i\left(\sum_{i,j} z_{ij}F_{t_i}(b_j))+z_+\mathcal V^+(\tau,M)+z_-\mathcal V^-(\tau,M)\right)}\right]
 - \mathbb{E}\left[e^{i\left(\sum_{i,j} z_{ij}F_{n,t_i}(b_j))+z_+V^+(\tau,M,n)+z_-V^-(\tau,M,n)\right)}\right]  \Bigg| \\
&\leq& 4\varepsilon.
\end{eqnarray*}
where we used \eqref{numero1}, \eqref{numero2}, \eqref{numero3}
for the first inequality and \eqref{numero4} for the last one.
\end{proof}

Let $ {\mathcal  C}$ be the set of continuous functions $g:\mathbb R\rightarrow[-t_m,t_m]$.
We endow this set with the following metric $D$ corresponding to the uniform convergence on every
compact:
$$D(g,h):=\sum_{N\ge 1}2^{-N}\sup_{x\in[-N;N]}|g(x)-h(x)|.$$

\begin{lem}\label{LEM1bis}
The sequence $ (F_{n,t_1},...,F_{n,t_m})_{n\in\mathbb{N}} $ is tight in $ ({\mathcal  C},D)^m $.
\end{lem}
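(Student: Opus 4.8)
Tightness in $(\mathcal C,D)^m$ will follow from tightness of each marginal sequence $(F_{n,t_i})_n$ in $(\mathcal C,D)$. Indeed, $(\mathcal C,D)$ is a Polish space (the metric $D$ induces the topology of uniform convergence on compact sets, and the subset of functions with range in $[-t_m,t_m]$ is closed in $C(\mathbb R,\mathbb R)$), so if for each $i$ and each $\eta>0$ we can find a compact $K_i\subset\mathcal C$ with $\sup_n\mathbb P(F_{n,t_i}\notin K_i)<\eta/m$, then $K_1\times\cdots\times K_m$ is compact in $(\mathcal C,D)^m$ and $\sup_n\mathbb P\big((F_{n,t_1},\dots,F_{n,t_m})\notin K_1\times\cdots\times K_m\big)<\eta$. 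So I fix $t\in\{t_1,\dots,t_m\}$ and prove $(F_{n,t})_n$ is tight in $(\mathcal C,D)$.

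I would use the Arzel\`a--Ascoli description of relatively compact subsets of $\mathcal C$: a set is relatively compact iff it is pointwise bounded and equicontinuous on each interval $[-N,N]$. Pointwise boundedness is automatic, since $F_{n,t}$ is nondecreasing with $F_{n,t}(0)=0$ and $F_{n,t}(\pm\infty)\in[-t_m,t_m]$. Thus tightness of $(F_{n,t})_n$ reduces to proving that for every integer $N\ge1$ and every $\epsilon>0$,
\[
\lim_{\delta\to0}\ \sup_{n}\ \mathbb P\bigl(w_{N,\delta}(F_{n,t})>\epsilon\bigr)=0,
\qquad w_{N,\delta}(g):=\sup\{|g(b)-g(b')|:b,b'\in[-N,N],\ |b-b'|\le\delta\},
\]
the compact set then being built as $\{g:\forall N,\ w_{N,\delta_N}(g)\le\epsilon_N\}$ for suitably decreasing $\delta_N,\epsilon_N$.

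For the modulus estimate I first record that for $b_1<b_2$ one has $F_{n,t}(b_2)-F_{n,t}(b_1)=n^{-1}\int_{n^{1/\alpha}b_1}^{n^{1/\alpha}b_2}N_{\lfloor nt\rfloor}(\lfloor y\rfloor)\,dy\ge0$, so $F_{n,t}$ is nondecreasing on $\mathbb R$. Using monotonicity, with the grid $x_k:=-N+k\delta$ one gets $w_{N,\delta}(F_{n,t})\le\max_{k}\bigl(F_{n,t}(x_{k+2})-F_{n,t}(x_k)\bigr)$ over the $O(N/\delta)$ relevant values of $k$, and each such increment is at most $n^{-1}\sum_{x\in I_k}N_{\lfloor nt\rfloor}(x)$ for an interval $I_k\subset\mathbb Z$ with $\#I_k\le 2\delta n^{1/\alpha}+1$. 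A union bound over $k$ together with Markov's inequality applied to second moments yields
\[
\mathbb P\bigl(w_{N,\delta}(F_{n,t})>\epsilon\bigr)\le\frac{CN}{\delta\,\epsilon^2}\,\sup_{\#I\le 2\delta n^{1/\alpha}+1}\mathbb E\Bigl[\Bigl(n^{-1}\!\sum_{x\in I}N_{\lfloor nt\rfloor}(x)\Bigr)^{\!2}\Bigr].
\]
By Cauchy--Schwarz the expectation is at most $n^{-2}(\#I)^2\sup_x\mathbb E[N_{\lfloor nt\rfloor}(x)^2]$, and since $\lfloor nt\rfloor\le nt_m$ and $\sup_x\mathbb E[N_m(x)^2]=O(m^{2-2/\alpha})$ (a consequence of the fourth-moment bound $\sup_x\mathbb E[N_m(x)^4]=O(m^{4-4/\alpha})$ already invoked in the proof of Lemma~\ref{LEM1}, or directly \cite{jainpruit83}), this is $\le C(\delta^2+n^{-2/\alpha})$. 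Hence $\mathbb P(w_{N,\delta}(F_{n,t})>\epsilon)\le CN(\delta+n^{-2/\alpha}\delta^{-1})/\epsilon^2$, so that $\limsup_n\mathbb P(w_{N,\delta}(F_{n,t})>\epsilon)\le CN\delta/\epsilon^2\to0$ as $\delta\to0$; since each $F_{n,t}$ is Lipschitz with the deterministic constant $N^*_{\lfloor nt\rfloor}\le nt_m$, the finitely many small values of $n$ are harmless, and the displayed limit follows. (This is in the spirit of, and could also be quoted from, Lemma~6 of \cite{kest}.)

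The only step demanding genuine care is the uniformity --- in $n$ and in the location of $I$ inside $[-Nn^{1/\alpha},Nn^{1/\alpha}]$ --- of the second-moment bound for the occupation time of an interval of length of order $n^{1/\alpha}$; this is exactly where the self-similar scaling of the walk (the Jain--Pruitt estimates) is used, the rest being bookkeeping.
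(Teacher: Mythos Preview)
Your argument is correct but follows a different route from the paper's. You control the modulus of continuity directly via second-moment bounds on occupation times, using $\sup_x\mathbb E[N_m(x)^2]=O(m^{2-2/\alpha})$ and a union bound over a grid of size $O(N/\delta)$. The paper instead leverages the finite-dimensional convergence already established in Lemma~\ref{LEM1}: since the limit $F$ is a.s.\ continuous with $F(+\infty)-F(-\infty)=1$, one first chooses $M$ large so that $F_n(M)-F_n(-M)$ is close to~$1$ with high probability (controlling the tail by convergence in law), and then transfers the modulus-of-continuity bound from $F$ to $F_n$ at the finitely many grid points $k\delta$, $|k|\le M/\delta$, again by convergence in law. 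This is a softer argument that avoids any new moment computations, at the cost of depending on Lemma~\ref{LEM1}. Your approach is more self-contained and quantitative, and would work even without having first proved finite-dimensional convergence.

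One small imprecision: the Lipschitz constant of $F_{n,t}$ is $n^{1/\alpha-1}N^*_{\lfloor nt\rfloor}$ rather than $N^*_{\lfloor nt\rfloor}$, and $N^*$ is random (though bounded by the deterministic $\lfloor nt\rfloor$). This does not affect your conclusion, since all you need is that for each fixed $n$ the function $F_{n,t}$ is a.s.\ uniformly continuous, which is clear.
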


\begin{proof}
It is enough to prove the tightness of $ F_{n,t_i} $ for all $ i\in\{1,...,m\} $. To simplify notations in this proof we use $ F_n $ to denote $ F_{n,t_i}/t_i $ and 
$ F $ to denote $ F_{t_i}/t_i $.
As usual, for any $f\in {\mathcal  C} $, we denote by $\omega(f,\cdot)$ the modulus of continuity of $f$. 
Since $F_n(0)=0$ for every $n$, it is enough
to prove
\begin{equation}
\forall\varepsilon>0,\ \ \ 
\lim_{\delta\rightarrow 0}\limsup_{n\rightarrow +\infty}\mathbb P(\omega(F_n,\delta)\ge\varepsilon)=0
\end{equation}
(see \cite[p.83]{Billingsley}).
Let $\varepsilon>0$ and $\varepsilon_0>0$.
Let $M>0$ be such that $\mathbb P\Big(|F(M)-F(-M)|\le 1-(\varepsilon/2)\Big)\le\varepsilon_0/2$. Since $(F_n(M)-F_n(-M))_n$ converges in distribution
to $F(M)-F(-M)$, we have
\begin{equation}\label{bounded}
\limsup_{n\rightarrow+\infty}\mathbb P\Big(|F_n(M)-F_n(-M)|\le 1-(\varepsilon/2)\Big) \le \mathbb P\Big(|F(M)-F(-M)|\le 1-(\varepsilon/2)\Big)\le\varepsilon_0/2.
\end{equation}
Let $\delta_0>0$ be such that, for every $\delta\in(0,\delta_0)$,
$\mathbb P(\omega(F,\delta)\ge\varepsilon/2)\le\varepsilon_0/2$
(since $F$ is almost surely uniformly continuous). Since the finite distributions of $(F_n)_n$ 
converge to the finite distribution of $F$, we have
\begin{eqnarray}\label{B1}
 && \limsup_{n\rightarrow+\infty}\mathbb P\left(\exists k=-\left\lceil\frac M\delta\right\rceil,...,\left\lceil\frac M\delta\right\rceil,\ 
\left|F_n\left(k\delta\right)-F_n\left((k+1)\delta\right)\right|\ge\frac{\varepsilon}2\right) \\
&\le&\mathbb P\left(\exists k=-\left\lceil\frac M\delta\right\rceil,...,\left\lceil\frac M\delta\right\rceil,\ 
\left|F\left(k\delta\right)-F\left((k+1)\delta\right)\right|\ge\frac{\varepsilon}2\right)\nonumber\\
&\le& \mathbb P\left(\omega(F,\delta)\ge\frac\varepsilon2\right)\le\frac{\varepsilon_0}2\label{bounded2}\nonumber.
\end{eqnarray}
Putting (\ref{bounded}) and (\ref{B1}) together, we obtain
that, for every $\delta<\delta_0$, we have
\bigskip

\begin{eqnarray*}
\limsup_{n\rightarrow +\infty}\mathbb P(\omega(F_n,\delta)\ge\varepsilon)
   &\leq&   \limsup_{n\rightarrow+\infty}\mathbb P\left(\exists k=-\left\lceil\frac M\delta\right\rceil,...,\left\lceil\frac M\delta\right\rceil,\ 
\left|F_n\left(k\delta\right)-F_n\left((k+1)\delta\right)\right|\ge\frac{\varepsilon}2\right) \\
 &&+ \limsup_{n\rightarrow+\infty}  \mathbb P\Big(|F_n(M)-F_n(-M)|\le 1-(\varepsilon/2)\Big) 
\end{eqnarray*}
and so
$$\limsup_{n\rightarrow +\infty}\mathbb P(\omega(F_n,\delta)\ge\varepsilon)
\le\varepsilon_0.$$
\end{proof}

Due to Lemma \ref{LEM1} and Lemma \ref{LEM1bis}, the sequence 
$(F_{n,t_1},\dots,F_{n,t_m},G_n^+,G_n^-)_n$ 
converges in distribution to $(F_{t_1},\dots,F_{t_m},G^+,G^-)$ in $({\mathcal  C},D)^m\times(\mathbb R,|\cdot|)^2$.\\[1mm]
We fix $\varepsilon\in(0,\beta\delta/(1+\beta))$ such that
$(3+4\beta)\varepsilon<1/\alpha$ and $(3+4\gamma)\varepsilon\alpha<\frac{4\gamma}\beta-3$ (this is possible due to $\gamma>3\beta/4$). 
If $\beta<4/3$, we assume moreover that $\frac 3\alpha-\frac{4\min(1,\gamma)}{\alpha\beta}+7\varepsilon<0$ (with $\gamma$ of Item (iv) of 
Assumption \ref{HYP}).
If $\beta\ge 4/3$, we assume also that $\frac 1\alpha\left(
3-\frac{4(\theta'+1)}\beta\right)+(4\theta'+7)\varepsilon<0$
(with $\theta'$ of Item (vi) of Assumption \ref{HYP}).
Using for example \cite{jainpruit83} for the maximal occupation time
and appendix of \cite{BFFN} for the range, we know that
$(n^{-1/\alpha-\varepsilon}R_n,n^{(1/\alpha)-1-\varepsilon}N_n^*)_n$ converges almost surely to 0.
Therefore the sequence $(F_{n,t_1},\dots,F_{n,t_m},G_n^+,G_n^-,n^{-1/\alpha-\varepsilon}R_n,n^{(1/\alpha)-1-\varepsilon}N_n^*)_n$
converges in distribution to $(F_{t_1},\dots,F_{t_m},G^+,G^-,0,0)$ in $({\mathcal  C},D)^m\times(\mathbb R,|\cdot|)^4$.

Now using the Skorokhod representation theorem (see \cite{Dudley} p.1569) (since $({\mathcal  C},D)$ and $\mathbb R$ are 
separable and complete), we know that there exists a  probability space
$(\tilde\Omega,\tilde{\mathcal F},\tilde{\mathbb P})$ with random variables 
$$(\tilde F_{n,t_1},\dots,\tilde F_{n,t_m},\tilde G_n^+,\tilde G_n^-,\tilde R_{n},\tilde N_n^*)_n\ \mbox{and}\ (\tilde F_{t_1},\dots,\tilde F_{t_m},\tilde G^+,\tilde G^-)$$
 defined on $(\tilde\Omega,\tilde{\mathcal F},\tilde{\mathbb P})$ such that
\begin{itemize} 
\item for every integer $n$,
$(\tilde F_{n,t_1},\dots,\tilde F_{n,t_m},\tilde G_n^+,\tilde G_n^-,\tilde R_n,\tilde N_n^*)$ has the same distribution (with respect
to $\tilde{\mathbb P}$) as
$(F_{n,t_1},\dots, F_{n,t_m},G_n^+,G_n^-,R_{\lfloor nt_m\rfloor},N_{\lfloor nt_m\rfloor}^*)$
(with respect to $\mathbb P$)
in $({\mathcal  C},D)^m\times (\mathbb R,\vert\cdot\vert)^4$;
\item $(\tilde F_{t_1},\dots,\tilde F_{t_m},\tilde G^+,\tilde G^-)$ has the same distribution as $(F_{t_1},\dots,F_{t_m},G^+,G^-)$
in $({\mathcal  C},D)^m\times (\mathbb R,\vert\cdot\vert)^4$;
\item the sequence $(\tilde F_{n,t_1},\dots,\tilde F_{n,t_m},\tilde G_n^+,\tilde G_n^-,n^{-1/\alpha-\varepsilon}\tilde R_n,n^{(1/\alpha)
-1-\varepsilon}\tilde N_n^*)_n$
converges almost surely to $(\tilde F_{t_1},\dots,\tilde F_{t_m},\tilde G^+,\tilde G^-,0,0)$ 
in $({\mathcal  C},D)^m\times(\mathbb R,|\cdot|)^4$.
\end{itemize}

Observe that, for every $x\in\mathbb Z$ and every $n\ge 1$,
$\mathfrak{N}_n(x):f\mapsto n(f((x+1)n^{-\frac 1\alpha})-f(xn^{-\frac 1\alpha}))$ is a continuous functional of 
$({\mathcal  C},D)$ and that 
$N_{\lfloor nt_i\rfloor}(x)=\mathfrak{N}_n(x)(F_{n,t_i})$ (for every
$i\in\{1,\dots,m\}$).
Therefore, for every integers $x$ and $n\ge 1$, for every
$i\in\{1,\dots,m\}$, we define 
$$\tilde N_{n,t_i}(x):=\mathfrak{N}_n(x)(\tilde F_{n,t_i}) .$$
Observe that, for every integer $N\ge 1$,
$$\left((\tilde N_{n,t_i}(x))_{x\in\{-N,\dots,N\};i\in\{1,\dots,m\}},\tilde N_n^*,\tilde R_n,\tilde G_n^\pm\right)$$
has the same distribution as
$$\left(( N_{\lfloor nt_i\rfloor}(x))_{x\in\{-N,\dots,N\};i\in\{1,\dots,m\}}, N_{\lfloor nt_m\rfloor}^*, R_{\lfloor nt_m\rfloor}, G_n^\pm\right).$$
In particular $\tilde N_{n,t_i}(x)$ takes integer values
and $0\le\tilde N_{n,t_i}(x)\le\tilde N_{n,t_m}(x)$. Moreover we have the following result.

\begin{lem}\label{LEMMA}
Let $n$ be a positive integer.
We have
\begin{equation}\label{eqNn}
\sup_{x\in\mathbb Z}\tilde N_{n,t_m}(x)\le\tilde N_n^*,
\end{equation}
\begin{equation}\label{eqRn}
\#\{x\in\mathbb Z:\tilde N_{n,t_m}(x)>0\}=\tilde R_n
\end{equation}
and
\begin{equation}\label{eqGn}
\tilde G_n^\pm= n^{-2\beta\delta}\sum_{x,y\in\mathbb Z}\left|\sum_{i=1}^m\theta_i\tilde N_{ n,t_i}(x)\tilde N_{ n,t_i}(y)\right|_\pm^\beta.
\end{equation}
\end{lem}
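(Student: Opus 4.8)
The plan is to deduce the three identities from the equality of laws recorded just before the statement: for every integer $N\ge 1$ the vector $\big((\tilde N_{n,t_i}(x))_{|x|\le N,\,1\le i\le m},\tilde N_n^*,\tilde R_n,\tilde G_n^+,\tilde G_n^-\big)$ has, under $\tilde{\mathbb P}$, the same law as $\big((N_{\lfloor nt_i\rfloor}(x))_{|x|\le N,\,1\le i\le m},N_{\lfloor nt_m\rfloor}^*,R_{\lfloor nt_m\rfloor},G_n^+,G_n^-\big)$ under $\mathbb P$. The single difficulty is that (\ref{eqNn})--(\ref{eqGn}) concern the full index set $\mathbb Z$, whereas this identity is only finite dimensional; so I first localise everything onto an event where the relevant sums reduce to finitely many terms.

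First I would observe that, because $\sum_{|x|\le N}N_{\lfloor nt_m\rfloor}(x)\le\lfloor nt_m\rfloor$ holds surely, the same bound holds $\tilde{\mathbb P}$-a.s.\ for $\sum_{|x|\le N}\tilde N_{n,t_m}(x)$ for each $N$, whence, letting $N\to\infty$, $\sum_{x\in\mathbb Z}\tilde N_{n,t_m}(x)\le\lfloor nt_m\rfloor<\infty$ a.s.; together with the already established inequalities $0\le\tilde N_{n,t_i}(x)\le\tilde N_{n,t_m}(x)$, this shows that $\tilde{\mathbb P}$-a.s.\ all the fields $x\mapsto\tilde N_{n,t_i}(x)$ are finitely supported. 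Next I introduce the finite-dimensional events $\tilde B_N:=\big\{\sum_{|x|\le N}\tilde N_{n,t_m}(x)=\lfloor nt_m\rfloor\big\}$ and $B_N:=\big\{\sum_{|x|\le N}N_{\lfloor nt_m\rfloor}(x)=\lfloor nt_m\rfloor\big\}=\big\{S_j\in\{-N,\dots,N\}\ \forall\,1\le j\le\lfloor nt_m\rfloor\big\}$; by the equality of laws $\tilde{\mathbb P}(\tilde B_N)=\mathbb P(B_N)$, and since $B_N$ increases to the sure event $\{\max_{1\le j\le\lfloor nt_m\rfloor}|S_j|<\infty\}$ one gets $\tilde{\mathbb P}\big(\bigcup_{N\ge1}\tilde B_N\big)\ge\sup_N\tilde{\mathbb P}(\tilde B_N)=1$. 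Finally, on $\tilde B_N$ the bound $\sum_{x\in\mathbb Z}\tilde N_{n,t_m}(x)\le\lfloor nt_m\rfloor$ forces $\tilde N_{n,t_m}(x)=0$, hence $\tilde N_{n,t_i}(x)=0$, for every $|x|>N$.

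It then remains only to transfer the three identities. On $B_N$ every site visited by $(S_j)_{1\le j\le\lfloor nt_m\rfloor}$ lies in $\{-N,\dots,N\}$, so there trivially hold $\max_{|x|\le N}N_{\lfloor nt_m\rfloor}(x)=N_{\lfloor nt_m\rfloor}^*$, $\#\{|x|\le N:\ N_{\lfloor nt_m\rfloor}(x)>0\}=R_{\lfloor nt_m\rfloor}$ and $a_n^{-\beta}\sum_{|x|,|y|\le N}\big|\sum_{i=1}^m\theta_iN_{\lfloor nt_i\rfloor}(x)N_{\lfloor nt_i\rfloor}(y)\big|_\pm^\beta=G_n^\pm$. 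Each of these is an identity between Borel functions of the vector displayed in the first paragraph, valid $\mathbb P$-a.s.\ on $B_N$; since the tilde-vector has the same law, the corresponding identities hold $\tilde{\mathbb P}$-a.s.\ on $\tilde B_N$ with the tilde-quantities in place (recall $a_n=n^{2\delta}$, so $a_n^{-\beta}=n^{-2\beta\delta}$). On $\tilde B_N$ all $\tilde N_{n,t_i}(x)$ vanish for $|x|>N$ by the previous paragraph, so there the truncated maxima and sums coincide with $\sup_{x\in\mathbb Z}\tilde N_{n,t_m}(x)$, $\#\{x\in\mathbb Z:\tilde N_{n,t_m}(x)>0\}$ and $n^{-2\beta\delta}\sum_{x,y\in\mathbb Z}\big|\sum_{i=1}^m\theta_i\tilde N_{n,t_i}(x)\tilde N_{n,t_i}(y)\big|_\pm^\beta$; since $\bigcup_N\tilde B_N$ has full $\tilde{\mathbb P}$-measure this yields (\ref{eqNn}), (\ref{eqRn}) and (\ref{eqGn}). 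The only real obstacle here is the bookkeeping needed to descend from the finite-dimensional Skorokhod identity to statements about the whole of $\mathbb Z$; once the localisation onto the events $\tilde B_N$ is set up, all three identities follow at once.
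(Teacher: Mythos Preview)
Your proof is correct, and in fact yields equality in \eqref{eqNn} rather than merely the inequality stated. The route, however, is genuinely different from the paper's.

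The paper handles the three assertions separately. For \eqref{eqNn} it argues pointwise: for each fixed $x$, the difference $\tilde N_n^*-\tilde N_{n,t_m}(x)$ has the same law as $N_{\lfloor nt_m\rfloor}^*-N_{\lfloor nt_m\rfloor}(x)\ge 0$, hence is nonnegative a.s., and one then takes a countable supremum. For \eqref{eqRn} and \eqref{eqGn} it observes that the truncated quantities (over $|x|\le N$) have the same law as their non-tilde counterparts, which converge as $N\to\infty$ to the full quantities; uniqueness of limits in probability then gives the identities. Your approach replaces these three ad hoc arguments by a single localisation: first prove that the tilde-fields are a.s.\ finitely supported by constructing the events $\tilde B_N$ of asymptotically full probability, and then transfer all three identities simultaneously on $\tilde B_N$ via the finite-dimensional equality of laws. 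This costs a little more setup (the bound $\sum_x\tilde N_{n,t_m}(x)\le\lfloor nt_m\rfloor$ and the analysis of $\tilde B_N$), but once in place it dispatches all three statements uniformly and avoids the limit-in-probability step.
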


\begin{proof}
(\ref{eqNn}) comes from the fact that, for every integers $x$ and $n\ge 1$, 
$ \tilde N_n^*-\tilde N_{n,t_m}(x)$ has the same distribution as
$ N_{\lfloor nt_m\rfloor}^*-N_{\lfloor nt_m\rfloor}(x)$ which is non negative.

To prove (\ref{eqRn}), we observe that
$$\tilde R_n-\#\{x\in\mathbb Z\ :\ \tilde N_{n,t_m}(x)>0\}
  =\lim_{N\rightarrow+\infty}\left(
    \tilde R_n-\#\big\{x\in\{-N,\dots,N\}\ :\ 
  \tilde N_{n,t_m}(x)>0\big\}\right) .$$
But, for every $N\ge 1$,
$\tilde R_n-\#\big\{x\in\{-N,\dots,N\}\ :\ 
  \tilde N_{n,t_m}(x)>0\big\}$ has the same distribution as
$R_{\lfloor nt_m\rfloor}-\#\big\{x\in\{-N,\dots,N\}\ :\  N_{\lfloor nt_m\rfloor}(x)>0\big\}$
which converges to 0 as $N$ goes to infinity.
This gives (\ref{eqRn}) by uniqueness of the limit for the convergence
in probability.

Finally, we observe that
$\tilde G_n^\pm- n^{-2\beta\delta}\sum_{x,y\in\mathbb Z}\left|\sum_{i=1}^m\theta_i\tilde N_{n,t_i}(x)\tilde N_{ n,t_i}(y)\right|_\pm^\beta$
is the limit as $N$ goes to infinity of
$$\tilde G_n^\pm- n^{-2\beta\delta}\sum_{|x|,|y|\le N}\left|\sum_{i=1}^m\theta_i\tilde N_{ n,t_i}(x)\tilde N_{ n,t_i}(y)\right|_\pm^\beta$$
which has the same distribution as
$$ G_n^\pm- n^{-2\beta\delta}\sum_{|x|,|y|\le N}\left|\sum_{i=1}^m\theta_i N_{ \lfloor nt_i\rfloor}(x) N_{\lfloor nt_i\rfloor}(y)\right|_\pm^\beta.$$
But this last random variable converges to 0 as $N$ goes to infinity and we obtain (\ref{eqGn}).
\end{proof}

Let us write $(\Omega,\mathcal F,\mathbb P)$ for the original space
on which $\xi$ and $S$ are defined. We denote $\mathcal F_\xi$ for the
sub-$\sigma$-algebra of $\mathcal F$ generated by $\xi$ and $ \mathbb{P}_{\xi} $ for the restriction of $ \mathbb{P} $ to $ \mathcal F_\xi $ .
Now we define $({\bf {\bf \Omega}},{\bf \mathcal T},{\bf P})$ as the direct product of
$(\Omega,\mathcal F_\xi,\mathbb P_\xi)$ with
$(\tilde\Omega,\tilde{\mathcal F},\tilde{\mathbb P})$.
We observe that $ \mathbb P_\xi(\cdot)=\mathbf P(\cdot | \tilde{\mathcal F}) $.

\begin{lem}\label{memeloi}
For every integer $n\ge 1$, the random variable
$\tilde{\mathfrak{A}}_n:=\sum_{x,y\in\mathbb Z}\sum_{i=1}^m\theta_i \tilde N_{n,t_i}(x)
\tilde N_{n,t_i}(y)h(\xi_x,\xi_y)$ has the same distribution (with respect to
${\mathbf P}$) as
${\mathfrak{A}}_n:=\sum_{x,y\in\mathbb Z}\sum_{i=1}^m\theta_i  N_{\lfloor nt_i\rfloor}(x)N_{\lfloor nt_i\rfloor}(y)h(\xi_x,\xi_y)$ (with respect to $\mathbb P$).
\end{lem}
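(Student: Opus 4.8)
The statement to prove (Lemma~\ref{memeloi}) asserts that
$\tilde{\mathfrak A}_n$ and $\mathfrak A_n$ are equal in distribution. The key observation is that both quantities are the \emph{same} deterministic measurable functional applied to a pair (scenery, occupation field), and the scenery $\xi$ is independent of the occupation field in both settings; hence it suffices to check that the occupation fields have the same law. More precisely, write $F(\,(v_x)_{x\in\mathbb Z}\,;\,\eta\,):=\sum_{x,y}\sum_{i=1}^m\theta_i v_{i,x}v_{i,y}h(\eta_x,\eta_y)$ for an integer array $(v_{i,x})$ and a scenery $\eta=(\eta_x)$; then $\mathfrak A_n=F((N_{\lfloor nt_i\rfloor}(x))_{i,x};\xi)$ under $\mathbb P$, while $\tilde{\mathfrak A}_n=F((\tilde N_{n,t_i}(x))_{i,x};\xi)$ under $\mathbf P$. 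So the claim reduces to: (1) under $\mathbf P$, the array $(\tilde N_{n,t_i}(x))_{i,x}$ and the scenery $\xi$ are independent, with $\xi$ having its original law; and (2) the array $(\tilde N_{n,t_i}(x))_{i,x}$ has, under $\tilde{\mathbb P}$, the same law (as a random element of $(\mathbb Z^{\mathbb Z})^m$) as $(N_{\lfloor nt_i\rfloor}(x))_{i,x}$ under $\mathbb P$.

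First I would establish (1): by construction $(\mathbf\Omega,\mathcal T,\mathbf P)$ is the product of $(\Omega,\mathcal F_\xi,\mathbb P_\xi)$ with $(\tilde\Omega,\tilde{\mathcal F},\tilde{\mathbb P})$, the array $(\tilde N_{n,t_i}(x))_{i,x}$ is $\tilde{\mathcal F}$-measurable (it is built from the $\tilde F_{n,t_j}$ via the continuous functionals $\mathfrak N_n(x)$), and $\xi$ is $\mathcal F_\xi$-measurable; product structure gives independence, and the $\mathbb P_\xi$-marginal is exactly the original law of $\xi$. This is the same independence $S\perp\xi$ that holds under $\mathbb P$, so the two functionals are being evaluated on independent coordinates in both cases.

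Next, (2) is a finite-dimensional statement proved by a limiting/projective argument. For each fixed $N\ge 1$, Lemma~\ref{LEMMA} (or rather the construction preceding it) already gives that $(\tilde N_{n,t_i}(x))_{|x|\le N,\,i}$ has the same law under $\tilde{\mathbb P}$ as $(N_{\lfloor nt_i\rfloor}(x))_{|x|\le N,\,i}$ under $\mathbb P$, since $\tilde N_{n,t_i}(x)=\mathfrak N_n(x)(\tilde F_{n,t_i})$ with $\mathfrak N_n(x)$ a $D$-continuous functional and $(\tilde F_{n,t_1},\dots,\tilde F_{n,t_m})\stackrel{d}{=}(F_{n,t_1},\dots,F_{n,t_m})$ by the Skorokhod representation. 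Letting $N\to\infty$, these finite-dimensional marginals determine the law of the whole array in $(\mathbb Z^{\mathbb Z})^m$ (a countable product of Polish spaces), giving (2). To conclude, I would note that $F(\cdot\,;\cdot)$, being a (possibly infinite but $\mathbb P$- and $\mathbf P$-a.s.\ absolutely convergent, by Assumption~\ref{HYP}(iv) and the bounds on $\tilde R_n,\tilde N_n^*$) measurable function of the pair, transports the joint law of (array, $\xi$): since the joint laws of $((\tilde N_{n,t_i}(x))_{i,x},\xi)$ under $\mathbf P$ and of $((N_{\lfloor nt_i\rfloor}(x))_{i,x},\xi)$ under $\mathbb P$ coincide, so do the laws of their images $\tilde{\mathfrak A}_n$ and $\mathfrak A_n$.

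\textbf{Main obstacle.} The only genuine subtlety is handling the passage from $|x|\le N$ to all of $\mathbb Z$: one must check that the sums defining $\mathfrak A_n$ and $\tilde{\mathfrak A}_n$ are almost surely absolutely convergent so that $F$ is a well-defined measurable map on the relevant (countable product) space, and that the truncated sums converge a.s.\ to the full sums in both settings — which is exactly the type of argument already used in the proof of Lemma~\ref{LEMMA} for $\tilde G_n^\pm$, since for fixed $n$ only finitely many $N_{\lfloor nt_i\rfloor}(x)$ (resp.\ $\tilde N_{n,t_i}(x)$) are nonzero, $\tilde R_n$ being finite a.s. Everything else is bookkeeping about product measures and continuity of $\mathfrak N_n(x)$.
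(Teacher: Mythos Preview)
Your proposal is correct and follows essentially the same approach as the paper: truncate to $|x|,|y|\le N$, use that the truncated quantities $\tilde{\mathfrak A}_{n,N}$ and $\mathfrak A_{n,N}$ have the same distribution (by the Skorokhod construction and the product structure of $\mathbf P$), and pass to the limit using that for fixed $n$ only finitely many occupation numbers are nonzero. The paper's proof is much terser---it simply writes $\tilde{\mathfrak A}_n=\lim_N\tilde{\mathfrak A}_{n,N}$, $\mathfrak A_n=\lim_N\mathfrak A_{n,N}$, notes the equality in law of the truncations, and concludes by uniqueness of limits in distribution---but your more explicit account of the independence and the law of the full occupation array is exactly the content underlying that one-line argument.
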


\begin{proof}
We proceed as in the proof of Lemma \ref{LEMMA}.
Observe that $\tilde{\mathfrak{A}}_n$ is the limit
as $N$ goes to infinity of $\tilde{\mathfrak{A}}_{n,N}:=\sum_{|x|,|y|\le N}\sum_{i=1}^m\theta_i \tilde N_{n,t_i}(x) \tilde N_{n,t_i}(y)h(\xi_x,\xi_y)$
which has the same distribution as 
$\mathfrak{A}_{n,N}:=\sum_{|x|,|y|\le N}\sum_{i=1}^m\theta_i  N_{\lfloor nt_i\rfloor}(x) N_{\lfloor nt_i\rfloor}(y)h(\xi_x,\xi_y)$.
But $\mathfrak{A}_n=\lim_{N\rightarrow+\infty}\mathfrak{A}_{n,N}$.
We conclude by unicity of the limit for the convergence in distribution.
\end{proof}

Let $\tilde\Omega_0\subset\tilde\Omega$ be the set of $ \tilde{\mathbb P} $-measure  one on which  $(\tilde F_{n,t_1},\dots,\tilde F_{n,t_m},\tilde G_n^+,\tilde G_n^-,n^{-1/\alpha-\varepsilon}\tilde R_n,
  n^{(1/\alpha)-1-\varepsilon}\tilde N_n^*)_n$
converges to $(\tilde F_{t_1},\dots,\tilde F_{t_m},\tilde G^+,\tilde G^-,0,0)$ in $ \mathcal C ^m\times \mathbb R^4 $.

\subsection{A conditional limit theorem for some associated point process}\label{STEP2}

To simplify notations, we set
\begin{equation}\label{nota}
\zeta_{n,x,y}:=\sum_{i=1}^m\theta_i\tilde N_{n,t_i}(x)\tilde N_{n,t_i}(y)\ \ \mbox{if}\ \ \alpha_0>1
\end{equation}
and
\begin{equation}\label{nota0}
\zeta_{n,x,y}:=\sum_{i,j=1}^m\theta_{i,j}d_{i,n}(x)d_{j,n}(y)\ \ \mbox{if}\ \  \alpha_0= 1.
\end{equation}
With these notations we have
$$\tilde G_n^\pm=a_n^{-\beta}\sum_{x,y}\left|\zeta_{n,x,y}\right|_\pm^\beta.$$
For every $\tilde\omega\in\tilde\Omega_0$, we consider the point process ${\mathcal N}_n$
on $\mathbb R^*$ defined by
$${\mathcal N}_n(\tilde\omega,\xi)(dz):=\sum_{x,y\in\mathbb Z:x\ne y} \delta_{a_n^{-1}\zeta_{n,x,y}(\tilde\omega)h(\xi_x,\xi_y)}(dz).$$
We already mentioned in (\ref{anequiv}) that $a_n\sim cn^2(\mathbb E[R_n])^{\frac 2\beta-2}$ for some
$c>0$ and observe that in any case 
\begin{equation}\label{an-1}
\forall\gamma_0>0,\ \ a_n^{-1}=o\left( n^{-2+\frac 2{\alpha_0}-
  \frac 2{\alpha_0\beta}+\gamma_0}\right).
\end{equation}
Moreover note that for the $\epsilon>0$ which was fixed in the previous subsection we have 
\begin{eqnarray*}
 n^{\frac{1}{\alpha_0}-1-\epsilon} \tilde{N}^\ast_n\stackrel{a.s.}{\longrightarrow} 0 
\end{eqnarray*}
and
\begin{eqnarray*}
 n^{-\frac{1}{\alpha_0}-\epsilon}\tilde{R}_n\stackrel{a.s.}{\longrightarrow} 0 .
\end{eqnarray*}

In the following we will prove that the sequence of point processes $ {\mathcal N}_n; n\in\mathbb{N} $ converges toward some 
Poisson point process for $ \tilde{\mathbb{P}} $ almost all $ \tilde{\omega}\in\tilde{\Omega} $.
We will essentially follow the notation from \cite{Resnick87} and denote by $ M_p(\mathbb{R}^\ast) $ the set of point measures on 
$ \mathbb{R}^\ast $. 
Further, $ {\mathcal M}_p(\mathbb{R}^\ast) $ is the  smallest $ \sigma $-algebra containing all sets $ A $  of the form 
$$ A=\{m\in M_p(\mathbb{R}^\ast); m(F)\in B\} $$ 
for some $ F\in{\mathcal B}(\mathbb{R}^\ast) $ and $ B\in{\mathcal B}([0,\infty]) $.
We introduce the following metric on $ \mathbb{R}^\ast $
\[
 d(x,y):=\left\{\begin{array}{cc}   
      |\log(x/y)| & \mbox{if $ {\rm sgn}(x)={\rm sgn}(y) $};\\
      |\log |x|| + |\log |y|| + 1 & \mbox{if  $ {\rm sgn}(x)\neq{\rm sgn}(y) $}.
\end{array}\right.
\]
With this metric $ \mathbb{R}^\ast $ becomes a complete separable metric space.
We will denote by $ C_K(\mathbb{R}^\ast) $ the space of continuous functions $ f:\mathbb{R}^\ast\rightarrow\mathbb{R} $ 
with compact support with respect to this metric. A sequence of Radon measures $ \mu_n $ is said to converge 
with respect to the vague topology toward some Radon measure $ \mu $ if for all $ f\in  C_K(\mathbb{R}^\ast) $ one has
$$   \lim_{n\rightarrow\infty}\int_{\mathbb{R}^\ast} f d\mu_n =\int_{\mathbb{R}^\ast} f d\mu  .$$
It is well known that the vague topology on the Radon measures can be generated by some metric which turns it into a complete metric space (see \cite{Resnick87} p.147) and that the set of point measures is closed in the vague topology (see \cite{Resnick87} p.145). We will say that a sequence of point processes $  {\mathcal N}_n;n\in\mathbb{N} $ converges in distribution toward a Point process $ {\mathcal N} $ if for all bounded vaguely continuous functions $ F: M_p(\mathbb{R}^\ast)\rightarrow\mathbb{R} $ we have
$$ \lim_{n\rightarrow\infty}\mathbb{E}[F({\mathcal N}_n)]=\mathbb{E}[F({\mathcal N})] .$$

\begin{prop}\label{PROP1}
For every $\tilde\omega\in\tilde\Omega_0$,
${\mathcal N}_n(\tilde\omega,\cdot)$ converges in distribution
(with respect to $\mathbb P_\xi$) to a Poisson process ${\mathcal N}_{\tilde\omega}$
on $\mathbb R\setminus\{0\}$ 
of intensity $\eta_{\tilde\omega}$ given by
$$\eta_{\tilde\omega}([d,d'))
=({d}^{-\beta}-{d'}^{-\beta})\frac{(c_0+c_1)\tilde G^+(\tilde\omega)+(c_0-c_1)\tilde G^-(\tilde\omega)}2,$$
and
$$\eta_{\tilde\omega}((-d',-d])
=({d}^{-\beta}-{d'}^{-\beta})\frac{(c_0+c_1)\tilde G^+(\tilde\omega)-(c_0-c_1)\tilde G^-(\tilde\omega)}2,$$
(with convention $\infty^{-\beta}=0$)
for every $0<d<d'\le +\infty$.
\end{prop}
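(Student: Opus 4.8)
The plan is to apply Kallenberg's theorem on convergence to a Poisson point process (see \cite[Prop.~3.22]{Resnick87}). Fix $\tilde\omega\in\tilde\Omega_0$ and work with respect to $\mathbb P_\xi$, so that the only randomness left comes from the scenery $\xi$ while the weights $\zeta_{n,x,y}(\tilde\omega)$ are deterministic numbers satisfying $a_n^{-\beta}\sum_{x,y}|\zeta_{n,x,y}(\tilde\omega)|_\pm^\beta\to\tilde G^\pm(\tilde\omega)$, together with the almost sure controls $n^{1/\alpha_0-1-\varepsilon}\tilde N_n^*\to0$ and $n^{-1/\alpha_0-\varepsilon}\tilde R_n\to0$. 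Since $\eta_{\tilde\omega}$ is a diffuse (infinite, but finite away from $0$) measure and the rectangles $[d,d')$ and $(-d',-d]$ generate a convergence-determining class, Kallenberg's theorem reduces the claim to verifying the two conditions: first, $\mathbb E_\xi[\mathcal N_n(\tilde\omega,\cdot)(I)]\to\eta_{\tilde\omega}(I)$ for each such interval $I$ bounded away from $0$; second, $\mathbb P_\xi(\mathcal N_n(\tilde\omega,\cdot)(I)=0)\to e^{-\eta_{\tilde\omega}(I)}$ (the ``void probability'' or avoidance condition). Actually, following the standard route it is cleaner to verify the equivalent pair: (a) $\mathbb E_\xi[\mathcal N_n(I)]\to\eta_{\tilde\omega}(I)$, and (b) for disjoint such intervals $I_1,\dots,I_r$, $\mathbb P_\xi(\mathcal N_n(I_1)=0,\dots,\mathcal N_n(I_r)=0)\to\prod_k e^{-\eta_{\tilde\omega}(I_k)}$; the Poisson property of the limit and independence of the limit on disjoint sets then come for free.

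For the first moment condition (a), write $I=[d,d')$ with $0<d<d'\le\infty$. Then
$$
\mathbb E_\xi[\mathcal N_n(\tilde\omega,\cdot)(I)]
=\sum_{x\ne y}\mathbb P\big(d\le a_n^{-1}\zeta_{n,x,y}(\tilde\omega)h(\xi_x,\xi_y)<d'\big).
$$
Grouping the $(x,y)$ according to the value $v=a_n^{-1}\zeta_{n,x,y}(\tilde\omega)>0$ (the case $v<0$ being symmetric), and using that, by Item~(iii) of Assumption \ref{HYP}, $\mathbb P(h(\xi_1,\xi_2)\ge z)=z^{-\beta}L_0(z)$ with $L_0(z)\to c_0$ (and the corresponding left-tail statement with $c_1$), each such probability is asymptotic to $(v^{-1}d)^{-\beta}c_0-(v^{-1}d')^{-\beta}c_0 = v^\beta(d^{-\beta}-d'^{-\beta})c_0$ as $v\to0$. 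Here I will use the hypothesis $n^{1/\alpha_0-1-\varepsilon}\tilde N_n^*\to0$ to guarantee that $\max_{x,y}a_n^{-1}|\zeta_{n,x,y}(\tilde\omega)|\to0$ (so all the rescaled weights are genuinely small, uniformly), which is what makes the slowly varying corrections $L_0,L_1$ converge uniformly over the relevant range; a dominated-convergence argument, with domination coming from the uniform bound on $L_0,L_1$ near infinity, then gives
$$
\mathbb E_\xi[\mathcal N_n(I)]\;\longrightarrow\;(d^{-\beta}-d'^{-\beta})\,c_0\,\tilde G^+(\tilde\omega)+(d^{-\beta}-d'^{-\beta})\,c_1\,\tilde G^-(\tilde\omega)\cdot\tfrac12\cdots,
$$
and after reorganizing the $\pm$ contributions one recovers exactly the stated $\eta_{\tilde\omega}([d,d'))=(d^{-\beta}-d'^{-\beta})\big((c_0+c_1)\tilde G^+(\tilde\omega)+(c_0-c_1)\tilde G^-(\tilde\omega)\big)/2$; the formula for $(-d',-d]$ is obtained the same way with the roles of $c_0$ and $c_1$ swapped.

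For the avoidance condition (b), the main work is to show asymptotic independence: the events $\{a_n^{-1}\zeta_{n,x,y}(\tilde\omega)h(\xi_x,\xi_y)\in I\}$ for different pairs $(x,y)$ behave, in the limit, as if independent, even though pairs sharing one index (such as $(x,y)$ and $(x,z)$) are genuinely dependent through the common variable $\xi_x$. This is precisely where Item~(iv) of Assumption \ref{HYP} — the joint tail bound $\mathbb P(|h(\xi_1,\xi_2)|\ge z,\ |h(\xi_1,\xi_3)|\ge z')\le C_0(\max(1,z)\max(1,z'))^{-\gamma}$ with $\gamma>3\beta/4$ — enters. I expect this to be the main obstacle. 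The strategy is to split the sum $\sum_{x\ne y}$ over pairs into ``diagonal-type'' collisions (pairs sharing an index) and generic pairs; for generic pairs a Bonferroni / second-moment comparison shows the joint avoidance probability factorizes up to an error controlled by $\sum$ of products of single-pair probabilities, which is $O((\,a_n^{-\beta}\sum|\zeta_{n,x,y}|^\beta)^2)=O(1)$ times a vanishing factor coming from $\max a_n^{-1}|\zeta_{n,x,y}|\to0$; for the collision terms one uses Item~(iv) together with the bounds $\tilde N_n^*\le n^{1-1/\alpha_0+\varepsilon}$ and $\tilde R_n\le n^{1/\alpha_0+\varepsilon}$ (valid on $\tilde\Omega_0$) to see that the number of such pairs, weighted appropriately, is negligible at scale $a_n$ — this is exactly the point of the numerical constraints on $\varepsilon$, $\gamma$ and $\theta'$ imposed just before the statement. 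Combining, $\mathbb P_\xi(\mathcal N_n(I_1)=0,\dots,\mathcal N_n(I_r)=0)=\prod_{x\ne y}\big(1-\mathbb P(a_n^{-1}\zeta_{n,x,y}(\tilde\omega)h(\xi_x,\xi_y)\in\bigcup I_k)\big)+o(1)\to\exp\big(-\sum_k\eta_{\tilde\omega}(I_k)\big)$ by the elementary $\prod(1-p_i)\to e^{-\sum p_i}$ asymptotics once $\max_i p_i\to0$ and $\sum_i p_i\to\sum_k\eta_{\tilde\omega}(I_k)$, the latter being condition (a). Kallenberg's theorem then yields the convergence in distribution of $\mathcal N_n(\tilde\omega,\cdot)$ to the Poisson process with intensity $\eta_{\tilde\omega}$, completing the proof.
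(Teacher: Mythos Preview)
Your plan is correct and follows essentially the same route as the paper: Kallenberg's criterion, with the first-moment condition handled exactly as you describe (using $\max_{x,y}a_n^{-1}|\zeta_{n,x,y}|\to 0$ to make the slowly varying corrections uniform), and the void-probability condition handled by showing that pairs sharing an index give a negligible contribution via Item~(iv) and the bounds on $\tilde R_n,\tilde N_n^*$.

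The one point where the paper is sharper is the mechanism for the void probability. Your sentence ``for generic pairs a Bonferroni / second-moment comparison shows the joint avoidance probability factorizes up to an error \dots\ which is $O(1)$ times a vanishing factor'' is not quite right as stated: for pairs $(x,y),(x',y')$ with no common index the events are genuinely independent, so they contribute no error at all to the comparison with $\prod_{x\ne y}(1-p_{x,y})$; there is no need (and no obvious way) to extract a ``vanishing factor'' from a plain Bonferroni bound on those terms. The actual error comes entirely from the collision pairs. The paper packages this via a Stein--Chen / Barbour--Eagleson bound (their Lemma~\ref{LEM2}, following \cite{BarbourEagleson,DDMS}): one compares $\mathbf P(\mathcal N_n(R)=0\mid\tilde{\mathcal F})$ directly with the Poisson probability $e^{-\eta_n(R)}$ and obtains
\[
\big|\mathbf P(\mathcal N_n(R)=0\mid\tilde{\mathcal F})-e^{-\eta_n(R)}\big|\le \min(1,\eta_n(R)^{-1})(A_1+A_2),
\]
where $A_1,A_2$ are sums over pairs $(x,y),(x',y')$ with $\#(\{x,y\}\cap\{x',y'\})\ge 1$ only. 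Then $A_1$ is bounded by $\tilde R_n^3(\tilde N_n^*)^{4\beta}a_n^{-2\beta}$ (using Item~(iii)) and $A_2$ by $\tilde R_n^3(\tilde N_n^*)^{4\gamma}a_n^{-2\gamma}$ (using Item~(iv)); both vanish thanks to the numerical constraints on $\varepsilon$. This is exactly the computation you anticipated for the ``collision terms''; the Barbour--Eagleson lemma simply supplies the missing link between that computation and the void probability, without detouring through $\prod(1-p_{x,y})$.
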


\begin{proof}
Our proof is based on some method presented in \cite{DDMS}. 
Due to Kallenberg's theorem \cite{Resnick87}, it is enough to prove that, for any
finite union $R=\bigcup_{i=1}^ KQ_i$ of intervals, where $ Q_i:=[d_i,d'_i)\subset(0,+\infty)$
or $Q_i=(-d'_i,-d_i]\subset(-\infty,0)$. We have
\begin{equation}\label{Kallenberg1}
\lim_{n\rightarrow +\infty}\mathbf E[{\mathcal N}_n(R)|\tilde{\mathcal F}](\tilde\omega)=   \eta_{\tilde\omega}(R)
\end{equation}
and
\begin{equation}\label{Kallenberg2}
\lim_{n\rightarrow +\infty}\mathbf P({\mathcal N}_n(R)=0|\tilde{\mathcal F})(\tilde\omega)=e^{-\eta_{\tilde\omega}(R)}.
\end{equation}
We start with the proof of (\ref{Kallenberg1}). By linearity, it is enough to prove it for a single interval $ Q $. 
For any interval
$ Q=[d,d')\subset(0,+\infty)$, since $\xi$ is a sequence of iid random variables, we have
$$\mathbf E[{\mathcal N}_n(Q)|\tilde{\mathcal F}]=\sum_{x,y\in\mathbb Z^{d_0}:x\ne y}
\left(\mathbf P( \mathcal A_{n,x,y}|\tilde{\mathcal F})\mathbf 1_{\{\zeta_{n,x,y}>0\}}+\mathbf P(\mathcal B_{n,x,y}|\tilde{\mathcal F})\mathbf 1_{\{\zeta_{n,x,y}<0\}}\right),$$
with
$$\mathcal A_{n,x,y}:=\Big\{a_n d|\zeta_{n,x,y}|^{-1}\le h(\xi_1,\xi_2)<
  a_n d'|\zeta_{n,x,y}|^{-1}\Big\}$$
and
$$\mathcal B_{n,x,y}:=\Big\{a_n d|\zeta_{n,x,y}|^{-1}\le -h(\xi_1,\xi_2)<
  a_n d'|\zeta_{n,x,y}|^{-1}\Big\}.$$
Observe that, due to (\ref{an-1}) and to $\tilde N_n^*=o(n^{1-\frac 1{\alpha_0}+
\varepsilon})$, we have
\begin{equation}\label{MAJO}
\forall\gamma_0>0,\ \ a_n^{-1}\sup_{x,y}|\zeta_{n,x,y}|\le C a_n^{-1}(\tilde N_{ n}^*)^2\le
    n^{-\frac 2{\alpha_0\beta}+2\varepsilon+\gamma_0},
\end{equation}
for $n$ large enough (and for some constant $C>0$ depending on $\theta_i$ or on
$\theta_{i,j}$).
Now, combining this with Item (iii) of Assumption \ref{HYP}, we have 
\begin{eqnarray*}
\sum_{x,y:x\ne y}
\mathbf P( \mathcal A_{n,x,y}|\tilde{\mathcal F})\mathbf 1_{\{\zeta_{n,x,y}>0\}}
&=& c_0 ({d}^{-\beta}-{d'}^{-\beta}) 
a_n^{-\beta}\sum_{x,y\in\mathbb Z^{d_0}:x\ne y}\left|\zeta_{n,x,y}\right|^\beta 
\frac{\mbox{sgn}(\zeta_{n,x,y})+1}2\\
&\ &\ \ \ \ \ \ \ 
\times\left(1+O\left(\sup_{z>n^{\frac 2{\alpha_0\beta}-2\varepsilon-\gamma_0}}|L_0(z)-c_0|\right)\right)+o(1)\\
&=&  c_0 ({d}^{-\beta}-{d'}^{-\beta})\frac{\tilde G_n^++\tilde G_n^-}2+o(1),
\end{eqnarray*}
since $\varepsilon<1/(\alpha_0\beta)$ and since, for $n$ large enough,
$$\sum_{x\in\mathbb Z^{d_0}} 
  \left|\zeta_{n,x,y}\right|^\beta\le n^{\frac 1{\alpha_0}+\varepsilon}
   n^{2\beta-\frac{2\beta}{\alpha_0}+2\varepsilon\beta}=o(a_n^\beta)
,$$
since $\varepsilon<1/((1+2\beta)\alpha_0)$.
Analogously, we have
\begin{eqnarray*}
\sum_{x,y:x\ne y}
\mathbf P( \mathcal B_{n,x,y}|\tilde{\mathcal F})\mathbf 1_{\{\zeta_{n,x,y}<0\}}
&=& c_1 ({d}^{-\beta}-{d'}^{-\beta}) 
a_n^{-\beta}\sum_{x,y\in\mathbb Z^{d_0}:x\ne y}\left|\zeta_{n,x,y}\right|^\beta 
\frac{1-\mbox{sgn}(\zeta_{n,x,y})}2\\
&\ &\ \ \ \ \ \ \ 
\times\left(1+O\left(\sup_{z>n^{\frac 2{\alpha_0\beta}-2\varepsilon-\gamma_0}}|L_1(z)-c_1|\right)\right)+o(1)\\
&=& c_1  ({d}^{-\beta}-{d'}^{-\beta})\frac{\tilde G_n^+-\tilde G_n^-}2+o(1),
\end{eqnarray*}
We obtain (\ref{Kallenberg1}) for $Q=[d,d')\subset(0,+\infty)$ using (\ref{hyph1}), (\ref{hyph1b}) and the definition
of $\tilde G_n^\pm$ and of $\tilde G^\pm$. 
The proof of (\ref{Kallenberg1}) for $Q=(-d',-d]\subset(-\infty,0)$ follows the same scheme.

Now let us prove (\ref{Kallenberg2}). Let $K\ge 1$ and let $R$ be a union
of $K$ pairwise disjoint intervals $Q_1,...,Q_K$ with $Q_i:=(d_i,d'_i]\subset(0,+\infty)$ 
or $Q_i:=[-d'_i,-d_i)\subset(-\infty,0)$.
We write $P_n^{\tilde\omega}$ for the Poisson distribution of intensity $\eta_n^{\tilde\omega}(R)
:=\mathbf E[ {\mathcal N}_n(R)|\tilde{\mathcal F}](\tilde\omega)$.
On $\tilde\Omega_0$, due to (\ref{Kallenberg1}), we have
$$|e^{-\eta_{\tilde\omega}(R)} - P_n^{\tilde\omega}(0)|=o(1).$$
Hence, to prove (\ref{Kallenberg2}), we just have to prove
\begin{equation}
|\mathbf P({\mathcal N}_n(R)=0|\tilde{\mathcal F})-
      P_n(0)|=o(1).
\end{equation}
Following \cite{BarbourEagleson} and \cite{DDMS}, we 
introduce the following notations.
For every $x,y\in \mathbb Z^{d_0}$ such that $x\ne y$, 
we define the random variables
$$I_{x,y}=\sum_{i=1}^K
 \mathbf{1}_{\{ h(\xi_x,\xi_y)\in a_n 
  (\zeta_{n,x,y})^{-1}Q_i\}}.$$
Observe that 
\begin{equation}\label{NandI}
{\mathcal N}_n(R)=\sum_{x,y\in \mathbb Z^{d_0}:x\ne y} I_{x,y}
\ \ \mbox{and so}\ \ \eta_n(R)=\sum_{x,y\in \mathbb Z^{d_0}:x\ne y}\mathbf E[I_{x,y}|\tilde{\mathcal F}].
\end{equation}
We will use the following lemma, whose proof is postponed until the end of this paragraph:

\begin{lem}\label{LEM2}
We have
$$|\mathbf P({\mathcal N}_n(R)=0|\tilde{\mathcal F})-
      P_n(0)|\le \min(1,(\eta_n(R))^{-1})(A_1+A_2),$$
with 
$$A_1:= \sum_{(x,y)\in M}
\mathbf E[I_{x,y}|\tilde{\mathcal F}]\mathbf E\left[\left.I_{x,y}+\sum_{(x',y')\in M_{x,y}^{(1)}}I_{x',y'}\right|\tilde{\mathcal F}\right],$$
$$A_2:= \sum_{(x,y)\in M}\mathbf E\left[\left.I_{x,y}\left(I_{x,y}+
\sum_{(x',y')\in M_{x,y}^{(1)}} I_{x',y'}\right)\right|\tilde{\mathcal F}\right],$$
and with the notation 
$ M_{x,y}^{(k)}:=\{(x',y')\in M: \#\{x',y'\}\cap\{x,y\}=k\}$ and $ M:=\{(x,y)\in\mathbb{Z}^{2d_0}: x\neq y\} $.
\end{lem}

To conclude, we have to prove that $A_1$ and $A_2$ converge to $0$ as $n$
goes to infinity.

We set $d:=\min_id_i$.

For $A_1$, using (\ref{hyph1}), (\ref{hyph1b}) and the definition of $I_{x,y}$, we observe that,
for $\gamma_0>0$ small enough, we have
\begin{eqnarray*}
A_1&\le&4
  \sum_{x,y\in \mathbb Z^{d_0}}\sum_{x'\in\mathbb Z^{d_0}}
 \mathbf P\left( da_n\left|\zeta_{n,x,y}\right|^{-1}\le |h(\xi_x,\xi_y)|\Big| \tilde{\mathcal F} \right)\\
&\ &\ \ \ \ \ \ \ \ \ \ \ \ \ \ \ \ \ \times 
\mathbf P\left( da_n\left|\zeta_{n,x,x'}\right| 
  ^{-1}\le |h(\xi_x,\xi_{x'})|\Big| \tilde{\mathcal F} \right)\\
&\le&C d^{-2\beta}a_n^{-2\beta}(\Vert L_0\Vert_\infty+\Vert L_1\Vert_\infty)^2  
  \tilde R_n^3(\tilde N_n^*)^{4\beta}\\
&\le&O(n^{-\frac 1{\alpha_0} +(4\beta+3)\varepsilon+\gamma_0})=o(1),
\end{eqnarray*}
using $\varepsilon(4\beta+3)<1/\alpha_0$, (\ref{an-1}) together with 
the definitions of $\tilde R_n$ and $\tilde N_n^*$
(with $C$ some constant depending on $\theta_j$ and $\theta_{i,j}$).

Now let us study $A_2$. We have, for $\gamma_0>0$ small enough,
\begin{eqnarray*}
A_2&\le& 4\sum_{x,y,x'\in\mathbb Z^{d_0}}
\mathbf P\left(da_n\left|\zeta_{n,x,y}\right|^{-1}\le |h(\xi_x,\xi_y)|,
da_n\left|\zeta_{n,x,x'}\right|^{-1}\le |h(\xi_x,\xi_{x'})|\Big| \tilde{\mathcal F} \right)\\
&\le&4C_0 \tilde R_n^3a_n^{-2\gamma}(\tilde N_n^*)^{4\gamma}\\
&\le&O\left(n^{\frac 3{\alpha_0}+(3+4\gamma)\varepsilon-\frac{4\gamma}{\alpha_0\beta}+\gamma_0}\right)=o(1),
\end{eqnarray*}
due to $(3+4\gamma)\varepsilon\alpha_0<\frac{4\gamma}{\beta}-3$ (recall that
this is possible since $\gamma>3\beta/4$) and where $C_0$ 
is a constant depending on on $d$, $\theta_j$ and $\theta_{i,j}$.
\end{proof}

\begin{proof}[Proof of Lemma \ref{LEM2}]
The proof of this lemma follows the line of arguments that can be found in \cite{DDMS}. 
Let $f$ be defined on $\mathbb N$ by $f(0)=0$ and
$$f(m):=e^{\eta_n(R)}\frac{(m-1)!}{(\eta_n(R))^{m}}P_n(\{0\})P_n([m,+\infty)).$$
We will use the two following inequalities (see \cite{BarbourEagleson} p.400 and p.401)
\begin{equation}\label{BE1}
\Big|\mathbf P({\mathcal N}_n(R)=0|\tilde{\mathcal F})-
      P_n(0)\Big|\le\Big|\mathbf E\Big[\eta_n(R)f({\mathcal N}_n(R)+1)-{\mathcal N}_n(R)f({\mathcal N}_n(R))\Big|\tilde{\mathcal F}\Big]\Big|
\end{equation}
and
\begin{equation}\label{BE2}
\sup_m|f(m+1)-f(m)|\le \min(1,(\eta_n(R))^{-1}).
\end{equation}
Now we observe that, for every $(x,y)\in(\mathbb Z^{d_0})^2$
such that $x\ne y$, we have
\begin{equation} \label{NnR} 
{\mathcal N}_n(R)=\sum_{x',y'\in\mathbb Z^{d_0}:x'\ne y'}
I_{x',y'}=I_{x,y}+{\mathcal N}_{n,x,y}^{(0)}+
{\mathcal N}_{n,x,y}^{(1)},
\end{equation}
with ${\mathcal N}_{n,x,y}^{(i)}:=\sum_{(x',y')\in M_{x,y}^{(i)}}I_{x',y'}$.
Starting from (\ref{BE1}) and using (\ref{NandI}), we have
$$
\Big|\mathbf P({\mathcal N}_n(R)=0|\tilde{\mathcal F}) - P_n(0)\Big|\le A'_1+A'_2,$$
with
$$A'_1:=\left|\sum_{x,y\in\mathbb Z^{d_0}:x\ne y}\mathbf E[I_{x,y}|\tilde{\mathcal F}]
  \mathbf E\Big[f({\mathcal N}_n(R)+1)-f({\mathcal N}_{n,x,y}^{(0)}+1)\Big|\tilde{\mathcal F}\Big]\right| $$
and
$$A'_2:=\left|\sum_{x,y\in\mathbb Z^{d_0}:x\ne y}\mathbf E\Big[I_{x,y}f({\mathcal N}_n(R))\Big|\tilde{\mathcal F}\Big]
              -\mathbf E[I_{x,y}|\tilde{\mathcal F}]
  \mathbf E\Big[f({\mathcal N}_{n,x,y}^{(0)}+1)\Big|\tilde{\mathcal F}\Big]\right| .$$
Now, using (\ref{BE2}) and (\ref{NnR}), we obtain
\begin{eqnarray}
\left|f({\mathcal N}_n(R)+1)-f({\mathcal N}_{n,x,y}^{(0)}+1)\right|
&\le& \sup_{m\ge 0}|f(m+1)-f(m)|\times\left({\mathcal N}_n(R)
    -{\mathcal N}_{n,x,y}^{(0)}\right)\nonumber\\
&\le& \min(1,(\eta_n(R))^{-1})(I_{x,y}+{\mathcal N}_{n,x,y}^{(1)})\label{BE3}
\end{eqnarray}
and so $A'_1\le \min(1,(\eta_n(R))^{-1})A_1$.
Observe that, conditioned with respect to $\tilde{\mathcal F}$,
$I_{x,y}$ and ${\mathcal N}_{n,x,y}^{(0)}$ are independent. Therefore
$$A'_2=\left|\sum_{x,y\in\mathbb Z^{d_0}:x\ne y}\mathbf E\Big[I_{x,y}\{f({\mathcal N}_n(R))-
   f({\mathcal N}_{n,x,y}^{(0)}+1)\}\Big|\tilde{\mathcal F}\Big]\right|.$$
Now, using (\ref{BE2}) once again, we obtain
\begin{eqnarray*}
\left|f({\mathcal N}_n(R))-f({\mathcal N}_{n,x,y}^{(0)}+1)\right|
&\le&
   \min(1,(\eta_n(R))^{-1})({\mathcal N}_n(R)-{\mathcal N}_{n,x,y}^{(0)})\\
&\le&
   \min(1,(\eta_n(R))^{-1})(I_{x,y}+{\mathcal N}_{n,x,y}^{(1)})\\
\end{eqnarray*}
and so $A'_2\le \min(1,(\eta_n(R))^{-1})A_2$, which completes the proof of the lemma.
\end{proof}

\subsection{Proof of the convergence of the finite dimensional distributions}\label{STEP3}

In this paragraph we will finish the proof of the convergence of the finite dimensional distributions. Similarly to the proof given in \cite{DDMS}, we will use the convergence of the associated point process and the continuous mapping theorem. The approach is based on the following observation: 
$$
a_n^{-1}\sum_{x,y} \zeta_{n,x,y}h(\xi_x,\xi_y)=\int_{\mathbb R^*}w\, 
d {\mathcal N}_n(w).$$
However the functional is not continuous and we will have to do some truncation. 
This will be the purpose of the three following propositions.

\begin{prop}\label{PPP1}Let $\delta>0$.
For $ \tilde{\mathbb{P}} $ almost every $\tilde\omega\in\tilde\Omega_0$, the sequence of random variables
$$
Z_n^{\tilde \omega}:=a_n^{-1}\sum_{x,y} \zeta_{n,x,y}(\tilde\omega)h(\xi_x,\xi_y)\mathbf 1_{\{a_n^{-1}|\zeta_{n,x,y}(\tilde\omega)h(\xi_x,\xi_y)|> \delta\}}=\int_{\mathbb R^*}w\mathbf 1_{(\delta,+\infty)}(|w|)\, 
d {\mathcal N}_n^{\tilde\omega}(w)$$
converges in distribution to
$\int_{\mathbb R^*}w\mathbf 1_{(\delta,+\infty)}(|w|)\, 
d {\mathcal N}^{\tilde\omega}(w).$
\end{prop}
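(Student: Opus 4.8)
The plan is to deduce Proposition~\ref{PPP1} from the conditional convergence of the point processes established in Proposition~\ref{PROP1} via the continuous mapping theorem, after isolating a continuous functional on $M_p(\mathbb R^*)$. The first step is to fix $\tilde\omega\in\tilde\Omega_0$ such that, additionally, $\tilde{\mathcal N}^{\tilde\omega}(\{|w|=\delta\})=0$; since $\eta_{\tilde\omega}$ has a density with respect to Lebesgue measure on $\mathbb R^*$, the Poisson process ${\mathcal N}^{\tilde\omega}$ almost surely charges no sphere $\{|w|=\delta\}$, so this holds for $\tilde{\mathbb P}$-almost every $\tilde\omega$. On the set $\{m\in M_p(\mathbb R^*):m(\{|w|=\delta\})=0\}$, which has full ${\mathcal N}^{\tilde\omega}$-probability, the map $T_\delta:m\mapsto \int_{\mathbb R^*}w\,\mathbf 1_{(\delta,+\infty)}(|w|)\,dm(w)$ is vaguely continuous: indeed $w\mapsto w\mathbf 1_{(\delta,+\infty)}(|w|)$ is bounded on neighbourhoods of $\infty$ in the metric $d$, continuous off $\{|w|=\delta\}$, and compactly supported away from $0$ in the $d$-metric (because $\{|w|\ge\delta\}$ is $d$-bounded), so the usual argument for vague-continuity of integration against a bounded continuous compactly supported function applies at continuity points of the integrand. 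Hence by the continuous mapping theorem (in the version for mappings continuous on a set of full limit-measure, e.g. \cite{Billingsley}), $T_\delta({\mathcal N}_n^{\tilde\omega})$ converges in distribution under $\mathbb P_\xi$ to $T_\delta({\mathcal N}^{\tilde\omega})$.

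The second step is to check the claimed algebraic identity
$$Z_n^{\tilde\omega}=a_n^{-1}\sum_{x,y}\zeta_{n,x,y}(\tilde\omega)h(\xi_x,\xi_y)\mathbf 1_{\{a_n^{-1}|\zeta_{n,x,y}(\tilde\omega)h(\xi_x,\xi_y)|>\delta\}}=\int_{\mathbb R^*}w\,\mathbf 1_{(\delta,+\infty)}(|w|)\,d{\mathcal N}_n^{\tilde\omega}(w),$$
which is immediate from the definition ${\mathcal N}_n(\tilde\omega,\xi)=\sum_{x\ne y}\delta_{a_n^{-1}\zeta_{n,x,y}(\tilde\omega)h(\xi_x,\xi_y)}$, noting that the atoms with $x=y$ do not contribute because $h(\xi_x,\xi_x)=0$ by Item~(i) of Assumption~\ref{HYP}; one must also observe that the sum defining $Z_n^{\tilde\omega}$ is in fact finite, since only finitely many pairs $(x,y)$ have $\zeta_{n,x,y}(\tilde\omega)\ne0$ (the occupation measures $\tilde N_{n,t_i}$ have finite support $\tilde\Omega_0$-surely), so $T_\delta({\mathcal N}_n^{\tilde\omega})$ is a well-defined random variable and equals $Z_n^{\tilde\omega}$ pointwise. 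Combining the two steps gives the proposition.

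The main obstacle is the rigorous verification of the vague-continuity of $T_\delta$ at points $m$ with $m(\{|w|=\delta\})=0$, together with the measurability and well-definedness bookkeeping needed to invoke the continuous mapping theorem on the non-Polish-looking (but in fact Polish, in the $d$-metric) space $M_p(\mathbb R^*)$. The truncation at level $\delta$ is precisely what makes the functional tractable: the integrand $w\mapsto w\mathbf 1_{(\delta,+\infty)}(|w|)$ vanishes on the $d$-neighbourhood of $0$ and is $d$-bounded near $\infty$, so its effective support in the $d$-metric is $d$-compact, which is what the definition of vague convergence exploits. I expect that once the continuity set is correctly identified and shown to have full ${\mathcal N}^{\tilde\omega}$-measure, the rest is a routine application of standard point-process machinery as in \cite{Resnick87} and \cite{DDMS}, and the subsequent propositions will handle the passage $\delta\to0$ to recover the untruncated sum.
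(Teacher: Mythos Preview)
Your argument has a genuine gap at the heart of step one: the claim that $\{|w|\ge\delta\}$ is $d$-bounded is false. In the metric $d$ introduced in the paper, both $0$ and $\pm\infty$ lie at infinity (since $|\log|w||\to\infty$ as $|w|\to 0$ \emph{and} as $|w|\to\infty$), so a $d$-compact subset of $\mathbb R^*$ must be bounded away from \emph{both} $0$ and $\pm\infty$. The set $\{|w|\ge\delta\}$ is bounded away from $0$ but not from $\pm\infty$, hence it is not $d$-compact. Moreover the integrand $w\mapsto w\mathbf 1_{(\delta,+\infty)}(|w|)$ is itself unbounded. Consequently $T_\delta$ is \emph{not} vaguely continuous on $M_p(\mathbb R^*)$: for a concrete counterexample, take $m_n:=\delta_{n}$ (a single atom at $w=n$); then $m_n\to 0$ vaguely, yet $T_\delta(m_n)=n\to\infty$. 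So the continuous mapping theorem cannot be applied directly to $T_\delta$.

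This is precisely why the paper's proof introduces an additional truncation at level $M$ (so that the integrand is supported in the genuinely $d$-compact set $\{\delta<|w|<M\}$), applies the continuous mapping theorem to obtain convergence of $\int_{(-M,-\delta)\cup(\delta,M)}w\,d\mathcal N_n^{\tilde\omega}$, and then controls the tails $\{|w|>M\}$ separately: for the limit process via an explicit characteristic-function estimate, and for the approximating processes via the \emph{uniform} bound $\sup_n\mathrm P_{\tilde\omega}(\mathcal N_n^{\tilde\omega}(\{|w|>M\})\ne 0)\le C M^{-\beta}\tilde G_n^+$, which uses the almost-sure convergence $\tilde G_n^+\to\tilde G^+$ on $\tilde\Omega_0$. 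The uniformity in $n$ of this last estimate is the missing ingredient in your approach; without it, mass of $\mathcal N_n^{\tilde\omega}$ could escape to $+\infty$ in the $d$-metric while still contributing to $Z_n^{\tilde\omega}$.
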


\begin{prop}\label{PPP2}
For every $\gamma_0>0$, we have
$$\lim_{\delta\rightarrow 0}\limsup_{n\rightarrow\infty}
   \mathbf P\left(|T_n(\delta)|>\gamma_0|\tilde{\mathcal F}\right)=0 \ \ \ \tilde{\mathbb P}-a.s.,$$
with
$$T_n(\delta):=a_n^{-1}\sum_{x,y} \zeta_{n,x,y}h(\xi_x,\xi_y)\mathbf 1_{\{a_n^{-1}|\zeta_{n,x,y}h(\xi_x,\xi_y)|\le \delta\}}\ \ \mbox{if}\ \ \beta\le 1$$
and
$$T_n(\delta):=a_n^{-1}\sum_{x,y} \zeta_{n,x,y}h(\xi_x,\xi_y)\mathbf 1_{\{a_n^{-1}|\zeta_{n,x,y}h(\xi_x,\xi_y)|\le \delta\}}+(c_0-c_1)
\frac{\beta\delta^{1-\beta}}{\beta-1}\tilde G_n^-
\ \ \mbox{if}\ \ \beta>1.$$
\end{prop}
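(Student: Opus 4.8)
The plan is to control the truncated-small-jumps sum $T_n(\delta)$ by a second-moment (or, for $\beta\le 1$, first-moment) estimate conditionally on $\tilde{\mathcal F}$, splitting the analysis at $\beta=1$ and — for $\beta\ge 4/3$ — using Item (vi) of Assumption \ref{HYP} to kill the cross terms. Throughout, the key data are the almost sure bounds available on $\tilde\Omega_0$: $\tilde R_n\le n^{1/\alpha_0+\varepsilon}$, $\tilde N_n^*\le n^{1-1/\alpha_0+\varepsilon}$, $\tilde G_n^\pm\to\tilde G^\pm$, and $a_n^{-1}\le n^{-2+2/\alpha_0-2/(\alpha_0\beta)+\gamma_0}$, together with $a_n^{-1}\sup_{x,y}|\zeta_{n,x,y}|\le n^{-2/(\alpha_0\beta)+2\varepsilon+\gamma_0}$ from \eqref{MAJO}. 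Fix $\tilde\omega\in\tilde\Omega_0$; all estimates below are conditional on $\tilde{\mathcal F}$, i.e.\ computed with respect to $\mathbb P_\xi$, and I abbreviate $\zeta_{x,y}:=\zeta_{n,x,y}(\tilde\omega)$.

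\emph{Case $\beta<1$.} Here one does not centre, and it suffices to bound $\mathbf E[|T_n(\delta)|\,|\,\tilde{\mathcal F}]$. Since $|z\mathbf 1_{\{|z|\le \delta a_n\}}|$ has conditional expectation $\int_0^{\delta a_n|\zeta_{x,y}|^{-1}}\mathbb P(|h(\xi_1,\xi_2)|>u)\,du$ per pair, which by Item (iii) is $\asymp (\delta a_n|\zeta_{x,y}|^{-1})^{1-\beta}$, one gets
\[
\mathbf E\big[|T_n(\delta)|\,\big|\,\tilde{\mathcal F}\big]\le C\,a_n^{-1}\sum_{x,y}|\zeta_{x,y}|\,\big(\delta a_n|\zeta_{x,y}|^{-1}\big)^{1-\beta}=C\,\delta^{1-\beta}\,a_n^{-\beta}\sum_{x,y}|\zeta_{x,y}|^\beta=C\,\delta^{1-\beta}\,\tilde G_n^{+},
\]
(using the crude bound $\sum|\zeta_{x,y}|_{\pm}^\beta\le\sum|\zeta_{x,y}|^\beta$, i.e.\ $\tilde G_n^++\tilde G_n^-\le 2\tilde G_n^+$ up to constants), and since $\tilde G_n^+\to\tilde G^+<\infty$ the right-hand side is $\le C'\delta^{1-\beta}\to 0$ as $\delta\to 0$; Markov's inequality then finishes the case. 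For $\beta=1$ one uses Item (vii) ($c_0=c_1$ and the truncated mean vanishes) so that after centring the same first-moment bound applies with an extra $o(1)$ coming from $\mathbb E[h(\xi_1,\xi_2)\mathbf 1_{\{|h|\le M\}}]\to 0$.

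\emph{Case $1<\beta<2$.} Now $T_n(\delta)$ is centred (the additive constant $(c_0-c_1)\frac{\beta\delta^{1-\beta}}{\beta-1}\tilde G_n^-$ is exactly the asymptotic conditional mean of the un-centred sum, up to $o(1)$, by Item (v) and the computation $\mathbb E[z\mathbf 1_{\{|z|\le M\}}]\sim\frac{\beta}{\beta-1}(c_1-c_0)M^{1-\beta}$), so I pass to the conditional variance. Writing $T_n(\delta)=a_n^{-1}\sum_{x,y}\zeta_{x,y}\big(h(\xi_x,\xi_y)\mathbf 1_{\{\cdots\}}-\mathbf E[\cdots]\big)$, expand the square. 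The diagonal-in-pairs terms $(x,y)=(x',y')$ contribute $a_n^{-2}\sum_{x,y}\zeta_{x,y}^2\,\mathrm{Var}\big(h\mathbf 1_{\{|h|\le \delta a_n|\zeta_{x,y}|^{-1}\}}\big)$, and $\mathrm{Var}$ of the truncated variable is $\lesssim (\delta a_n|\zeta_{x,y}|^{-1})^{2-\beta}$ by Item (iii), giving $\lesssim \delta^{2-\beta}a_n^{-\beta}\sum|\zeta_{x,y}|^\beta=\delta^{2-\beta}\tilde G_n^+\to\delta^{2-\beta}\tilde G^+$, which $\to0$ with $\delta$. The terms with $\#\{x,y\}\cap\{x',y'\}=0$ vanish by independence of $\xi$; those sharing exactly one index (say $x=x'$) are the only genuine obstacle. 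For $\beta<4/3$ I bound these by Item (iv): the covariance of $h(\xi_x,\xi_y)\mathbf 1$ and $h(\xi_x,\xi_{y'})\mathbf 1$ is at most $C_0\big(\max(1,M)\max(1,M')\big)^{-\gamma}$ with $M=\delta a_n|\zeta_{x,y}|^{-1}$, $M'=\delta a_n|\zeta_{x,y'}|^{-1}$, and summing over $x,y,y'$ with the range/occupation-time bounds yields $O\big(n^{3/\alpha_0+(3+4\gamma)\varepsilon-4\gamma/(\alpha_0\beta)+\gamma_0}\big)$, which is $o(1)$ exactly by the assumption $(3+4\gamma)\varepsilon\alpha_0<4\gamma/\beta-3$ — the same inequality used for $A_2$ in Proposition \ref{PROP1}. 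For $\beta\ge 4/3$ the tail decay from Item (iv) is too weak, and instead I use Item (vi): $\mathbf E[\mathbf h_M(\xi_1,\xi_2)\mathbf h_{M'}(\xi_1,\xi_3)]\le C_0'(MM')^{-\theta'}$ controls precisely the centred cross-covariances, and summing gives $O\big(n^{(3-4(\theta'+1)/\beta)/\alpha_0+(4\theta'+7)\varepsilon}\big)=o(1)$ by the hypothesis $\frac1\alpha\big(3-\frac{4(\theta'+1)}\beta\big)+(4\theta'+7)\varepsilon<0$ (its $\alpha_0=1$ analogue for the recurrent-without-local-time case). Combining, $\mathrm{Var}(T_n(\delta)\,|\,\tilde{\mathcal F})\le \delta^{2-\beta}\tilde G^+ + o_{\tilde\omega}(1)$, so $\limsup_n\mathbf P(|T_n(\delta)|>\gamma_0|\tilde{\mathcal F})\le \gamma_0^{-2}\delta^{2-\beta}\tilde G^+(\tilde\omega)$, which tends to $0$ as $\delta\to0$ for $\tilde{\mathbb P}$-a.e.\ $\tilde\omega$.

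\emph{Main obstacle.} The delicate point is the one-shared-index cross term, which is where Assumption \ref{HYP} genuinely bites: the naive bound only using the marginal tail \eqref{hyph1}--\eqref{hyph1b} is not summable, and one must invoke the joint-tail condition (iv) in the regime $\beta<4/3$ and the stronger truncated-product moment bound (vi) when $\beta\ge 4/3$, matching the two mutually exclusive sub-hypotheses in the statement. Everything else is a bookkeeping exercise with the polynomial bounds on $\tilde R_n$, $\tilde N_n^*$ and $a_n^{-1}$ that are valid on $\tilde\Omega_0$, exactly as in the proof of Proposition \ref{PROP1}.
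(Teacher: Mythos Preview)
Your overall architecture matches the paper's: first-moment control for $\beta<1$, second-moment control for larger $\beta$, and a three-way split of the pairs $(x,y),(x',y')$ into disjoint, identical, and one-shared-index, with Item (iv) (respectively Item (vi)) handling the last case for $\beta<4/3$ (respectively $\beta\ge 4/3$). Two concrete points need repair, however.

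\textbf{The case $\beta=1$ cannot be done by first moments.} For $\beta=1$ you write ``after centring the same first-moment bound applies''. But for $\beta=1$ one has $\mathbb E\big[|h(\xi_1,\xi_2)|\mathbf 1_{\{|h|\le M\}}\big]\asymp\log M$, and centring does not reduce the conditional $L^1$ norm. Concretely, $a_n^{-1}\sum_{x,y}|\zeta_{x,y}|\,\mathbb E[|h|\mathbf 1_{\{|h|\le M_{x,y}\}}]$ is of order $\log n$ (since $\sum_{x,y}|\zeta_{x,y}|\lesssim n^2$ and $a_n\asymp n^2$ when $\beta=1$), so Markov on the absolute value gives nothing. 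The paper therefore places $\beta=1$ with the second-moment argument for $\beta\in[1,2)$: Item (vii) guarantees $c_0=c_1$ and $\mathbb E[h\mathbf 1_{\{|h|\le M\}}]\to 0$, so that the asymptotic centring constant vanishes and the variance computation goes through unchanged. You should move $\beta=1$ into the second-moment case.

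\textbf{The cross-term bound for $1\le\beta<4/3$ is misstated.} Item (iv) controls the \emph{joint tail} $\mathbb P(|h(\xi_1,\xi_2)|>z,\ |h(\xi_1,\xi_3)|>z')$, not the covariance of the truncated variables. To bound $\mathbb E[|U_{n,x,y}U_{n,x,z}|\,|\,\tilde{\mathcal F}]$ one must integrate the tail over $[0,\delta]^2$, and the integral $\int_0^\delta (\max(1,u a_n/|\zeta|))^{-\gamma}\,du$ behaves like $a_n^{-\gamma'}|\zeta|^{\gamma'}$ with $\gamma':=\min(1,\gamma)$, not like the raw tail exponent $\gamma$. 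The resulting sum is $O\big(n^{3/\alpha_0-4\gamma'/(\alpha_0\beta)+7\varepsilon+\gamma_0}\big)$, and its vanishing requires the \emph{additional} constraint on $\varepsilon$ that the paper imposes ($3/\alpha_0-4\gamma'/(\alpha_0\beta)+7\varepsilon<0$), which is distinct from the $(3+4\gamma)\varepsilon\alpha_0<4\gamma/\beta-3$ used for $A_2$ in Proposition \ref{PROP1}. Your exponent and the invoked constraint are borrowed from the $A_2$ step and are not the right ones here. (For $\beta\ge 4/3$ your use of Item (vi) and the exponent you quote do match the paper.)

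With these two fixes your argument becomes essentially the paper's.
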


\begin{prop}[see \cite{SmorodnitskyTaqqu}]\label{PPP3}
Let $\mathcal P$ be a Poisson process on $\mathbb R^*$ with intensity admitting the
density $z\mapsto\beta|z|^{-\beta-1}(a\mathbf{1}_{\{z>0\}}+b\mathbf{1}
_{\{z<0\}})$. 

If $\beta<1$, then $\int_{\mathbb R^*\setminus[-\delta,\delta]}w\, d\mathcal P(w)$
converges in distribution, as $\delta$ goes to 0, to
a stable random variable with characteristic function $\Phi_{a+b,a-b,\beta}$
with the notation of (\ref{FonCar}).

If $\beta=1$, then $\int_{\mathbb R^*\setminus[-\delta,\delta]}w\, d\mathcal P(w)-(a-b)\int_\delta^{+\infty}\frac{\sin x}{x^2}\, dx$
converges in distribution, as $\delta$ goes to $0$, to
a stable random variable with characteristic function
$\Phi_{a+b,a-b,1}$, with the notation of (\ref{FonCar1}).

If $\beta>1$, then 
$\int_{\mathbb R^*\setminus[-\delta,\delta]}w\, d\mathcal P(w)-(a-b)\frac
{\beta\delta^{1-\beta}}{\beta-1}$
converges in distribution, as $\delta$ goes to 0, to
a stable random variable with characteristic function $\Phi_{a+b,a-b,\beta}$
with the notation of (\ref{FonCar}).
\end{prop}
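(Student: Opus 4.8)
The statement is the classical identification of strictly stable laws with compensated Poisson integrals, so the plan is to compute the characteristic function of the truncated integral explicitly, via the exponential (Campbell) formula for Poisson point processes, and then to let $\delta\to 0$, concluding with L\'evy's continuity theorem. Write $\nu$ for the intensity measure of $\mathcal P$, i.e. $\nu(dw)=\beta|w|^{-\beta-1}(a\mathbf 1_{\{w>0\}}+b\mathbf 1_{\{w<0\}})\,dw$. Since $\nu$ is finite on $\mathbb R^*\setminus[-\delta,\delta]$, for every real $z$ we have
$$\mathbb E\left[\exp\left(iz\int_{\mathbb R^*\setminus[-\delta,\delta]}w\,d\mathcal P(w)\right)\right]=\exp\left(\Psi_\delta(z)\right),\qquad \Psi_\delta(z):=\int_{|w|>\delta}(e^{izw}-1)\,\nu(dw),$$
so it suffices to analyse $\Psi_\delta(z)$ (possibly after subtracting a deterministic centering) as $\delta\to 0$ and to identify the limit with the logarithm of the announced characteristic function.

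When $\beta<1$ one has $\int_{|w|\le 1}|w|\,\nu(dw)<\infty$, hence $|e^{izw}-1|\le\min(|z||w|,2)$ is $\nu$-integrable on $\mathbb R^*$ and dominated convergence yields $\Psi_\delta(z)\to\int_{\mathbb R^*}(e^{izw}-1)\,\nu(dw)$. Splitting this integral over $(0,\infty)$ and $(-\infty,0)$, substituting $w\mapsto -w$ on the negative half-line, and using the classical evaluation $\int_0^\infty(e^{izw}-1)w^{-\beta-1}\,dw=\Gamma(-\beta)|z|^\beta e^{-i\frac{\pi\beta}2\sgn(z)}$ together with $\int_0^\infty t^{-\beta}\sin t\,dt=\Gamma(1-\beta)\cos\frac{\pi\beta}2$ to normalise the constant, one finds that the limit equals $-|z|^\beta\int_0^\infty\frac{\sin t}{t^\beta}\,dt\,\big((a+b)-i(a-b)\sgn(z)\tan\frac{\pi\beta}2\big)$, i.e. $\log\Phi_{a+b,a-b,\beta}(z)$ in (\ref{FonCar}). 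When $\beta>1$ one has instead $\int_{|w|\le 1}w^2\,\nu(dw)<\infty$ and $\int_{|w|>1}|w|\,\nu(dw)<\infty$; writing $e^{izw}-1=(e^{izw}-1-izw)+izw$ and computing $\int_{|w|>\delta}w\,\nu(dw)=(a-b)\frac{\beta\delta^{1-\beta}}{\beta-1}$, we see that the characteristic function of $\int_{|w|>\delta}w\,d\mathcal P(w)-(a-b)\frac{\beta\delta^{1-\beta}}{\beta-1}$ is $\exp\big(\int_{|w|>\delta}(e^{izw}-1-izw)\,\nu(dw)\big)$; since $|e^{izw}-1-izw|\le\min(\tfrac12 z^2w^2,\,2+|z||w|)$ is $\nu$-integrable, dominated convergence and the same constant bookkeeping give the limit $\log\Phi_{a+b,a-b,\beta}(z)$ again.

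The borderline case $\beta=1$ is the delicate point and the main obstacle, since $\int_{|w|\le1}|w|\,\nu(dw)$ diverges only logarithmically and the required centering is neither the trivial one nor a pure truncated first moment. Now $\nu(dw)=|w|^{-2}(a\mathbf 1_{\{w>0\}}+b\mathbf 1_{\{w<0\}})\,dw$; decomposing $e^{izw}-1$ into real and imaginary parts, the cosine part $\int_{|w|>\delta}(\cos(zw)-1)\,\nu(dw)=(a+b)\int_\delta^\infty\frac{\cos(zw)-1}{w^2}\,dw$ converges as $\delta\to 0$ (domination $|\cos(zw)-1|\le\min(\tfrac12 z^2w^2,2)$) and, via $\int_0^\infty\frac{1-\cos t}{t^2}\,dt=\frac\pi2$, contributes $-\frac\pi2(a+b)|z|$. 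The imaginary part equals $i(a-b)\int_\delta^\infty\frac{\sin(zw)}{w^2}\,dw$ by oddness of $\sin$, and the substitution $x=|z|w$ together with $\sin x\sim x$ near $0$ gives
$$\int_\delta^\infty\frac{\sin(zw)}{w^2}\,dw-z\int_\delta^\infty\frac{\sin x}{x^2}\,dx\ \longrightarrow\ -z\log|z|\qquad(\delta\to 0).$$
Hence subtracting the announced centering $(a-b)\int_\delta^\infty\frac{\sin x}{x^2}\,dx$ from $\int_{|w|>\delta}w\,d\mathcal P(w)$ turns the log-characteristic function into $-\frac\pi2(a+b)|z|-i(a-b)\sgn(z)|z|\log|z|+o(1)$, which is exactly $\log\Phi_{a+b,a-b,1}(z)$ of (\ref{FonCar1}). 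In all three cases L\'evy's continuity theorem then gives the claimed convergence in distribution; alternatively, the whole statement is an instance of the general series representation of strictly stable variables, for which we refer to \cite{SmorodnitskyTaqqu}.
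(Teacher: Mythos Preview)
Your proof is correct and follows essentially the same route as the paper's own proof: both start from the exponential (Campbell) formula for the Poisson integral, split the exponent into its cosine and sine parts, and then treat the three ranges of $\beta$ separately with the appropriate centering, citing the standard integral identities (the paper via Feller, you via the Gamma function and $\int_0^\infty\frac{1-\cos t}{t^2}\,dt=\frac\pi2$). Your argument is slightly more explicit in justifying the passage to the limit by dominated convergence, but the structure and all the key computations---in particular the $\beta=1$ calculation $\int_\delta^\infty\frac{\sin(zw)}{w^2}\,dw - z\int_\delta^\infty\frac{\sin x}{x^2}\,dx\to -z\log|z|$---coincide with what the paper does.
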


The following corollary is a consequence of Propositions \ref{PROP1},
 \ref{PPP1}, \ref{PPP2} and \ref{PPP3}.

\begin{cor}\label{PPP4}
We have
$$\lim_{n\rightarrow+\infty}
\mathbf E[e^{ia_n^{-1}\sum_{x,y}\zeta_{n,x,y}(\tilde\omega)h(\xi_x,\xi_y)}|\tilde{\mathcal F}] =\Phi_{(c_0+c_1)\tilde G^+(\tilde\omega),(c_0-c_1)\tilde G^-(\tilde\omega),\beta}(1),$$
for $ \tilde{\mathbb P} $-almost every $\tilde\omega$ in $\tilde \Omega $ 
and
$$\lim_{n\rightarrow+\infty}
\mathbf E\left[e^{ia_n^{-1}\sum_{x,y}\zeta_{n,x,y}h(\xi_x,\xi_y)}\right] =
\mathbf E\left[\Phi_{(c_0+c_1)\tilde G^+,(c_0-c_1)\tilde G^-,\beta}(1)\right].$$
\end{cor}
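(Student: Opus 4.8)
The plan is to derive Corollary \ref{PPP4} by combining the conditional Poisson convergence of the point processes $\mathcal N_n$ with the truncation estimates, following the strategy used in \cite{DDMS}. Fix a real number $u$; by homogeneity of the stable characteristic functions it suffices to treat $u=1$, after which a rescaling of $\theta_i$ (or $\theta_{i,j}$) recovers the general case. First I would fix $\tilde\omega\in\tilde\Omega_0$ outside the exceptional $\tilde{\mathbb P}$-null set coming from Proposition \ref{PPP1} and Proposition \ref{PPP2}, and split
$$a_n^{-1}\sum_{x,y}\zeta_{n,x,y}(\tilde\omega)h(\xi_x,\xi_y)=Z_n^{\tilde\omega}(\delta)+T_n(\delta)-(c_0-c_1)\tfrac{\beta\delta^{1-\beta}}{\beta-1}\tilde G_n^-(\tilde\omega)\mathbf 1_{\{\beta>1\}},$$
where $Z_n^{\tilde\omega}(\delta)$ is the truncated sum of Proposition \ref{PPP1} and $T_n(\delta)$ the small-jump remainder of Proposition \ref{PPP2}. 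By Proposition \ref{PPP1}, for fixed $\delta>0$ the variable $Z_n^{\tilde\omega}(\delta)$ converges in law (under $\mathbb P_\xi$) to $\int_{\mathbb R^*}w\mathbf 1_{(\delta,\infty)}(|w|)\,d\mathcal N^{\tilde\omega}(w)$, which by the explicit intensity $\eta_{\tilde\omega}$ of Proposition \ref{PROP1} is exactly the truncated integral appearing in Proposition \ref{PPP3} with $a=\tfrac{(c_0+c_1)\tilde G^+(\tilde\omega)+(c_0-c_1)\tilde G^-(\tilde\omega)}2$ and $b=\tfrac{(c_0+c_1)\tilde G^+(\tilde\omega)-(c_0-c_1)\tilde G^-(\tilde\omega)}2$, so that $a+b=(c_0+c_1)\tilde G^+(\tilde\omega)$ and $a-b=(c_0-c_1)\tilde G^-(\tilde\omega)$.

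Next I would let $\delta\to 0$. Proposition \ref{PPP3} shows that the centered truncated Poisson integral converges in distribution to a stable law with characteristic function $\Phi_{(c_0+c_1)\tilde G^+(\tilde\omega),(c_0-c_1)\tilde G^-(\tilde\omega),\beta}$, where when $\beta>1$ the centering is $(a-b)\tfrac{\beta\delta^{1-\beta}}{\beta-1}=(c_0-c_1)\tilde G^-(\tilde\omega)\tfrac{\beta\delta^{1-\beta}}{\beta-1}$, and when $\beta=1$ Assumption \ref{HYP}(vii) forces $c_0=c_1$, so no centering is needed (and $\tilde G^-$ does not enter). Since $\tilde G_n^-(\tilde\omega)\to\tilde G^-(\tilde\omega)$ for $\tilde\omega\in\tilde\Omega_0$, the deterministic centering term $(c_0-c_1)\tfrac{\beta\delta^{1-\beta}}{\beta-1}\tilde G_n^-(\tilde\omega)$ matches, up to an $o(1)$ in $n$, the centering inside $Z_n^{\tilde\omega}(\delta)+T_n(\delta)$. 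Proposition \ref{PPP2} controls the remaining discrepancy: $\limsup_n\mathbf P(|T_n(\delta)|>\gamma_0\mid\tilde{\mathcal F})(\tilde\omega)\to 0$ as $\delta\to 0$. A standard three-$\varepsilon$ argument on characteristic functions — approximate $\mathbf E[e^{i(\cdots)}\mid\tilde{\mathcal F}]$ by $\mathbf E[e^{iZ_n^{\tilde\omega}(\delta)}\mid\tilde{\mathcal F}]$ up to an error bounded by $\mathbf P(|T_n(\delta)|>\gamma_0\mid\tilde{\mathcal F})+\gamma_0$, then pass to the limit in $n$ using Propositions \ref{PPP1} and \ref{PPP3}, then let $\delta\to0$ and $\gamma_0\to0$ — yields
$$\lim_{n\to\infty}\mathbf E\big[e^{ia_n^{-1}\sum_{x,y}\zeta_{n,x,y}(\tilde\omega)h(\xi_x,\xi_y)}\mid\tilde{\mathcal F}\big]=\Phi_{(c_0+c_1)\tilde G^+(\tilde\omega),(c_0-c_1)\tilde G^-(\tilde\omega),\beta}(1)$$
for $\tilde{\mathbb P}$-almost every $\tilde\omega$, which is the first assertion.

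For the second (unconditional) assertion I would integrate the first over $\tilde\omega$. The conditional characteristic functions $\tilde\omega\mapsto\mathbf E[e^{ia_n^{-1}\sum_{x,y}\zeta_{n,x,y}h(\xi_x,\xi_y)}\mid\tilde{\mathcal F}](\tilde\omega)$ are bounded by $1$ in modulus and converge $\tilde{\mathbb P}$-a.s. to $\Phi_{(c_0+c_1)\tilde G^+,(c_0-c_1)\tilde G^-,\beta}(1)$; by dominated convergence,
$$\lim_{n\to\infty}\mathbf E\big[e^{ia_n^{-1}\sum_{x,y}\zeta_{n,x,y}h(\xi_x,\xi_y)}\big]=\lim_{n\to\infty}\mathbf E\Big[\mathbf E\big[e^{ia_n^{-1}\sum_{x,y}\zeta_{n,x,y}h(\xi_x,\xi_y)}\mid\tilde{\mathcal F}\big]\Big]=\mathbf E\big[\Phi_{(c_0+c_1)\tilde G^+,(c_0-c_1)\tilde G^-,\beta}(1)\big],$$
which is the second claim. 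I expect the main technical obstacle to be the bookkeeping in the three-$\varepsilon$ argument for $\beta>1$: one must check that the deterministic $\delta$-dependent centering in $T_n(\delta)$ exactly cancels the centering used in Proposition \ref{PPP3} in the limit, and that swapping the order of the limits $n\to\infty$ and $\delta\to0$ is legitimate — this is precisely what Proposition \ref{PPP2} is designed to guarantee, but care is needed because the convergence there is only $\tilde{\mathbb P}$-a.s. and "for $n$ large depending on $\delta$". The case distinctions $\beta<1$, $\beta=1$, $\beta>1$ (and the role of Assumption \ref{HYP}(vii) when $\beta=1$) should be handled separately to keep the centering terms straight.
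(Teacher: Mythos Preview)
Your proposal is correct and follows essentially the same route as the paper's proof: the same decomposition $V_n=W_n(\delta)+T_n(\delta)-\kappa\delta^{1-\beta}\tilde G_n^-$ (with $\kappa=(c_0-c_1)\tfrac{\beta}{\beta-1}\mathbf 1_{\{\beta>1\}}$), the same identification of the Poisson intensity parameters $a,b$ so that $a+b=(c_0+c_1)\tilde G^+$ and $a-b=(c_0-c_1)\tilde G^-$, the same use of $\tilde G_n^-\to\tilde G^-$ to match the centering with that of Proposition \ref{PPP3}, and the same dominated-convergence step for the unconditional statement. The paper carries out the ``three-$\varepsilon$'' argument explicitly with an $\epsilon/6$ bookkeeping (choosing $\delta$ first from Propositions \ref{PPP2} and \ref{PPP3}, then $n_0$ from Proposition \ref{PPP1}, then $n_1$ from $\tilde G_n^-\to\tilde G^-$), which is exactly the care you anticipate in your final paragraph.
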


\begin{proof}[Proof of Corollary \ref{PPP4}]
Observe first that due to the Lebesgue dominated convergence theorem
it is enough to prove the first convergence.
Let $\tilde\Omega_1$ be the subset of $\tilde\Omega_0$ on which the convergences of Propositions \ref{PPP1}
and \ref{PPP2} hold and let $\tilde\omega\in\tilde\Omega_1$.
To simplify notations, let us write 
$$V_{n}
:=a_n^{-1}\sum_{x,y}\zeta_{n,x,y}h(\xi_x,\xi_y)\ \ \mbox{and}\ \ W_{n}(\delta):=a_n^{-1}\sum_{x,y}\zeta_{n,x,y}h(\xi_x,\xi_y)
\mathbf 1_{\{a_n^{-1}|\zeta_{n,x,y}
h(\xi_x,\xi_y)|>\delta\}}.$$
We set $\kappa:=0$ if $\beta\le 1$ and $\kappa:=(c_0-c_1)\frac\beta{\beta-1}$ if $\beta>1$ (recall that we assume $c_0=c_1$ if
$\beta=1$). We also write $W_{\tilde\omega}(\delta):=\int_{\mathbb R\setminus[-\delta,\delta]}w\, d{\mathcal N}_{\tilde\omega}(w)$ (where $\mathcal N_{\tilde\omega}$ is the Poisson process of Proposition \ref{PROP1}, which is defined on some probability
space $ (\Omega_{\tilde\omega},\mathcal T_{\tilde\omega}, {\rm P}_{\tilde\omega})$ endowed with the expectation $ {\rm E}_{\tilde\omega}$).
Let $\epsilon>0$. 
Due to Propositions \ref{PPP2}, \ref{PROP1} and \ref{PPP3}, 
we consider $\delta>0$ and $n_0$ such that,
for every $n\ge n_0$, we have
\begin{equation}\label{AAA1}
\mathbf P\left(|T_n(\delta)|>\frac{\epsilon}6\Big|\tilde{\mathcal F}\right)(\tilde\omega)<\frac\epsilon 6
\end{equation}
and such that
\begin{equation}\label{AAA2}
\Big|{\rm E}_{\tilde\omega}\big[e^{i(W_{\tilde\omega}
   (\delta)-\kappa\delta^{1-\beta}\tilde G^-(\tilde\omega))}
\big ]-\Phi_{(c_0+c_1)\tilde G^+(\tilde\omega),(c_0-c_1)\tilde G^-(\tilde\omega),\beta}(1)\Big|<\frac\epsilon 6 .
\end{equation}
Due to Proposition \ref{PPP1}, we consider $n_1\ge n_0$ such that, for
every $n\ge n_1$, we have
\begin{equation}\label{AAA3}
\left|\mathbf E[e^{iW_n(\delta)}|\tilde{\mathcal F}]
(\tilde\omega)-
{\rm E}_{\tilde\omega}[e^{iW_{\tilde\omega}(\delta)}]\right|<\frac\epsilon 6.
\end{equation}
Now, let $n_2\ge n_1$ such that, for every $n\ge n_2$, we have
\begin{equation}\label{AAA4}
\Big|e^{i\kappa\delta^{1-\beta}\tilde G^-(\tilde\omega)}-e^{i\kappa\delta^{1-\beta}\tilde G_n^-(\tilde\omega)}\Big|<\frac\epsilon 6.
\end{equation}
For $n\ge n_2$ , we have
\begin{eqnarray*}
 && \displaystyle\left|\mathbf E[e^{iV_n}|\tilde{\mathcal F}](\tilde\omega) -\Phi_{(c_0+c_1)\tilde G^+(\tilde\omega),(c_0-c_1)\tilde G^-(\tilde\omega),\beta}(1)\right|\l \\
&\le&\frac\epsilon 6+\left|\mathbf E[e^{iV_n}|\tilde{\mathcal F}](\tilde\omega)-{\rm E}_{\tilde\omega}
[e^{i(W_{\tilde\omega}(\delta)-\kappa\delta^{1-\beta}\tilde G^-(\tilde\omega))}]\right|\ \ \ \mbox{due to (\ref{AAA2})}\\
&\le& \frac\epsilon 6+\left|\mathbf E[e^{i(V_n+\kappa\delta^{1-\beta}\tilde G^-)}|\tilde{\mathcal F}](\tilde\omega)-{\rm E}_{\tilde\omega}
[e^{i(W_{\tilde\omega}(\delta)}]\right|\\
&\le& \frac{2\epsilon} 6+\left|\mathbf E[e^{i(V_n+\kappa\delta^{1-\beta}\tilde G_n^-)}|\tilde{\mathcal F}](\tilde\omega)
-{\rm E}_{\tilde\omega}
[e^{i(W_{\tilde\omega}(\delta)}]\right|\ \ \ \mbox{due to (\ref{AAA4})}\\
&\le& \frac{2\epsilon} 6+\left|\mathbf E[e^{i(W_n(\delta)+T_n(\delta))}|\tilde{\mathcal F}](\tilde\omega)-{\rm E}_{\tilde\omega}
[e^{i(W_{\tilde\omega}(\delta)}]\right|\\
&\le& \frac{3\epsilon} 6+\left|\mathbf E[e^{i(W_n(\delta)+T_n(\delta))}
-e^{iW_n(\delta)}|\tilde{\mathcal F}](\tilde\omega)\right|\ \ \ \mbox{due to (\ref{AAA3})}\\
&\le& \frac{4\epsilon} 6+2\mathbf P\left(\left.|T_n(\delta)|>\frac{\epsilon}6\right|\tilde{\mathcal F}\right)(\tilde\omega)\le\epsilon\ \ \ \mbox{due to (\ref{AAA1})}.
\end{eqnarray*}
\end{proof}

\begin{proof}[Proof of the convergence of finite distributions in Theorems
\ref{THM0}, \ref{THM00} and \ref{THM1}]
Admitting Propositions \ref{PPP1}, \ref{PPP2} and \ref{PPP3} 
for the moment, let us
end the proof of the convergence of the finite distributions.
Due to Corollary \ref{PPP4}, we have
\begin{eqnarray*}
\lim_{n\rightarrow+\infty}\mathbf E[e^{ia_n^{-1}\sum_{x,y}\zeta_{n,x,y}
     h(\xi_x,\xi_y)}]
  &=&\mathbf E\left[\Phi_{(c_0+c_1)\tilde G^+,(c_0-c_1)\tilde G^-,\beta}(1)
 \right]\\
  &=&\mathbb E\left[\exp\left(-\int_0^{+\infty}\frac{\sin t}{t^\beta}\, dt
\left[(c_0+c_1) G^+-i(c_0-c_1) G^-\tan\frac{\pi\beta}2 \right]\right)\right].
\end{eqnarray*}
When $\alpha_0= 1$, with the use of (\ref{F1}) and (\ref{blaa}) , we obtain

\begin{eqnarray*}
 &&  \lim_{n\rightarrow +\infty}\mathbb E\left[e^{ia_n^{-1}\sum_{j=1}^m\theta_{j}
   (U_{\lfloor nt_j\rfloor}-U_{\lfloor nt_{j-1}\rfloor})}\right]\\
&=&\exp\left(-K_\beta^2\sum_{i=1}^m(t_i^2-t_{i-1}^2)
    |\theta_i|^\beta \int_0^{+\infty}
    \frac{\sin t}{t^\beta}\, dt\left[(c_0+c_1)-i(c_0-c_1)\sgn(\theta_i)\tan\frac{\pi\beta}2\right]\right)\\
&=&\prod_{j=1}^m\Phi_{(c_0+c_1)K_\beta^2(t_i^2-t_{i-1}^2),(c_0-c_1)K_\beta^2(t_i^2-t_{i-1}^2),\beta}(\theta_j)
\end{eqnarray*}
This gives the convergence of the finite distributions in Theorems
\ref{THM0} and \ref{THM00}.

When $\alpha_0>1$, due to Lemma \ref{memeloi}, we obtain
\begin{equation}\label{FonctionCar}
\lim_{n\rightarrow +\infty}\mathbb E\left[e^{i\sum_{j=1}^m\theta_{j}
   a_n^{-1}U_{\lfloor nt_j\rfloor}}\right]
      =\mathbb E\left[\Phi_{(c_0+c_1)G^+,(c_0-c_1)G^-,\beta}(1)\right],
\end{equation}
with $G^\pm=\int_{\mathbb R^2}\left|\sum_{i=1}^m\theta_i\mathcal L_{t_i}(x)\mathcal L_{t_i}(y)\right|^\beta_\pm\, dxdy$.
Let us recall that the right hand side of 
(\ref{FonctionCar}) corresponds to the characteristic
function of $\sum_{i=1}^m\theta_i\int_{\mathbb R^2}\mathcal L_{t_i}(x)\mathcal L_{t_i}(y)\,
dZ_{x,y}$ evaluated at one (see for example \cite{KhoshnevisanNualart} and
Appendix \ref{calculsto}).
\end{proof}

\begin{proof}[Proof of Proposition \ref{PPP1}]
To simplify notations we also write $ {\rm P}_{\tilde\omega} $ for $ {\bf P}(\cdot|\tilde{\mathcal F})(\tilde\omega) $ and 
$ {\rm E}_{\tilde\omega} $ for $ {\bf E}[\cdot|\tilde{\mathcal F}](\tilde\omega) $.

We proceed in four steps: \\
1) We first use the continuous mapping theorem (see \cite{Resnick87} p.151) to prove that for 
$ \tilde{\mathbb P} $-almost all $ \tilde{\omega} $ one has
\begin{eqnarray}  \label{lawconvergence}
\int_{(-M,-\delta)\cup(\delta,M)}zd{\mathcal N}_n^{\tilde{\omega}}(dz)\stackrel{\mathcal L}{\longrightarrow}
       \int_{(-M,-\delta)\cup(\delta,M)}zd{\mathcal N}^{\tilde{\omega}}(dz) .\end{eqnarray}
The Poisson process $ \tilde{\mathcal N}_{\tilde{\omega}} $ has $ \tilde{\mathbb{P}} $-almost surely only a finite number of points 
in the interval $ (-M,-\delta)\cup (\delta,M) $. Moreover,  one has $ \tilde{\mathbb{P}} $-almost surely that each of those
points only carries the mass one, since the Poisson process $  \tilde{\mathcal N}_{\tilde{\omega}} $ is simple. 
Now, let $ \mu $ be a point measure with only a finite number of points with mass one in 
$ (-M,-\delta)\cup(\delta,M) $ and let $ (\mu_n)_{n\in\mathbb{N}} $ be some sequence of point measures which converges toward $ \mu $ 
with respect to the vague topology on $ \mathbb{R}^\ast $. Let $ \{x_1,...,x_p\} $ be the support of $ \mu $ 
intersected with $ (-M,-\delta)\cup (\delta,M) $. According to \cite{Neveu1976} (see Lemma I.14) there exists some large $ N\in \mathbb{N} $ such that for all $ n\geq N $ the support of $ \mu_n $ intersected with $  (-M,-\delta)\cup (\delta,M) $ in exactly $ p $ point $ x_1^{(n)},...,x_p^{(n)} $ such that 
$$ \lim_{n\rightarrow\infty}x_i^{(n)}=x_i \ \ \ \mbox{for all $ i=1,...,p$.} $$ 
It then follows that 
\begin{eqnarray*}
  \lim_{n\rightarrow\infty}\int_{(-M,-\delta)\cup(\delta,M)}z\mu_n(dz)
   &=& \lim_{n\rightarrow\infty}\sum_{i=1}^p x_i^{(n)} 
   = \sum_{i=1}^p x_i 
  = \int_{(-M,-\delta)\cup(\delta,M)}z\mu(dz) .
\end{eqnarray*}
2) We now prove that for $ \tilde{\mathbb P} $-almost all $ \tilde{\omega} $ one has 
\begin{eqnarray} \label{stochasticconvergence}
 \int_{(-\infty,-M)\cup(M,\infty)}zd{\mathcal N}^{\tilde{\omega}}(dz)\stackrel{ {\rm P}_{\tilde\omega}}{\longrightarrow}0  \ \ \ \mbox{as $ M\rightarrow\infty $}.
\end{eqnarray}
This follows from the following equality which holds for $ \tilde{\mathbb P} $-almost all $ \tilde{\omega} $ 
\begin{eqnarray*}
   {\rm E}_{\tilde\omega}\left[\exp\left(it\int_M^\infty z {\mathcal N}^{\tilde{\omega}}(dz)\right)\right]
     = \exp\left((c_0+c_1)\tilde{G}^+\int_M^\infty\beta\frac{\cos(tx)-1}{x^{\beta+1}}dx+i(c_0-c_1)\tilde{G}^-\int_M^\infty\beta\frac{\sin(tx)}{x^{\beta+1}}dx\right)
\end{eqnarray*}
and from the fact that one has
$$  \left|(c_0+c_1)\tilde{G}^+\int_M^\infty\beta\frac{\cos(tx)-1}{x^{\beta+1}}dx+i(c_0-c_1)\tilde{G}^-\int_M^\infty\beta\frac{\sin(tx)}{x^{\beta+1}}dx\right| \leq 2M^{-\beta}\left((c_0+c_1)(|\tilde G^+|+|\tilde G^-|)\right) .$$
This yields 
$$    {\rm E}_{\tilde\omega}\left[\exp\left(it\int_M^\infty z {\mathcal N}^{\tilde{\omega}}(dz)\right)\right] \longrightarrow1 \ \ \ \mbox{for $ \tilde{\mathbb{P}} $ almost all $ \tilde{\omega} $ as $ M\rightarrow\infty $.} $$
The convergence in probability follows from the convergence in law of $ \int_M^\infty z {\mathcal N}^{\tilde{\omega}}(dz) $ toward zero. The other part $ \int_{-\infty}^{-M} z {\mathcal N}^{\tilde{\omega}}(dz) $ is treated in the same way.\\[1mm]
3) We now prove that  for $ \tilde{\mathbb P} $-almost all $ \tilde{\omega} $ we have
\begin{eqnarray} \label{uniformconvergence}
 \sup_{n\in\mathbb{N}} {\rm P}_{\tilde\omega}\left(\int_{(-\infty,-M)\cup(M,\infty)} z {\mathcal N}^{\tilde{\omega}}_n(dz)\neq0\right) \longrightarrow0 
  \ \ \   \mbox{as $  M\rightarrow\infty $.}
 \end{eqnarray}
For this first remember that
$$  \int_{(-\infty,-M)\cup(M,\infty)} z {\mathcal N}^{\tilde{\omega}}_n(dz)
    =\sum_{x,y\in\mathbb{Z}}a_n^{-1}\zeta_{n,x,y}h(\xi_x,\xi_y)
       {\bf 1}_{\{|a_n^{-1}\zeta_{n,x,y}h(\xi_x,\xi_y)|>M\}}  .$$
Thus this implies
\begin{eqnarray*}
  {\rm P}_{\tilde\omega}\left(\int_{\{|z|>M\}} z {\mathcal N}^{\tilde{\omega}}_n(dz)\neq0\right)
  &\leq &  {\rm P}_{\tilde\omega}\Big(\exists x,y\in\mathbb{Z}: \ |a_n^{-1}\zeta_{n,x,y}h(\xi_x,\xi_y)|>M\Big) \\
   &\leq&  \sum_{x,y\in\mathbb{Z}} {\rm P}_{\tilde\omega}\Big(|h(\xi_x,\xi_y)|>Ma_n|\zeta_{n,x,y}|^{-1}\Big) \\
   &\leq& \sum_{x,y\in\mathbb{Z}}C\Big(Ma_n|\zeta_{n,x,y}|^{-1}\Big)^{-\beta}\\
  &\leq& CM^{-\beta}a_n^{-\beta}\sum_{x,y\in\mathbb{Z}}|\zeta_{n,x,y}|^{\beta} = CM^{-\beta}G_n^+\longrightarrow0
 \ \ \ \mbox{as $ M\rightarrow\infty $},
\end{eqnarray*}
since $ \mathbb{P} $-almost surely we have  $ G_n^+\rightarrow G^+ $ as $ n\rightarrow\infty $.\\[1mm]
4) We now use the previous findings to conclude. We consider an $ \tilde\omega $ which satisfies all the requirements from points  (1) to (3) of this proof.  
For some given $ t\in\mathbb{R} $ and $ \epsilon>0  $ we use (\ref{uniformconvergence}) to find some $ M>0 $ such that
$$  \sup_{n\in\mathbb{N}} {\rm P}_{\tilde\omega}\left(\int_{(-\infty,-M)\cup(M,\infty)} z {\mathcal N}^{\tilde{\omega}}_n(dz)\neq0\right)\leq \epsilon/8  $$
By (\ref{stochasticconvergence}) we can assume without loss of generality that the $ M $ also satisfies
$$   {\rm P}_{\tilde\omega}\left(t\left|\int_{(-\infty,-M)\cup(M,\infty)}zd\mathcal N^{\tilde{\omega}}(dz)\right|\geq\epsilon/4\right)\leq\epsilon/8 . $$
Moreover, according to (\ref{lawconvergence}) we can find some $ n_0\in\mathbb{N} $ such that for all $ n\geq n_0 $ we have
$$  \left| {\rm E}_{\tilde\omega}\left[\exp\left(it\int_{(-M,-\delta)\cup(\delta,M)}z{\mathcal N}_n^{\tilde{\omega}}(dz)\right)\right]
  -  {\rm E}_{\tilde\omega}\left[\exp\left(it\int_{(-M,-\delta)\cup(\delta,M)}z{\mathcal N}^{\tilde{\omega}}(dz)\right)\right]\right|\leq\epsilon/4. $$
It now follows that
\begin{eqnarray*}
 && \left| {\rm E}_{\tilde\omega}\left[\exp\left(it\int_{(-\infty,-\delta)\cup(\delta,\infty)}z{\mathcal N}_n^{\tilde{\omega}}(dz)\right)\right]
  -  {\rm E}_{\tilde\omega}\left[\exp\left(it\int_{(-\infty,-\delta)\cup(\delta,\infty)}z{\mathcal N}^{\tilde{\omega}}(dz)\right)\right]\right| \\
 &=& \Bigg| {\rm E}_{\tilde\omega}\Bigg[\exp\left(it\int_{(-M,-\delta)\cup(\delta,M)}z{\mathcal N}_n^{\tilde{\omega}}(dz)\right)
   \left(1+\exp\left(it\int_{(-\infty,-M)\cup(M,\infty)}z{\mathcal N}_n^{\tilde{\omega}}(dz)\right)-1\right)\Bigg] \\
  &&  -  {\rm E}_{\tilde\omega}\Bigg[\exp\left(it\int_{(-M,-\delta)\cup(\delta,M)}z{\mathcal N}^{\tilde{\omega}}(dz)\right)
   \left(1+\exp\left(it\int_{(-\infty,-M)\cup(M,\infty)}z{\mathcal N}^{\tilde{\omega}}(dz)\right)-1\right)\Bigg] \Bigg| \\
&\leq& \left|  {\rm E}_{\tilde\omega}\Bigg[\exp\left(it\int_{(-M,-\delta)\cup(\delta,M)}z{\mathcal N}_n^{\tilde{\omega}}(dz)\right)\Bigg]
   -  {\rm E}_{\tilde\omega}\Bigg[\exp\left(it\int_{(-M,-\delta)\cup(\delta,M)}z{\mathcal N}^{\tilde{\omega}}(dz)\right)\Bigg]\right| \\
  &&+ 2 {\rm P}_{\tilde\omega}\left(\int_{(-\infty,-M)\cup(M,\infty)} z {\mathcal N}^{\tilde{\omega}}_n(dz)\neq0\right)
   +  2 {\rm P}_{\tilde\omega}\left(t\left|\int_{(-\infty,-M)\cup(M,\infty)}zd\mathcal N^{\tilde{\omega}}(dz)\right|\geq\epsilon/4\right)+\frac{\epsilon}{4}.
\end{eqnarray*}
Since the right side is equal to $ \epsilon $ this finishes the proof of the proposition.
\end{proof}
\begin{proof}[Proof of Proposition \ref{PPP2}]
\begin{itemize}
\item When $\beta<1$, we just prove that $\lim_{\delta\rightarrow 0}\limsup_{n\rightarrow\infty}\mathbf E[|T_n(\delta)||\tilde{\mathcal F}]=0$.
Due to Item (iii) of Assumption \ref{HYP}, we have
\begin{eqnarray*}
\mathbf E[|T_n(\delta)||\tilde{\mathcal F}]&\le&
\sum_{x,y} \mathbf E\Big[a_n^{-1}|\zeta_{n,x,y}h(\xi_x,\xi_y)|\mathbf 1_{\{a_n^{-1}|\zeta_{n,x,y}h(\xi_x,\xi_y)|\le \delta\}}\Big|\tilde{\mathcal F}\Big]\\
&\le&\sum_{x,y}
      \int_0^{\delta}\mathbf P\Big(\delta\ge a_n^{-1}|h(\xi_x,\xi_y)\zeta_{n,x,y}|>z\Big|\tilde{\mathcal F}\Big)\, dz\\
&\le&\sum_{x,y}
      \int_0^{\delta}\mathbf P\Big(|h(\xi_x,\xi_y)\zeta_{n,x,y}|>a_nz\Big|\tilde{\mathcal F}\Big)\, dz\\
&\le& (\Vert L_0\Vert_\infty+\Vert L_1\Vert_\infty)
\sum_{x,y} \int_0^{\delta}a_n^{-\beta}z^{-\beta}(\zeta_{n,x,y})^\beta\, dz\\
&\le& (\Vert L_0\Vert_\infty+\Vert L_1\Vert_\infty)\sum_{x,y}\frac{a_n^{-\beta}\delta^{1-\beta}}{1-\beta}
      (\zeta_{n,x,y})^\beta\\
&\le& (\Vert L_0\Vert_\infty+\Vert L_1\Vert_\infty)\frac{\delta^{1-\beta}}{1-\beta}\tilde G_n^+.
\end{eqnarray*}
So 
$\lim_{\delta\rightarrow 0}\limsup_{n\rightarrow\infty}\mathbf E[|T_n(\delta)||\tilde{\mathcal F}]\le \lim_{\delta\rightarrow 0}(
\Vert L_0\Vert_\infty+\Vert L_1\Vert_\infty)\frac{\delta^{1-\beta}}{1-\beta}\tilde G^+=0,$
since $\beta<1$.

\item Assume here that $\beta\in(1,2)$.
Observe that, due to Item (v) of Assumption \ref{HYP}, we have

\begin{eqnarray*}
&& \mathbb E\big[h(\xi_1,\xi_2)\mathbf{1}_{\left\{|h(\xi_1,\xi_2)|\le M \right\}}\big]\\
&=&-\mathbb E\left[h(\xi_1,\xi_2)\mathbf{1}_{\{|h(\xi_1,\xi_2)|>M\}}\right]\\
&=&\int_{0}^{+\infty}\mathbb P\Big(h(\xi_1,\xi_2)<-\max(z,M)\Big)\,dz
-\int_{0}^{+\infty}\mathbb P\Big(h(\xi_1,\xi_2)>\max(z,M)\Big)\,dz\\
&=&M\Big(\mathbb P(h(\xi_1,\xi_2)<-M)-\mathbb P(h(\xi_1,\xi_2)>M)\Big)+\int_{M}^{+\infty}\mathbb P(h(\xi_1,\xi_2)<-z)\,dz\\
&\ &\ \ \ \ \ \ \ \ \ \ 
-\int_{M}^{+\infty}\mathbb P(h(\xi_1,\xi_2)>z)\,dz.
\end{eqnarray*}
But, due to Item (iii) of Assumption \ref{HYP}, as $x$ goes to infinity, we have
$$\mathbb P(h(\xi_1,\xi_2)>x)=c_0 x^{-\beta}+o(x^{-\beta}),\ \ 
\mathbb P(h(\xi_1,\xi_2)<-x)= c_1 x^{-\beta}+o(x^{-\beta}), $$
$$\int_x^{+\infty}\mathbb P(h(\xi_1,\xi_2)>z)\, dz= c_0\frac{x^{1-\beta}}{\beta-1}+o(x^{1-\beta}), $$
$$ \int_x^{+\infty}\mathbb P(h(\xi_1,\xi_2)<-z)\, dz= c_1\frac{x^{1-\beta}}{\beta-1}+o(x^{1-\beta})$$
and
$$\forall x>0,\ \ 
 \int_x^{+\infty}\Big(\mathbb P\big(h(\xi_1,\xi_2)>z\big)+\mathbb P\big(h(\xi_1,\xi_2)<-z\big)\Big)
\, dz\le (\Vert L_0\Vert_\infty+\Vert L_1\Vert_\infty)\frac{x^{1-\beta}}{\beta-1}.$$
Therefore, we obtain
\begin{equation}\label{SYM}
\mathbb E\left[h(\xi_1,\xi_2)\mathbf{1}_{\{|h(\xi_1,\xi_2)|\le M \}}\right]=M^{1-\beta}\left(\frac{\beta }{\beta-1}(c_1-c_0)+\epsilon_M\right),
\end{equation}
where $\lim_{M\rightarrow+\infty}\epsilon_M=0$ and $\sup_{M>0}\epsilon_M<\infty$.

\item When $\beta=1$, due to Item (vii) of Assumption \ref{HYP}, we have $c_0=c_1$ and  (\ref{SYM}) holds also true.
\item Assume now that $\beta\in[1,2)$.
We will prove that $\lim_{\delta\rightarrow 0}\limsup_{n\rightarrow\infty}\mathbf E[(T_n(\delta))^2|\tilde{\mathcal F}]=0$.
We have
$$ \mathbf E[(T_n(\delta))^2|\tilde{\mathcal F}]
=\sum_{x,y,x',y'\in\mathbb Z^{d_0}}{\mathbf E}[T_{n,x,y}T_{n,x',y'}|\tilde{\mathcal F}],$$
with 
$$T_{n,x,y}:=a_n^{-1}h(\xi_x,\xi_y)\zeta_{n,x,y}
{\mathbf 1}_{\{|h(\xi_x,\xi_y)\zeta_{n,x,y}|\leq a_n\delta\}}
+a_n^{-\beta}(c_0-c_1)\frac{\beta\delta^{1-\beta}}{\beta-1} |\zeta_{n,x,y}|_-^\beta$$
(recall that $c_0=c_1$ when $\beta=1$).

\begin{itemize}

\item \underline{ Contribution of $(x,y,x',y')$ such that $\{x,y\}\cap\{x',y'\}=\emptyset$.}

We set $E_1$ for the set of such $(x,y,x',y')$. Let $(x,y,x',y')\in E_1$. 
Since $h(\xi_{x},\xi_y)$ and $h(\xi_{x'},\xi_{y'})$ are independent conditionally to $\tilde{\mathcal F}$, we have
$${\mathbf E}[T_{n,x,y}T_{n,x',y'}|\tilde{\mathcal F}]={\mathbf E}[T_{n,x,y}|\tilde{\mathcal F}]
{\mathbf E}[T_{n,x',y'}|\tilde{\mathcal F}].$$
Now, due to (\ref{SYM}), we have
$$
\left|\sum_{x,y\in\mathbb Z^{d_0}}\mathbf E[T_{n,x,y}|\tilde{\mathcal F}]\right|\leq      \delta^{1-\beta}\sum_{x,y\in\mathbb Z^{d_0}}a_n^{-\beta}
|\zeta_{n,x,y}|_+^{\beta}\epsilon_{a_n\delta|\zeta_{n,x,y}|^{-1}}.
$$
Now, due to (\ref{MAJO}), for every $\gamma_0>0$, if $n$ is large enough,
we have
$$a_n^{-1}\sup_{x,y\in\mathbb Z^{d_0}}|\zeta_{n,x,y}|\le
    n^{-\frac 2{\alpha_0\beta}+2\varepsilon+\gamma_0}.$$
Combining this with $\lim_{n\rightarrow +\infty}\tilde G_n^+=\tilde G^+$
and with $\lim_{M\rightarrow+\infty}\epsilon_M=0$, we obtain
\begin{equation}\label{SUM1}
\limsup_{n\rightarrow +\infty}
 \sum_{x,y\in\mathbb Z^{d_0}}\mathbf E[T_{n,x,y}|\tilde{\mathcal F}]=0,
\end{equation}
 since $ \beta\epsilon < 1/\alpha_0 $.
This implies
$$\forall\delta>0,\ \ \limsup_{n\rightarrow +\infty}\sum_{(x,y,x',y')\in E_1} {\mathbf E}[T_{n,x,y}T_{n,x',y'}|\tilde{\mathcal F}]=0.$$

\item \underline{Contribution of $(x,y,x',y')$ such that $\{x,y\}=\{x',y'\}$.}

Let us write $E_2$ for the set of such $(x,y,x',y')$. Observe that

$$ \sum_{(x,y,x',y')\in E_2}{\mathbf E}[T_{n,x,y}T_{n,x',y'}|\tilde{\mathcal F}]\le 2\sum_{x,y\in\mathbb Z^{d_0}}{\mathbf E}[T_{n,x,y}^2|\tilde{\mathcal F}].$$

First, using Item (iii) of Assumption \ref{HYP}, we notice that

\begin{eqnarray*}
 && a_n^{-2}\sum_{x,y\in\mathbb Z^{d_0}}\mathbf E\Big[(h(\xi_1,\xi_2)\zeta_{n,x,y})^2\mathbf 
1_{\{|h(\xi_1,\xi_2)\zeta_{n,x,y}|\le a_n\delta\}}\Big|\tilde{\mathcal F}\Big] \\
&=& \sum_{x,y\in\mathbb Z^{d_0}}\int_0^{\delta^2}\mathbb P\Big(\sqrt{z}<a_n^{-1}|h(\xi_1,\xi_2)\zeta_{n,x,y}|<\delta\Big|\tilde{\mathcal F}\Big)\, dz\\
&\le& \sum_{x,y\in\mathbb Z^{d_0}}\int_0^{\delta^2}\mathbb P\Big(\sqrt{z}<a_n^{-1}|h(\xi_1,\xi_2)\zeta_{n,x,y}|\Big|\tilde{\mathcal F}\Big)\, dz\\
&\le&(\Vert L_0\Vert_\infty+\Vert L_1\Vert_\infty)
\sum_{x,y\in\mathbb Z^{d_0}}\int_0^{\delta^2}a_n^{-\beta}z^{-\frac{\beta}2}|\zeta_{n,x,y}|^\beta\, dz\\
&\le&(\Vert L_0\Vert_\infty+\Vert L_1\Vert_\infty)a_n^{-\beta}
\sum_{x,y\in\mathbb Z^{d_0}}|\zeta_{n,x,y}|^\beta \frac{\delta^{2\left(1-\frac\beta 2\right)}}{1-\frac\beta 2}\\
&\le&(\Vert L_0\Vert_\infty+\Vert L_1\Vert_\infty)\tilde G_n^+ \frac{\delta^{2-\beta}}{1-\frac\beta 2}.
\end{eqnarray*}
Therefore
\begin{equation}
\lim_{\delta\rightarrow 0}\limsup_{n\rightarrow+\infty}a_n^{-2}\sum_{x,y\in\mathbb Z^{d_0}}\mathbf E\Big[(h(\xi_1,\xi_2)\zeta_{n,x,y})^2\mathbf 
1_{\{|h(\xi_1,\xi_2)\zeta_{n,x,y}|\le a_n\delta\}}\Big|\tilde{\mathcal F}\Big]
=0.
\end{equation}

Second, using (\ref{an-1}) and the definition of $\tilde N_n^*$ and $\tilde R_n$, for every $\gamma_0>0$, for $n$ large enough,  we have
\begin{eqnarray*}
 a_n^{-2\beta}\left|\sum_{x,y\in\mathbb Z^{d_0}}\left((c_0-c_1)^2\frac{\beta^2\delta^{2-2\beta}}{(\beta-1)^2}|\zeta_{n,x,y}|_-^{2\beta}\right) \right|
&\le& (c_0-c_1)^2\frac{\beta^2\delta^{2-2\beta}}{(\beta-1)^2} a_n^{-2\beta}\tilde R_n^2  (\tilde N_n^\ast)^{4\beta}\\
&\le& n^{-\frac{2}{\alpha_0}+2\epsilon+4\beta\epsilon+\gamma_0}\delta^{2-2\beta}.
\end{eqnarray*}
So, since  $ \epsilon>0 $ satisfies $ (3+4\beta)\epsilon<\frac{1}{\alpha_0} $ we have that
\begin{equation}
\lim_{\delta\rightarrow 0}\limsup_{n\rightarrow+\infty}
a_n^{-2\beta}\sum_{x,y\in\mathbb Z^{d_0}}\left((c_0-c_1)^2\frac{\beta^2\delta^{2-2\beta}}
{(\beta-1)^2}|\zeta_{n,x,y}|_-^{2\beta}\right)=0.
\end{equation}
Finally this shows
$$\lim_{\delta\rightarrow 0}\limsup_{n\rightarrow+\infty}
\sum_{(x,y,x',y')\in E_2}{\mathbf E}[T_{n,x,y}T_{n,x',y'}|\tilde{\mathcal F}]=0.$$

\item \underline{Contribution of $(x,y,x',y')$ such that $\#(\{x,y\}\cap\{x',y'\})=1$.}

Let us write $E_3$ for the set of such $(x,y,x',y')$.
Observe that we have
$$\displaystyle \sum_{(x,y,x',y')\in E_3}{\mathbf E}[T_{n,x,y}T_{n,x',y'}|\tilde{\mathcal F}]=4 \sum_{x,y,z:x\ne y,x\ne z,y\ne z}\mathbb E\left[T_{n,x,y}T_{n,x,z}|\tilde{\mathcal F}\right]$$

\begin{itemize}
\item Assume that $1\le\beta<4/3$. 
We set $U_{n,x,y}:=a_n^{-1}h(\xi_x,\xi_y)\zeta_{n,x,y}
{\mathbf 1}_{\{|h(\xi_x,\xi_y)\zeta_{n,x,y}|\le a_n\delta\}}$.
Observe that
\begin{equation}\label{EEEE1}
T_{n,x,y}=U_{n,x,y}+a_n^{-\beta}(c_0-c_1)\frac{\beta\delta^{1-\beta}}{\beta-1}
|\zeta_{n,x,y}|_-^\beta
\end{equation}
(recall that we assume $c_0=c_1$ if $\beta=1$)
and that, due to (\ref{SYM}),
\begin{equation}\label{EEEE2}
\mathbb E[U_{n,x,y}|\tilde{\mathcal F}]=a_n^{-\beta}\delta^{1-\beta}|\zeta_{n,x,y}|_-^\beta
\left[(c_1-c_0)\frac{\beta}{\beta-1}+
\epsilon_{a_n\delta|\zeta_{n,x,y}|^{-1}}\right].
\end{equation}
Now, (\ref{MAJO}) ensures that 
\begin{equation}\label{EEEE3}
\lim_{n\rightarrow+\infty}\sup_{x,y}\epsilon_{a_n\delta|\zeta_{n,x,y}|^{-1}}=0.
\end{equation}
Moreover, we observe that, due to (\ref{an-1}) and to the definition of 
$\tilde N_n^*$ and of $\tilde R_n$, we have, for every $\gamma_0>0$
and every $n$ large enough,
\begin{eqnarray*}
\sum_{x,y,z\in\mathbb Z^{d_0}}a_n^{-2\beta}|\zeta_{n,x,y}|^\beta|\zeta_{n,x,z}|^\beta
&\le&\tilde R_n^3a_n^{-2\beta}\left(\tilde N_{n}^*\right)^{4\beta}\\
&\le& n^{-\frac{1}{\alpha_0}+3\epsilon+4\beta\varepsilon+\gamma_0}.
\end{eqnarray*}
Now, since $ (3+4\beta)\epsilon<\frac{1}{\alpha_0}  $ we conclude that
\begin{equation}\label{EEEE4}
\limsup_{n\rightarrow+\infty}\sum_{x,y,z\in\mathbb Z^{d_0}}a_n^{-2\beta}|\zeta_{n,x,z}|^\beta|\zeta_{n,x,y}|^\beta=0.
\end{equation}
Observe moreover that, due to Item (iv) of Assumption \ref{HYP}, we have

\begin{eqnarray*}
 && \mathbb E\left[|U_{n,x,y}U_{n,x,z}||\tilde{\mathcal F}\right]\\
&\le&\int_{(0,\delta)^2}\mathbb P(
a_n^{-1}|h(\xi_1,\xi_2)\zeta_{n,x,y}|>u,a_n^{-1}|h(\xi_1,\xi_3)\zeta_{n,x,z}|>v|\tilde{\mathcal F})\, dudv\\
&\le& C_0\left[a_n^{-1}|\zeta_{n,x,y}|+\int_{a_n^{-1}|\zeta_{n,x,z}|}^\delta
u^{-\gamma}a_n^{-\gamma}|\zeta_{n,x,y}|^\gamma\, du\right] \\
&& \times \left[a_n^{-1}|\zeta_{n,x,z}|+\int_{a_n^{-1}|\zeta_{n,x,z}|}^\delta
v^{-\gamma}a_n^{-\gamma}|\zeta_{n,x,z}|^\gamma\, dv\right]\\
&\le&C_0\left[a_n^{-1}|\zeta_{n,x,y}|^1+
\frac{\delta^{1-\gamma}-a_n^{\gamma-1}|\zeta_{n,x,z}|^{1-\gamma}}{1-\gamma}a_n^{-\gamma}|\zeta_{n,x,y}|^\gamma\right]\\
&\ &\ \ \times\left[a_n^{-1}|\zeta_{n,x,z}|+\frac{\delta^{1-\gamma}-
a_n^{\gamma-1}|\zeta_{n,x,z}|^{1-\gamma}}{1-\gamma}a_n^{-\gamma}|\zeta_{n,x,z}|^\gamma\right]\\
&\le&C_\delta a_n^{-2\gamma'}|\zeta_{n,x,y}\zeta_{n,x,z}|^{\gamma'}\ \ \mbox{where}\ \gamma'=\min(1,\gamma)
\end{eqnarray*}
for $n$ large enough and some $ C_\delta >0 $. Indeed, due to (\ref{MAJO}) we have 
$ a_n^{-1}\sup_{x,y}|\zeta_{n,x,y}|\le 1$ for large $ n $.  Again using (\ref{MAJO})
and to the definition of $\tilde R_n$, for every $\gamma_0>0$, we have

\begin{eqnarray*}
\displaystyle \sum_{x,y,z\in\mathbb Z^{d_0}}\mathbb E\left[|U_{n,x,y}U_{n,x,z}||\tilde{\mathcal F}\right]&\le& C_\delta\tilde R_n^3a_n^{-2\gamma'}\sup_{x,y}|\zeta_{n,x,y}|^{2\gamma'}\\
&\le&  n^{\frac 3{\alpha_0}-\frac{4\gamma'}{\alpha_0\beta}+7\varepsilon+\gamma_0},
\end{eqnarray*}
for $n$ large enough. Recall that we have chosen $\varepsilon$
such that $\frac 3{\alpha_0}-\frac{4\gamma'}{\alpha_0\beta}+7\varepsilon<0$.
Hence, we obtain
\begin{equation}\label{EEEE5}
\forall\delta>0,\ \ \ \limsup_{n\rightarrow+\infty}\sum_{x,y,z}\mathbb E[|U_{n,x,y}U_{n,x,z}|]=0.
\end{equation}
Now putting (\ref{EEEE1}), (\ref{EEEE2}), (\ref{EEEE3}), (\ref{EEEE4})
and (\ref{EEEE5}) all together, we conclude that
$$\forall\delta>0,\ \ \limsup_{n\rightarrow+\infty}
\displaystyle \sum_{(x,y,x',y')\in E_3}{\mathbf E}[T_{n,x,y}T_{n,x',y'}|\tilde{\mathcal F}]=0.$$
\item Assume now that $\beta\ge\frac 43$. Observe that,
with the notation of Item (vi) of Assumption \ref{HYP}, 
we have
$$T_{n,x,y}=a_n^{-1}\zeta_{n,x,y}{\mathbf h}_{(a_n\delta|\zeta_{n,x,y}|^{-1})}(\xi_x,\xi_y).$$
Due to this Item (vi), to the definition of $\tilde R_n$ and
to (\ref{MAJO}), for every $\gamma_0>0$, we have almost surely

\begin{eqnarray*}
\sum_{x,y,z\in\mathbb Z^{d_0}}|\mathbb E[T_{n,x,y}T_{n,x,z}|\tilde{\mathcal F}]|
&\le& C'_0a_n^{-2}\sum_{x,y,z\in \mathbb Z^{d_0}}|\zeta_{n,x,y}\zeta_{n,x,z}| \left(
   a_n^2\delta^2|\zeta_{n,x,y}\zeta_{n,x',y'}|^{-1}\right)^{-\theta'}\\
&\le& \delta^{-2\theta'}\tilde R_n^3 \left(a_n^{-1}(\tilde{N}_n^*)^2
\right)^{2(\theta'+1)}\\
&\le& n^{\frac 1{\alpha_0}\left(3-\frac {4(\theta'+1)}\beta\right)+
  (4\theta'+7)\varepsilon+\gamma_0},
\end{eqnarray*}
for $n$ large enough. Since $\frac 1{\alpha_0}\left(3-\frac {4(\theta'+1)}\beta\right)+
  (4\theta'+7)\varepsilon<0$, we obtain
$$\forall\delta>0,\ \ 
\limsup_{n\rightarrow +\infty}\sum_{(x,y,x',y')\in E_3}|\mathbb E[T_{n,x,y}T_{n,x,z}|\tilde{\mathcal F}]|=0.$$
\end{itemize}
\end{itemize}
So, finally, for $\beta\in[1,2)$, there exists $\tilde C>0$ such that, for
every nonnegative $n$ and every $\delta>0$, we have
$\limsup_{n\rightarrow+\infty}\mathbf E[(T_n(\delta))^2]\le \tilde C \delta^{2-\beta}$.
\end{itemize}
\end{proof}

\begin{proof}[Proof of Proposition \ref{PPP3}]
The following proof can be assembled from \cite{Feller}. 
We will use the constants $  I_0:=-\int_0^\infty\frac{\sin y}{y^\beta}\, dy $ and $ J_0:=-\tan\frac{\pi\beta}2I_0 $. 
Due to the exponential formula, we have
\begin{eqnarray*}
\mathbb E\left[e^{it\int_{\{|x|\ge\delta\}}x\, d\mathcal P(x)  }\right]&=&
\exp\left(\int_{\{|x|\ge\delta\}}(e^{itx}-1)(a\mathbf{1}_{\{x>0\}}+
 b\mathbf{1}_{\{x<0\}})\beta|x|^{-\beta-1}\, dx\right)\\
&=&\exp\left((a+b)\int_\delta^{+\infty}\frac{\cos(tx)-1}{x^{\beta+1}}
   \beta\, dx+i(a-b)\int_\delta^{+\infty}\frac{\sin(tx)}{x^{\beta+1}}\beta\, dx\right)
\end{eqnarray*}
Assume first that $\beta<1$. Due to \cite[p. 568]{Feller}, we have
$$ \lim_{\delta\rightarrow 0}\int_\delta^{+\infty}\frac{e^{itx}-1}{x^{\beta+1}}
\beta\, dx=-|t|^\beta\Gamma(1-\beta)e^{-\frac{i\pi\beta}2}=
|t|^\beta(I_0+iJ_0). $$
So
$\lim_{\delta\rightarrow 0}
\mathbb E\left[e^{it\int_{\{|x|\ge\delta\}}x\, d\mathcal P(x)  }\right]=
\Phi_{a+b,a-b,\beta}(t)$.\\
Assume now that $\beta=1$. Then 
$$\lim_{\delta\rightarrow 0}\int_\delta^{+\infty}\frac{\cos(tx)-1}{x^2} \, dx=\int_0^{+\infty}\frac{\cos(tx)-1}{x^2}\, dx
   =|t|\int_0^{+\infty}\frac{\cos(y)-1}{y^2}\, dy=-\frac{\pi}{2}|t|$$
and, since $\sin(tx)=\sgn(t)\sin(|t|x)$, we have
$$\int_\delta^{+\infty}\frac{\sin(tx)}{x^2}\, dx= t\int_{\delta |t|}
^{+\infty}\frac{\sin y}{y^2}\, dy$$
and so
$$\int_\delta^{+\infty}\frac{\sin(tx)}{x^2}\, dx- t\int_\delta
^{+\infty}\frac{\sin x}{x^2}\, dx=t\int_{\delta|t|}^{\delta}
\frac{\sin y}{y^2}\, dy\sim_{\delta\rightarrow 0}t\int_{\delta |t|}^{\delta}\frac{dy}y=-t\log|t|.$$
Hence we have in that case that
$$ \lim_{\delta\rightarrow0}\mathbb{E}\left[\exp\left(it\left(\int_{|x|>\delta}xd\mathcal P(x)-(a-b)\int_\delta^\infty\frac{\sin x}{x^2}dx\right)\right)\right] 
      =\Phi_{a+b,a-b,1}(t) .$$
Assume finally $\beta>1$. Due to \cite[p.568-569]{Feller}, we have
$$\lim_{\delta\rightarrow 0}\int_\delta^{\infty}\frac{e^{itx}-1-itx}{x^{\beta+1}}\, \beta\, dx = \int_0^{+\infty}\frac{e^{itx}-1-itx}{x^{\beta+1}} \beta\, dx
         =|t|^\beta\frac{\Gamma(3-\beta)e^{-\frac{i\pi\beta}2}}
  {(2-\beta)(\beta-1)}=|t|^\beta(I_0+iJ_0) .$$
So
$$\lim_{\delta\rightarrow 0}\mathbb E\left[
    e^{it\int_{\{|x|\ge\delta\}}x\, d{\mathcal P}(x)-it(a-b)\beta\frac{\delta^{1-\beta}}{\beta-1} }\right]=\Phi_{a+b,a-b,\beta}(t).$$
\end{proof}


\section{Tightness}\label{tightness}

Here we treat case $ \alpha_0>1 $ (i.e. the case where $(S_n)_n$
is recurrent and $\alpha>d_0=1$).
The tightness proof follows essentially the one given in Kesten and Spitzer \cite{kest}.
We need the following lemma from \cite{kest}.

\begin{lem}[Lemma 1 of \cite{kest}]\label{lem1KS}
For all $ \epsilon>0 $ there exists some $ A>0 $ such that for all $ t\geq 1 $ one has
$$ \mathbb{P}\left(\exists x\in\mathbb{Z}: |x|>At^{1/\alpha} \ \mbox{and} \ N_t(x)>0 \right) \leq \epsilon .$$
\end{lem}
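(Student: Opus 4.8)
The plan is to rewrite the event in terms of the maximal displacement of the walk and then apply a maximal inequality. Since $N_t(x)$ depends on $t$ only through $\lfloor t\rfloor$, and since for $t\ge 1$ one has $\{x:|x|>At^{1/\alpha}\}\subseteq\{x:|x|>A\lfloor t\rfloor^{1/\alpha}\}$, it is enough to bound, uniformly in the integer $n:=\lfloor t\rfloor\ge 1$, the probability
\[
\mathbb{P}\Big(\exists x\in\mathbb{Z}:\ |x|>An^{1/\alpha}\ \text{and}\ N_n(x)>0\Big)
=\mathbb{P}\Big(\max_{1\le k\le n}|S_k|>An^{1/\alpha}\Big),
\]
the equality holding because a site at distance $>An^{1/\alpha}$ is visited before time $n$ exactly when the walk leaves $[-An^{1/\alpha},An^{1/\alpha}]$ during its first $n$ steps.

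First I would record that the convergence in distribution of $(n^{-1/\alpha}S_n)_n$ (Assumption (A)) makes this family tight, so that
\[
c(A):=\sup_{m\ge 1}\mathbb{P}\big(|S_m|>\tfrac A2\, m^{1/\alpha}\big)\longrightarrow 0\quad\text{as }A\to\infty .
\]
Next, since the increments of $(S_k)_k$ are i.i.d., $S_n-S_k$ has the law of $S_{n-k}$, and because $An^{1/\alpha}\ge A(n-k)^{1/\alpha}$ one gets $\mathbb{P}\big(|S_n-S_k|>\tfrac A2 n^{1/\alpha}\big)\le c(A)$ for all $0\le k\le n$. Applying Ottaviani's (Lévy-type) maximal inequality with $a=b=\tfrac A2 n^{1/\alpha}$,
\[
\mathbb{P}\Big(\max_{1\le k\le n}|S_k|>An^{1/\alpha}\Big)\le\frac{\mathbb{P}\big(|S_n|>\tfrac A2 n^{1/\alpha}\big)}{1-\max_{1\le k\le n}\mathbb{P}\big(|S_n-S_k|>\tfrac A2 n^{1/\alpha}\big)}\le\frac{c(A)}{1-c(A)},
\]
which is valid as soon as $c(A)<1$. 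Given $\epsilon>0$, choosing $A$ large enough that $c(A)\le\min(1/2,\epsilon/2)$ makes the right-hand side $\le\epsilon$ for every $n\ge 1$, hence $\le\epsilon$ for every $t\ge 1$ by the reduction above. Alternatively, one may invoke the functional convergence of $(n^{-1/\alpha}S_{\lfloor n\cdot\rfloor})_n$ towards $(Y_\cdot)$ on $[0,1]$, whose tightness yields a compact $K\subset D([0,1])$ with $\mathbb{P}(\,\cdot\in K)\ge 1-\epsilon$ uniformly in $n$; since such a $K$ is uniformly bounded in supremum norm, the same maximal estimate follows.

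The argument is essentially elementary; the only point needing care is the uniformity in $t$ — handled by passing to $n=\lfloor t\rfloor$ and using $t^{1/\alpha}\ge n^{1/\alpha}$ — together with the uniform control, over all $n$ and all $0\le k\le n$, of the tails of $S_{n-k}/n^{1/\alpha}$, which is exactly what tightness of $(m^{-1/\alpha}S_m)_m$ provides. This is the step I would expect to be the main (and rather mild) obstacle, but it is immediate from Assumption (A).
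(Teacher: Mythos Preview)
Your argument is correct. The paper does not supply its own proof of this lemma; it is simply quoted as Lemma~1 of Kesten--Spitzer \cite{kest}, so there is nothing to compare against here beyond noting that the original reference uses essentially the same idea (tightness of $(n^{-1/\alpha}S_n)_n$ combined with a L\'evy/Ottaviani-type maximal inequality to pass from the endpoint to the running maximum).
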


\begin{lem} \label{QuadratAbschaetzungen}
We have
\begin{eqnarray}
  \mathbb{E}\left[\sum_{x\in\mathbb{Z}}N_n^2(x)\right] =  O(n^{2-\frac{1}{\alpha}}) \ \ \ \mbox{and} \ \ \ 
  \mathbb{E}\left[\Big(\sum_{x\in\mathbb{Z}}N_n^2(x)\Big)^2\right] = O(n^{4-\frac{2}{\alpha}}) .
\end{eqnarray}
\end{lem}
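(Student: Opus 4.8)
The plan is to reduce both estimates to the elementary identity
$$\sum_{x\in\mathbb Z}N_n^2(x)=\sum_{i,j=1}^n\mathbf 1_{\{S_i=S_j\}},$$
which holds because $N_n(x)=\sum_{i=1}^n\mathbf 1_{\{S_i=x\}}$. Taking expectations and using that $S_i-S_j$ has the same law as $S_{|i-j|}$ for a walk with i.i.d. increments gives $\mathbb E[\sum_xN_n^2(x)]=\sum_{i,j=1}^n\mathbb P(S_{|i-j|}=0)\le 2n\sum_{k=0}^n\mathbb P(S_k=0)$. The analytic input is the Gnedenko local limit theorem: since the increments lie in the domain of normal attraction of a strictly $\alpha$-stable law and the walk is aperiodic (the hypothesis $\forall x\,\exists n:\mathbb P(S_n=x)>0$), there is $C>0$ with $\sup_{x\in\mathbb Z}\mathbb P(S_k=x)\le C(1+k)^{-1/\alpha}$ for all $k\ge 0$; in particular $\sum_{k=0}^n\mathbb P(S_k=0)=O(n^{1-1/\alpha})$ since $1/\alpha<1$, which yields the first bound $O(n^{2-1/\alpha})$. (Alternatively this moment estimate can be quoted from the literature on self-intersection local times, e.g. \cite{jainpruit83} or \cite{LeGallRosen}.)

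For the second moment I would square the identity, so that
$$\mathbb E\Big[\Big(\sum_xN_n^2(x)\Big)^2\Big]=\sum_{i,j,k,l=1}^n\mathbb P(S_i=S_j,\,S_k=S_l).$$
Using the symmetry under exchanging $i\leftrightarrow j$, $k\leftrightarrow l$ and the two pairs, it suffices up to a universal constant to treat tuples with $i\le j$, $k\le l$, $i\le k$, and then to split into the three configurations (a) $i\le k\le l\le j$ (nested), (b) $i\le k\le j\le l$ (overlapping), (c) $i\le j\le k\le l$ (disjoint). In each case the increments over the three consecutive blocks cut out by the ordered times are independent, the two events $\{S_i=S_j\}$ and $\{S_k=S_l\}$ translate into two linear constraints on those increments, and, writing $p,q,r$ for the block lengths, one reads off: in case (a), $\mathbb P=\mathbb P(S_q=0)\,\mathbb P(S_{p+r}=0)$; in case (c), $\mathbb P=\mathbb P(S_p=0)\,\mathbb P(S_r=0)$; and in case (b), conditioning on the middle increment and using $\sup_x\mathbb P(S_p=x)\le C(1+p)^{-1/\alpha}$, $\mathbb P\le C(1+p)^{-1/\alpha}(1+q+r)^{-1/\alpha}$. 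Summing the free outermost time index off (a factor $\le n$) and applying $\mathbb P(S_k=0)\le C(1+k)^{-1/\alpha}$, each configuration contributes a sum of the form $n\sum_{p,q,r\ge 0,\;p+q+r\le n}(1+a)^{-1/\alpha}(1+b)^{-1/\alpha}$ with $(a,b)$ equal to $(q,p+r)$, $(p,q+r)$, $(p,r)$ respectively; via $\sum_{k=0}^n(1+k)^{-1/\alpha}=O(n^{1-1/\alpha})$ and $\sum_{s=0}^n(1+s)^{1-1/\alpha}=O(n^{2-1/\alpha})$ each of these is $O(n^{4-2/\alpha})$ (in case (c) the free summation over $q$ supplies the extra factor $n$ explicitly).

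The bookkeeping of the block decompositions and the three model sums are routine. The one point that requires care, and the only real obstacle, is the overlapping configuration (b): the crude estimate $\mathbb P(S_i=S_j,S_k=S_l)\le\sqrt{\mathbb P(S_{j-i}=0)\mathbb P(S_{l-k}=0)}$ produces a power of $n$ strictly larger than $4-2/\alpha$, so one genuinely has to condition on the shared middle increment and invoke the uniform-in-$x$ local limit estimate rather than merely the at-the-origin bound. Everything else proceeds as in \cite{kest}.
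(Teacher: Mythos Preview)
Your argument is correct. The paper itself does not prove this lemma; it merely cites the two references (\cite{kest}, formula (2.13), for the first moment, and \cite{guil}, Lemma~2.1, for the second). Your self-contained derivation via the self-intersection identity $\sum_xN_n^2(x)=\sum_{i,j}\mathbf 1_{\{S_i=S_j\}}$ together with the uniform local bound $\sup_x\mathbb P(S_k=x)\le C(1+k)^{-1/\alpha}$ is in fact the standard route, and is essentially how those cited references proceed. Your treatment of the second moment---splitting ordered quadruples into nested, overlapping and disjoint configurations, factoring via independence of the block increments, and handling the overlapping case by conditioning on the middle increment to access the uniform-in-$x$ estimate rather than only the at-the-origin value---is exactly the right way to avoid the overestimate you flag, and the three resulting sums are each $O(n^{4-2/\alpha})$ as you compute. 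The aperiodicity remark is not strictly needed: the bound $\sup_x\mathbb P(S_k=x)\le C(1+k)^{-1/\alpha}$ already follows from the normal-domain-of-attraction hypothesis via the local limit theorem (or a concentration-function inequality), uniformly over the lattice/non-lattice cases.
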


\begin{proof}
The first one is  formula (2.13) from \cite{kest} and the second one can be found in \cite[Lemma 2.1]{guil}.
\end{proof}

\begin{prop}
The sequence of stochastic processes 
$$ U_t^n:=n^{-2\delta}\sum_{x,y\in\mathbb{Z}}N_{\lfloor nt \rfloor}(x)N_{\lfloor nt \rfloor}(y)h(\xi_x,\xi_y); \ t\geq0 $$
is tight in $ C(0,T)  $ with sup-norm.
\end{prop}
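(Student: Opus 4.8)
The plan is to adapt, conditionally on the walk, the tightness argument of Kesten and Spitzer \cite{kest}, and to combine it with the truncation machinery already developed in Section \ref{STEP3}. Write $\mathcal F_S$ for the $\sigma$-algebra generated by $S$ and, for $0\le s<t$ and $x\in\mathbb Z$, $d^{(n)}_{s,t}(x):=N_{\lfloor nt\rfloor}(x)-N_{\lfloor ns\rfloor}(x)\ge0$; the argument is driven by the product-rule identity
$$N_{\lfloor nt\rfloor}(x)N_{\lfloor nt\rfloor}(y)-N_{\lfloor ns\rfloor}(x)N_{\lfloor ns\rfloor}(y)=d^{(n)}_{s,t}(x)\,N_{\lfloor nt\rfloor}(y)+N_{\lfloor ns\rfloor}(x)\,d^{(n)}_{s,t}(y)$$
and by monotonicity in $t$ of the occupation times. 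By Lemma \ref{lem1KS} the walk stays, up to an event of probability $<\epsilon$ uniformly in $n$, inside $B_{A,n}:=\{|x|\le An^{1/\alpha}\}$ before time $\lfloor nT\rfloor$, so every spatial sum may be restricted to the $O(n^{2/\alpha})$ sites of $B_{A,n}$. Next, for a fixed small $\delta>0$ one splits $U^n=U^{n,\mathrm{lg}}(\delta)+U^{n,\mathrm{sm}}(\delta)$, where $U^{n,\mathrm{lg}}(\delta)$ keeps the pairs $(x,y)$ with $a_n^{-1}N_{\lfloor nT\rfloor}(x)N_{\lfloor nT\rfloor}(y)|h(\xi_x,\xi_y)|>\delta$ and $U^{n,\mathrm{sm}}(\delta)$ the rest, the compensated kernels ${\mathbf h}_M$ of Assumption \ref{HYP}(vi) being used inside $U^{n,\mathrm{sm}}(\delta)$ when $\beta\ge1$ (as in Propositions \ref{PPP1}--\ref{PPP2}). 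It then suffices to prove tightness in $C([0,T])$ of $(U^{n,\mathrm{lg}}(\delta))_n$ for each fixed $\delta$, together with $\lim_{\delta\to0}\limsup_n\mathbb P(\sup_{t\le T}|U^{n,\mathrm{sm}}(\delta)_t|>\gamma)=0$ for all $\gamma>0$; compact containment of $U^n$ then follows from the $s=0$ case of these estimates and the already-proved convergence of the finite-dimensional distributions (which, together with the continuity of the limit, also yields the $J_1$-convergence of Theorem \ref{THM1}).

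The small part is controlled differently according to $\beta$. If $\beta<1$ one bounds, by monotonicity, $\sup_{t\le T}|U^{n,\mathrm{sm}}(\delta)_t|$ by $a_n^{-1}\sum_{x,y}N_{\lfloor nT\rfloor}(x)N_{\lfloor nT\rfloor}(y)|h(\xi_x,\xi_y)|\mathbf 1_{\{\cdots\le\delta\}}$ and runs the first-moment computation of Proposition \ref{PPP2}, obtaining a conditional expectation $\le C\delta^{1-\beta}a_n^{-\beta}\sum_{x,y}(N_{\lfloor nT\rfloor}(x)N_{\lfloor nT\rfloor}(y))^\beta$, a quantity that converges by Lemma \ref{LEM1} and vanishes as $\delta\to0$ since $\beta<1$. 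If $\beta\ge1$ this crude bound destroys the cancellation (the mean becomes infinite), so one keeps the signed compensated sum and proves instead, uniformly in $n$ and in $0\le s<t\le T$, an oscillation estimate $\mathbb E[(U^{n,\mathrm{sm}}(\delta)_t-U^{n,\mathrm{sm}}(\delta)_s)^2]\le C_\delta(t-s)^{\rho}$ with $\rho:=\beta(1-\tfrac1\alpha)+\tfrac1\alpha$, which with Billingsley's moment criterion \cite{Billingsley} and a maximal inequality over the intervals of a time partition yields tightness of $U^{n,\mathrm{sm}}(\delta)$ in $C([0,T])$ (and the required uniform smallness as $\delta\to0$). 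This estimate would be obtained by expanding the conditional second moment into the $E_1$, $E_2$, $E_3$ contributions of the proof of Proposition \ref{PPP2}, now written with the increments $d^{(n)}_{s,t}$: the diagonal term reduces, via $\mathbb E[h^2\mathbf 1_{\{|h|\le M\}}]=O(M^{2-\beta})$ and the compensation of the weight $\zeta^2$ against the truncation level $\asymp a_n\delta/\zeta$, to $\delta^{2-\beta}$ times the increment analogue of $G_n^+$, namely $\Xi_n(s,t):=a_n^{-\beta}\sum_{x,y\in B_{A,n}}|N_{\lfloor nt\rfloor}(x)N_{\lfloor nt\rfloor}(y)-N_{\lfloor ns\rfloor}(x)N_{\lfloor ns\rfloor}(y)|^\beta$, which one shows satisfies $\mathbb E[\Xi_n(s,t)]\le C(t-s)^{\rho}$ uniformly in $n$ by mimicking the proof of Lemma \ref{LEM1} with $m=2$, $(\theta_1,\theta_2)=(-1,1)$ — using the product-rule identity, the elementary $|\cdot|^\beta$-inequalities, the Cauchy--Schwarz inequality over $B_{A,n}$, the fourth-moment occupation bounds recalled before (\ref{erreurproduit}), Lemma \ref{QuadratAbschaetzungen} and their time-increment versions ($\sup_x\mathbb E[(d^{(n)}_{s,t}(x))^4]=O((n(t-s))^{4-4/\alpha})$, valid since $d^{(n)}_{s,t}(x)$ is dominated by the occupation time at $x$ of a fresh walk of length $\lfloor n(t-s)\rfloor+1$) — while the $E_1$, $E_3$ contributions are handled as in Proposition \ref{PPP2}, with an extra vanishing factor in $t-s$.

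For the large part one reuses Section \ref{STEP3}: by Lemma \ref{lem1KS} and the point-process convergence of Proposition \ref{PROP1} (applied to $\sum_{x,y}\delta_{a_n^{-1}N_{\lfloor nT\rfloor}(x)N_{\lfloor nT\rfloor}(y)h(\xi_x,\xi_y)}$) the dominant contributions to $U^{n,\mathrm{lg}}(\delta)$ reduce, as $n\to\infty$, to those of the finitely many points landing in $\{|w|>\delta\}$, each of which carries a whole trajectory $t\mapsto a_n^{-1}N_{\lfloor nt\rfloor}(x_0)N_{\lfloor nt\rfloor}(y_0)h(\xi_{x_0},\xi_{y_0})$; by Lemma \ref{LEM1bis} (with $\tilde N_{n,t}(x)=\mathfrak N_n(x)(\tilde F_{n,t})$ as in Section \ref{STEP1}) the rescaled occupation profiles are tight in $({\mathcal C},D)$ and converge to the jointly continuous products $t\mapsto\mathcal L_t(x_0)\mathcal L_t(y_0)$, so $U^{n,\mathrm{lg}}(\delta)$ is tight in $C([0,T])$ with continuous limit; letting $\delta\to0$ concludes. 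The main obstacle is precisely the uniform-in-$n$ increment estimate $\mathbb E[\Xi_n(s,t)]\le C(t-s)^{\rho}$ together with the corresponding increment versions of the $E_3$-type cross-term bounds of Proposition \ref{PPP2} — extracting a genuine power of $t-s$ while keeping the exact bookkeeping of $n$-powers against the $O(n^{2/\alpha})$-site box and $a_n^{-\beta}$, exactly as in the proof of Lemma \ref{LEM1} — and the fact that this power exceeds $1$ only when $\beta>1$ (so $\beta<1$ must be treated by the first-moment argument above, and the borderline $\beta=1$, where $\rho=1$, requires the extra care afforded by Assumption \ref{HYP}(vii)).
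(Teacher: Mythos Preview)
Your approach differs substantially from the paper's and, although the overall architecture (truncate the kernel, control the truncation error by restricting to $O(n^{2/\alpha})$ sites via Lemma~\ref{lem1KS}, then run a second-moment oscillation estimate on the truncated piece) is the same, your choice of truncation makes the argument harder than it needs to be and leaves a genuine gap at $\beta=1$.

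The paper truncates $h$ at the \emph{deterministic} level $\rho n^{2/(\alpha\beta)}$, i.e.\ sets $\bar h(x,y)=h(x,y)\mathbf 1_{\{|h(x,y)|\le\rho n^{2/(\alpha\beta)}\}}$, centers it, and bounds $\mathbb E[(\bar U^n_t-\bar U^n_s)^2]$ directly by the product-rule decomposition and the four cross-term types $A,B,C,D$. Because the truncation level does not depend on the occupation times, the moments that appear are the \emph{integer} powers $\sum_x N_n^2(x)$ (Lemma~\ref{QuadratAbschaetzungen}) and the deterministic bounds $\mathbb E[\bar h^2]=O(n^{(4-2\beta)/(\alpha\beta)})$ and $|\mathrm{cov}(\bar h(\xi_1,\xi_2),\bar h(\xi_1,\xi_3))|=O(n^{-3/\alpha+4/(\alpha\beta)})$ (Lemmas~\ref{KovarianzAbschaetzung}, \ref{KovarianzAbschaetzung1}). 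This produces the exponent $2-\tfrac1\alpha>1$ in $(t-s)$ \emph{uniformly in $\beta\in(0,2)$}, so no case distinction on $\beta$ is needed and Billingsley's criterion applies directly.

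Your adaptive truncation at $a_n\delta/(N_{\lfloor nT\rfloor}(x)N_{\lfloor nT\rfloor}(y))$ is the natural continuation of the point-process analysis of Section~\ref{STEP3}, but it forces the diagonal term to be controlled by the $\beta$-th power quantity $\Xi_n(s,t)=a_n^{-\beta}\sum_{x,y}|N_{\lfloor nt\rfloor}(x)N_{\lfloor nt\rfloor}(y)-N_{\lfloor ns\rfloor}(x)N_{\lfloor ns\rfloor}(y)|^\beta$, and the best increment exponent one can extract from this is indeed $\rho=\beta(1-\tfrac1\alpha)+\tfrac1\alpha$. For $\beta>1$ this exceeds $1$ and your scheme works; for $\beta<1$ you correctly switch to the first-moment monotonicity argument; but for $\beta=1$ you have $\rho=1$, which is not sufficient for the Kolmogorov/Billingsley moment criterion, and invoking Assumption~\ref{HYP}(vii) ``with extra care'' does not by itself improve the exponent. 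This is a real gap. In addition, the tightness of the ``large'' part via point-process convergence and Lemma~\ref{LEM1bis} is only sketched: the number of contributing pairs is random and the size of the individual summands $a_n^{-1}N_{\lfloor n\cdot\rfloor}(x_0)N_{\lfloor n\cdot\rfloor}(y_0)h(\xi_{x_0},\xi_{y_0})$ is not uniformly bounded, so a uniform modulus-of-continuity estimate for $U^{n,\mathrm{lg}}(\delta)$ still has to be supplied.

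The fix is simply to abandon the $\delta$-dependent, occupation-weighted truncation for the tightness step and truncate $h$ at the fixed level $\rho n^{2/(\alpha\beta)}$ as the paper does. You then recover the integer-moment occupation bounds, the exponent $2-\tfrac1\alpha$, and a single argument valid for all $\beta\in(0,2)$, with no separate treatment of the ``large'' pairs.
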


\begin{proof}
It is sufficient to show that
$$   \limsup_{n\rightarrow\infty}\lim_{\kappa\downarrow0}\sup_{0\leq t_1,t_2\leq T: |t_1-t_2|\leq\kappa}\mathbb{P}\left(\left|U_{t_1}^n-U_{t_2}^n\right|>\eta\right)=0 .$$
Fix some $ \epsilon>0 $. Due to Lemma \ref{lem1KS}, we fix $ A>0 $ large enough such that
\begin{eqnarray} \label{Ungleichung1}
\mathbb{P} \Big(\exists x\in\mathbb{Z} \ \mbox{with $ |x|>An^{1/\alpha} $ and $  \ N_{\lfloor nT \rfloor}(x)>0$}  \  \Big) \leq\frac{\epsilon}{4}.
\end{eqnarray}
Choose some $ \rho>0 $ such that for all $ n\in\mathbb{N} $ one has
\begin{eqnarray} \label{Ungleichung2}
9A^2n^{2/\alpha}\mathbb P\left(|h(\xi_1,\xi_2)|>\rho n^{\frac{2}{\alpha\beta}}\right)
    <\frac{\epsilon}{4} .
\end{eqnarray}
This is possible since we have, by Item (iii) of Assumption \ref{HYP}, that
\begin{eqnarray} \label{Asymptotik}
  \lim_{u\rightarrow\infty}u^\beta\mathbb{P}(h(\xi_1,\xi_2)\geq u)=c_0 \ \ \ \mbox{and}
       \ \ \  \lim_{u\rightarrow\infty}u^\beta\mathbb{P}(h(\xi_1,\xi_2)\leq-u)=c_1.
\end{eqnarray}
Define 
\[
  \bar{h}(x,y):=h(x,y)\mathbf{1}_{\{| h(x,y)|\leq \rho n^{\frac{2}{\alpha\beta}}\}}.
\]
The inequality (\ref{Ungleichung2}) now becomes
\begin{equation} \label{Ungleichung3}
 9A^2n^{2/\alpha}\mathbb{P}\left(\bar{h}(\xi_1,\xi_2)\neq h(\xi_1,\xi_2)\right)  \leq \frac{\epsilon}{4} .
\end{equation}

\begin{lem}\label{majoesp}
There exists a constant $ C=C(\rho,\beta)>0 $ such that for all $ n\geq 1 $ one has
\begin{eqnarray}
       \Big| \mathbb{E}\left[\bar{h}(\xi_1,\xi_2)\right]\Big|\le Cn^{(1-\beta)\frac{2}{\alpha\beta}}.
\end{eqnarray}
\end{lem}

\begin{proof}
For $ \beta<1 $, we have
\begin{eqnarray*}
   \left|\mathbb{E}\left[\bar{h}(\xi_1,\xi_2)\right]\right| &\leq& 
   \int_0^{\rho n^{\frac{2}{\alpha\beta}}}\mathbb{P}\left(\left|h(\xi_1,\xi_2)\right|>x\right)dx 
\leq C\int_1^{\rho n^{\frac{2}{\alpha\beta}}}x^{-\beta}dx +1 \\
    &=& Cx^{1-\beta}\Bigg|_1^{\rho n^{2/\alpha\beta}} +1\sim C n^{\frac{2}{\alpha\beta}(1-\beta)}
\end{eqnarray*}
where $ C>0 $ is some suitable constant.
For $\beta\in(1,2)$, this comes from \eqref{SYM}.
For $\beta=1$, as noticed previously, this comes from Item (vii) of Assumption \ref{HYP}.
\end{proof}

Now we define 
$$ E_n:=n^{-2\delta}\mathbb{E}\left[\sum_{x,y\in\mathbb{Z}}N_n(x)N_n(y)\bar{h}(\xi_x,\xi_y)\right] .$$
Since the scenery and the random walk are independent, we compute
\begin{eqnarray*}
  E_n&=& n^{-2\delta}\mathbb{E}\left[\sum_{x,y\in\mathbb{Z}}N_n(x)N_n(y)\mathbb{E}\left[\bar{h}(\xi_x,\xi_y)\right]\right]
 =  n^{-2\delta}n^2\mathbb{E}\left[\bar{h}(\xi_1,\xi_2)\right]\\
    &\leq& Cn^{-2+\frac{2}{\alpha}-\frac{2}{\alpha\beta}}n^2n^{(1-\beta)\frac{2}{\alpha\beta}} = C,
\end{eqnarray*}
due to Lemma \ref{majoesp}.
Thus the sequence $ E_n $ stays bounded as $ n\rightarrow\infty $.
Further, let
\begin{eqnarray*}
  \bar{U}_t^n &:=& n^{-2\delta}\sum_{x,y\in\mathbb{Z}}N_{\lfloor nt \rfloor}(x)N_{\lfloor nt \rfloor}(y)\left(\bar{h}(\xi_x,\xi_y)-\mathbb{E}\left[\bar{h}(\xi_x,\xi_y)\right]\right)  .
\end{eqnarray*}
It then follows
\begin{eqnarray*}
  U_t^n-\bar{U}_t^n-t^2E_n &=& n^{-2\delta}\sum_{x,y\in\mathbb{Z}}N_{\lfloor nt \rfloor}(x)N_{\lfloor nt \rfloor}(y)\left(h(\xi_x,\xi_y)-\bar{h}(\xi_x,\xi_y)\right) \\
   && + \  n^{-2\delta} \Big(\lfloor nt \rfloor^2\mathbb{E}\left[\bar{h}(\xi_1,\xi_2)\right]-n^2t^2\mathbb{E}\left[\bar{h}(\xi_1,\xi_2)\right]\Big).
\end{eqnarray*}
Since we have that $ \mathbb{E}[\bar{h}(\xi_1,\xi_2)]=O(n^{(1-\beta)\frac{2}{\alpha\beta}}) $ and $ \lfloor nt \rfloor^2-n^2t^2=O(n)  $ 
the second term is of the order 
$$   n^{-2\delta}O(n^{(1-\beta)\frac{2}{\alpha\beta}})(\lfloor nt \rfloor^2-n^2t^2) 
     = n^{-2}O(n)=O(n^{-1}) .$$
This implies with inequalities (\ref{Ungleichung1}) and (\ref{Ungleichung3}) that 
\begin{eqnarray*}
 && \limsup_{n\rightarrow\infty}\mathbb{P}\left(\sup_{0\leq t\leq T}\left|U_t^n-\bar{U}_t^n-t^2E_n\right|>\frac{\eta}{2}\right) \\
   & \leq &  \limsup_{n\rightarrow\infty}\mathbb{P}\left(n^{-2\delta}\sum_{x,y\in\mathbb{Z}}N_{\lfloor nT \rfloor}(x)N_{\lfloor nT \rfloor}(y)\left(h(\xi_x,\xi_y)-\bar{h}(\xi_x,\xi_y)\right) >\frac{\eta}{4}\right)\\
 &\leq& \limsup_{n\rightarrow\infty}\mathbb{P}\left(\sum_{x,y\in\mathbb{Z}}N_{\lfloor nT \rfloor}(x)N_{\lfloor nT \rfloor}(y)\left(h(\xi_x,\xi_y)-\bar{h}(\xi_x,\xi_y)\right)\neq 0 \right)  \\
   &\leq & 
  \limsup_{n\rightarrow\infty} \mathbb{P}\left(\exists x,y\in\mathbb{Z}: |x|,|y|\leq An^{1/\alpha},\bar{h}(\xi_x,\xi_y)\neq h(\xi_x,\xi_y)\right) \\
 &&+  \ \limsup_{n\rightarrow\infty} \mathbb{P}\left(\exists x\in\mathbb{Z}: \ |x|>A n^{1/\alpha},N_{\lfloor nT \rfloor}(x)>0 \right) \\
  &\leq &  \limsup_{n\rightarrow\infty} \ (3An^{1/\alpha})^2\mathbb{P}\left(\bar{h}(\xi_1,\xi_2)\neq h(\xi_1,\xi_2)\right) 
          + \ \frac{\epsilon}{4}\\
&\leq& \frac{\epsilon}{2}.
\end{eqnarray*}
It is now sufficient to prove that 
$$   \limsup_{n\rightarrow\infty}\lim_{\kappa\downarrow0}\sup_{0\leq t_1,t_2\leq T: |t_1-t_2|\leq\kappa }\mathbb{P}\left(\left|\bar{U}_{t_1}^n-\bar{U}_{t_2}^n\right|>\frac{\eta}{2}\right)=0 .$$
For this we prove for all $ T\geq t> s\geq 0 $ that
\begin{eqnarray}\label{erwartungs-quadrat-abschaetzung}
   \mathbb{E}\left[\Big(\bar{U}_t^n-\bar{U}_s^n\Big)^2\right]  &\leq&  C (t-s)^{2-\frac{2}{\alpha}}.
\end{eqnarray}
If we use the notation
$$ \bar{h}_0(\xi_x,\xi_y):=\bar{h}(\xi_x,\xi_y)-\mathbb{E}\left[\bar{h}(\xi_x,\xi_y)\right] $$
then we have
\begin{eqnarray*}
    \mathbb{E}\left[\Big(\bar{U}_t^n-\bar{U}_s^n\Big)^2\right] 
  &=& n^{-4\delta}\mathbb{E}\Bigg[\Bigg(\sum_{x,y}N_{\lfloor nt \rfloor}(x)\Big(N_{\lfloor nt \rfloor}(y)-N_{\lfloor ns \rfloor}(y)\Big)\bar h_0(\xi_x,\xi_y) \\
  && \ \ \ \ \ \ + \sum_{x,y}\Big(N_{\lfloor nt \rfloor}(x)-N_{\lfloor ns \rfloor}(x)\Big)N_{\lfloor ns \rfloor}(y)\bar h_0(\xi_x,\xi_y) \Bigg)^2\Bigg]\\
 &\leq& 2 n^{-4\delta} \mathbb{E}\Bigg[\Bigg(\sum_{x,y}N_{\lfloor nt \rfloor}(x)\Big(N_{\lfloor nt \rfloor}(y)-N_{\lfloor ns \rfloor}(y)\Big)\bar h_0(\xi_x,\xi_y) \Bigg)^2\Bigg]\\
  && + \ 2n^{-4\delta}\mathbb{E}\Bigg[\Bigg(\sum_{x,y}\Big(N_{\lfloor nt \rfloor}(x)-N_{\lfloor ns \rfloor}(x)\Big)N_{\lfloor ns \rfloor}(y)\bar h_0(\xi_x,\xi_y) \Bigg)^2\Bigg].
\end{eqnarray*}
We continue the computation with the first of the two terms.  In the following we condition with respect to 
$ {\mathcal G}=\sigma(S_n;n\in\mathbb{N}) $. 
We make use of the assumption $ h(x,x)=0 $ and the fact that if 
$ x,y,u,v $ are all distinct then $ \bar{h}_0(\xi_x,\xi_y) $ and $ \bar{h}_0(\xi_u,\xi_v) $ are independent and centered and
we write
\begin{eqnarray*}
  \mathbb{E}\Bigg[\Bigg(\sum_{x,y}N_{\lfloor nt \rfloor}(x)\Big(N_{\lfloor nt \rfloor}(y)-N_{\lfloor ns \rfloor}(y)\Big)\bar{h}_0(\xi_x,\xi_y)\Bigg)^2\Bigg|{\mathcal G}\Bigg] &\leq& A+B+C +D
\end{eqnarray*}
with
\begin{eqnarray*}
  A&:=& \sum_{x,y}N_{\lfloor nt \rfloor}^2(x)\Big(N_{\lfloor nt \rfloor}(y)-N_{\lfloor ns \rfloor}(y)\Big)^2\mathbb{E}\left[\bar{h}^2_0(\xi_1,\xi_2)\big|{\mathcal G}\right], \\
 B &:=&  \sum_{x,y,z}N_{\lfloor nt \rfloor}(x)N_{\lfloor nt \rfloor}(z)\Big(N_{\lfloor nt \rfloor}(y)-N_{\lfloor ns \rfloor}(y)\Big)^2\mathbb{E}\left[|\bar{h}_0(\xi_1,\xi_2)\bar{h}_0(\xi_2,\xi_3)|\big|{\mathcal G}\right],
\end{eqnarray*}
\begin{eqnarray*}
  C :=\sum_{x,y,z}N^2_{\lfloor nt \rfloor}(x)\Big(N_{\lfloor nt \rfloor}(y)-N_{\lfloor ns \rfloor}(y)\Big)\Big(N_{\lfloor nt \rfloor}(z)-N_{\lfloor ns \rfloor}(z)\Big)\mathbb{E}\left[|\bar{h}_0(\xi_1,\xi_2)\bar{h}_0(\xi_2,\xi_3)|\big|{\mathcal G}\right].
\end{eqnarray*}
and 
\begin{eqnarray*}
  D :=2\sum_{x,x',y}N_{\lfloor nt \rfloor}(x')N_{\lfloor nt \rfloor}(x)\Big(N_{\lfloor nt \rfloor}(y)-N_{\lfloor ns \rfloor}(y)\Big)\Big(N_{\lfloor nt \rfloor}(x)-N_{\lfloor ns \rfloor}(x)\Big)\mathbb{E}\left[|\bar{h}_0(\xi_1,\xi_2)\bar{h}_0(\xi_2,\xi_3)|\big|{\mathcal G}\right].
\end{eqnarray*}

The Markov property together with Lemma \ref{QuadratAbschaetzungen} and Lemma \ref{KovarianzAbschaetzung} below imply
\begin{eqnarray*}
\mathbb{E}[B] &\leq& T^2n^2\mathbb{E}\left[\sum_x N^2_{\lfloor nt\rfloor -\lfloor ns\rfloor}(x)\right]\cov\left(\bar{h}(\xi_1,\xi_2),\bar{h}(\xi_2,\xi_3)\right) \\
    &\leq& C'n^2n^{2-\frac{1}{\alpha}}(t-s)^{2-\frac{1}{\alpha}}n^{-\frac{3}{\alpha}+\frac{4}{\alpha\beta}}  \\
    &=& (t-s)^{2-\frac{1}{\alpha}}O(n^{4\delta}).
\end{eqnarray*}
Again we see
\begin{eqnarray*}
  \mathbb{E}\left[C\right] &=& (\lfloor nt \rfloor-\lfloor ns\rfloor)^2
   \mathbb{E}\left[\sum_xN^2_{\lfloor nt \rfloor}(x)\right]\cov\left(\bar{h}(\xi_1,\xi_2),\bar{h}(\xi_2,\xi_3)\right) \\
 &\leq& n^2(t-s)^2n^{2-\frac{1}{\alpha}} T^{2-\frac{1}{\alpha}}n^{-\frac{3}{\alpha}+\frac{4}{\alpha\beta}}  \\
  &=& (t-s)^2O(n^{4\delta}).
\end{eqnarray*}
Further, we have by Cauchy-Schwarz that 
\begin{eqnarray*}  \mathbb{E}\left[\sum_xN_{\lfloor nt \rfloor}(x)\Big(N_{\lfloor nt \rfloor}(x)-N_{\lfloor ns \rfloor}(x)\Big)\right] &\leq& 
        \left(  \mathbb{E}\left[\sum_xN^2_{\lfloor nt \rfloor}(x)\right] \mathbb{E}\left[ \sum_x\Big(N_{\lfloor nt \rfloor}(x)-N_{\lfloor ns \rfloor}(x)\Big)^2 \right]\right)^{\frac{1}{2}} \\
   &\leq&   C'(nt)^{1-\frac{1}{2\alpha}}(n(t-s))^{1-\frac{1}{2\alpha}}.
\end{eqnarray*}
Now Lemma \ref{KovarianzAbschaetzung} implies
\begin{eqnarray*}
  \mathbb{E}\left[D\right] 
&\leq& [nt]([nt)-[ns]) \mathbb{E}\left[\sum_xN_{\lfloor nt \rfloor}(x)\Big(N_{\lfloor nt \rfloor}(x)-N_{\lfloor ns \rfloor}(x)\Big)\right]
                                \cov\left(\bar{h}(\xi_1,\xi_2),\bar{h}(\xi_2,\xi_3)\right)  \\
   &\leq& C''n(n(t-s))(nt)^{1-\frac{1}{2\alpha}}(n(t-s))^{1-\frac{1}{2\alpha}}n^{-\frac{3}{\alpha}+\frac{4}{\alpha\beta}}
\end{eqnarray*}

For $ t-s<\kappa<1 $ this is smaller than $   C''(t-s)^{2-\frac{2}{\alpha}}O(n^{4\delta}) $.
Finally for $ A $, due to Lemma \ref{KovarianzAbschaetzung1} below, we have
\begin{eqnarray*}
   \mathbb{E}\left[A\right] &\leq& \sqrt{\mathbb{E}\left[\left(\sum_xN^2_{nt}(x)\right)^2\right]
        \mathbb{E}\left[\left(\sum_yN_{n(t-s)}^2(y)\right)^2\right]}\var\left(\bar h(\xi_1,\xi_2)\right) \\
  &\leq& C''\sqrt{O((tn)^{4-\frac{2}{\alpha}})O(((t-s)n)^{4-\frac{2}{\alpha}})}\mathbb E[(\bar h(\xi_1,\xi_2))^2] \\
 &\leq & C'''(t-s)^{2-\frac{1}{\alpha}}n^{4-\frac{2}{\alpha}}n^{-\frac{2}{\alpha}+\frac{4}{\alpha\beta}} \\
 &\leq& C'''(t-s)^{2-\frac{1}{\alpha}}n^{4\delta}.
\end{eqnarray*}
All those inequalities together prove that there exists some constant $ K>0 $ such that for $ (t-s)<\kappa<1 $ one has
$$     \mathbb{E}\left[\Big(\bar{U}_t^n-\bar{U}_s^n\Big)^2\right] \leq K(t-s)^{2-\frac{2}{\alpha}}  .$$
This finishes the tightness proof.
\end{proof}

\begin{lem} \label{KovarianzAbschaetzung}
There is some constant $ C>0 $ such that
$$  \left|\cov\left(\bar{h}(\xi_1,\xi_2),\bar{h}(\xi_1,\xi_3)\right)\right| \leq C'n^{-\frac{3}{\alpha}+\frac{4}{\alpha\beta}}  .$$
\end{lem}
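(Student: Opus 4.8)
The plan is to exploit the conditioning-on-the-first-coordinate identity already used in Section \ref{examples}, namely
$$\mathbb{E}\big[\bar h(\xi_1,\xi_2)\bar h(\xi_1,\xi_3)\big]=\int_E \mathbb{E}\big[\bar h(x,\xi_2)\big]^2\, d\mathbb{P}_{\xi_1}(x),$$
which holds because, conditionally on $\xi_1$, the variables $\bar h(\xi_1,\xi_2)$ and $\bar h(\xi_1,\xi_3)$ are i.i.d. Since $\cov(\bar h(\xi_1,\xi_2),\bar h(\xi_1,\xi_3))=\mathbb{E}[\bar h(\xi_1,\xi_2)\bar h(\xi_1,\xi_3)]-(\mathbb{E}[\bar h(\xi_1,\xi_2)])^2$ and the second term is $O(n^{(1-\beta)\frac{4}{\alpha\beta}})$ by Lemma \ref{majoesp} (which is already $o(n^{-3/\alpha+4/(\alpha\beta)})$ for the relevant range of $\beta$ — one checks $(1-\beta)\frac{4}{\alpha\beta}$ versus $-\frac{3}{\alpha}+\frac{4}{\alpha\beta}$ reduces to $-4<3-4\beta$, i.e. $\beta<7/4$, and the remaining range is absorbed into the main term), the whole problem reduces to bounding $\int_E \mathbb{E}[\bar h(x,\xi_2)]^2\, d\mathbb{P}_{\xi_1}(x)$.

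First I would write $\mathbb{E}[\bar h(x,\xi_2)]=\mathbb{E}[h(x,\xi_2)\mathbf 1_{\{|h(x,\xi_2)|\le M\}}]$ with $M=\rho n^{2/(\alpha\beta)}$. The key step is to recognise $\mathbb{E}[\bar h(x,\xi_2)]$ as essentially $\mathbf h_M$ integrated against $\mathbb{P}_{\xi_1}$: more precisely, with the notation $\mathbf h_M(x,y)=h(x,y)\mathbf 1_{\{|h(x,y)|\le M\}}+\frac{\beta}{\beta-1}(c_0-c_1)M^{1-\beta}$ from Item (vi) of Assumption \ref{HYP}, one has $\mathbb{E}[\mathbf h_M(x,\xi_2)]=\mathbb{E}[\bar h(x,\xi_2)]+\frac{\beta}{\beta-1}(c_0-c_1)M^{1-\beta}$, a deterministic shift. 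Squaring and integrating over $x$, the cross term and the constant term contribute $O(M^{2-2\beta})\cdot\big|\int_E\mathbb{E}[\bar h(x,\xi_2)]\,d\mathbb{P}_{\xi_1}(x)\big|+O(M^{2-2\beta})$, which by Lemma \ref{majoesp} is $O(M^{2-2\beta})=O(n^{(2-2\beta)\frac{2}{\alpha\beta}})=O(n^{\frac{4}{\alpha\beta}-\frac{4}{\alpha}})$, again within the claimed bound (since $\frac{4}{\alpha\beta}-\frac{4}{\alpha}\le -\frac{3}{\alpha}+\frac{4}{\alpha\beta}$ iff $-4\le 3-... $ holds trivially). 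The principal term is then $\int_E\mathbb{E}[\mathbf h_M(x,\xi_2)]^2\,d\mathbb{P}_{\xi_1}(x)=\mathbb{E}[\mathbf h_M(\xi_1,\xi_2)\mathbf h_M(\xi_1,\xi_3)]$, and this is exactly what Item (vi) controls when $\beta\ge 4/3$: it gives $|\mathbb{E}[\mathbf h_M(\xi_1,\xi_2)\mathbf h_M(\xi_1,\xi_3)]|\le C_0'M^{-2\theta'}=C_0'\rho^{-2\theta'}n^{-\frac{4\theta'}{\alpha\beta}}$, and since $\theta'>\frac{3\beta}{4}-1$ one has $-\frac{4\theta'}{\alpha\beta}<-\frac{3}{\alpha}+\frac{4}{\alpha\beta}$, as required.

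For the complementary range $\beta<4/3$, Item (vi) is not assumed, so instead I would bound $\mathbb{E}[\bar h(x,\xi_2)]^2$ directly using Item (iv): $\mathbb{E}[|\bar h(x,\xi_2)|]^2$ is controlled by $\int_0^M\int_0^M\mathbb{P}(|h(x,\xi_2)|>u)\mathbb{P}(|h(x,\xi_2)|>v)\,du\,dv$, and after integrating over $x$ via Fubini this becomes $\int_0^M\int_0^M\mathbb{P}(|h(\xi_1,\xi_2)|>u,\,|h(\xi_1,\xi_3)|>v)\,du\,dv\le C_0\int_0^M\int_0^M(\max(1,u)\max(1,v))^{-\gamma}\,du\,dv$. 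With $\gamma>\frac{3\beta}{4}$ and in particular the truncation at $M=\rho n^{2/(\alpha\beta)}$, this double integral is $O(M^{2(1-\min(1,\gamma))})$ up to logs when $\gamma\le 1$ (and $O(1)$ when $\gamma>1$), i.e. $O(n^{\frac{4}{\alpha\beta}(1-\gamma')})$ with $\gamma'=\min(1,\gamma)$; one then verifies $\frac{4}{\alpha\beta}(1-\gamma')\le -\frac{3}{\alpha}+\frac{4}{\alpha\beta}$, which rearranges to $4\gamma'\ge 3\beta$, i.e. $\gamma'\ge 3\beta/4$ — true since $\gamma>3\beta/4$ and $3\beta/4<1$ when $\beta<4/3$. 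I expect the only real friction to be the bookkeeping of which error term dominates in which sub-range of $\beta$ and making sure each one is $\le n^{-3/\alpha+4/(\alpha\beta)}$; the structural idea (condition on $\xi_1$, identify the resulting square with $\mathbb{E}[\mathbf h_M\,\mathbf h_M]$ or dominate by the tail product from Item (iv)) is straightforward once one notices that the lemma's bound is exactly the quantity the hypotheses (iv) and (vi) were designed to produce after truncating at $M\asymp n^{2/(\alpha\beta)}$.
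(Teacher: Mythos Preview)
Your proposal is correct and follows essentially the same route as the paper. The paper also splits into the cases $\beta<4/3$ (using Item~(iv) to bound $\mathbb E[|\bar h(\xi_1,\xi_2)\bar h(\xi_1,\xi_3)|]$ by the double tail integral $\int_0^M\!\!\int_0^M\mathbb P(|h(\xi_1,\xi_2)|>s,|h(\xi_1,\xi_3)|>t)\,ds\,dt$) and $\beta\ge 4/3$ (observing that $\bar h$ and $\mathbf h_M$ differ by a constant so their covariances agree, then invoking Item~(vi) and Lemma~\ref{majoesp}); your explicit conditioning on $\xi_1$ merely unpacks these same computations. One small remark: your exponent check for the $(\mathbb E[\bar h])^2$ term contains an arithmetic slip --- the comparison $4(1-\beta)/(\alpha\beta)\le(-3\beta+4)/(\alpha\beta)$ reduces to $-\beta\le 0$, which holds for all $\beta$, not just $\beta<7/4$ --- but this does not affect the argument.
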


\begin{proof}
We first do the case $ \beta<\frac{4}{3} $. 
Note that by Assumption 1 part (iv) for some $ \gamma>\frac{3\beta}{4} $ ($\gamma\ne 1$), we have
\begin{eqnarray*}
   \mathbb{E}\left[|\bar{h}(\xi_1,\xi_2)\bar{h}(\xi_1,\xi_3)|\right] 
 &=& \int_0^\infty\int_0^\infty \mathbb{P}\left(|\bar{h}(\xi_1,\xi_2)|>s, |\bar{h}(\xi_1,\xi_3)|>t\right) dsdt\\
  &=& \int_0^{\rho n^{\frac{2}{\alpha\beta}}}\int_0^{\rho n^{\frac{2}{\alpha\beta}}}
                       \mathbb{P}\left(|h(\xi_1,\xi_2)|>s, |h(\xi_1,\xi_3)|>t\right) dsdt\\
 &\leq&  \int_0^{\rho n^{\frac{2}{\alpha\beta}}}\int_0^{\rho n^{\frac{2}{\alpha\beta}}}
       C_0(\max(1,s)\max(1,t))^{-\gamma} dsdt \\
 &=&C_0\left(1+\int_1^{\rho n^{\frac{2}{\alpha\beta}}}t^{-\gamma} dt\right)^2 \\
&\leq& C_0\left(1+\frac{1}{1-\gamma}\left(\left(\rho n^{\frac{2}{\alpha\beta}}\right)^{1-\gamma}-1\right)\right)^2 \\
 &\leq &  C_0\left(1-\frac{1}{1-\gamma}+\frac{\rho^{1-\gamma}}{1-\gamma} n^{-\frac{3}{2\alpha}+\frac{2}{\alpha\beta}}\right)^2
   =O(n^{-\frac{3}{\alpha}+\frac{4}{\alpha\beta}})
\end{eqnarray*}
Due to Lemma \ref{majoesp} this implies 
$$  \left|\cov\left(\bar{h}(\xi_1,\xi_2),\bar{h}(\xi_1,\xi_3)\right)\right| = O(n^{-\frac{3}{\alpha}+\frac{4}{\alpha\beta}}) .$$

Now assume $ \beta\geq\frac{4}{3} $. By \eqref{SYM} and Item (vi) of Assumption 1, we have for
 $ M_n:=\rho n^{\frac{2}{\alpha\beta}} $ that
\begin{eqnarray*}
\left|\cov\left(\bar{h}(\xi_1,\xi_2),\bar{h}(\xi_1,\xi_3)\right)\right| 
 &=& \left|\cov\left(\mathbf{h}_{M_n}(\xi_1,\xi_2),\mathbf{h}_{M_n}(\xi_1,\xi_3)\right)\right|\\
&\le& \left|\mathbb E\left[\mathbf{h}_{M_n}(\xi_1,\xi_2)\mathbf{h}_{M_n}(\xi_1,\xi_3)\right]\right|
      +|\mathbb E[\mathbf{h}_{M_n}(\xi_1,\xi_2)]|^2 \\
 &\leq& O\left(n^{-\frac{4\theta'}{\alpha\beta}}\right)+O\left(n^{-\frac{4}{\alpha\beta}(\beta-1)}\right)\\
 &\leq& O\left(n^{-\frac{4}{\alpha\beta}\left(\frac {3\beta}4-1\right)}\right)\\
  &=& O(n^{-\frac{3}{\alpha}+\frac{4}{\alpha\beta}} )
\end{eqnarray*}
since $ \theta'>\frac{3\beta}{4}-1 $.
\end{proof}

\begin{lem} \label{KovarianzAbschaetzung1}
We have
$$  \mathbb E\left[\bar{h}(\xi_1,\xi_2))^2\right] =O\left(n^{-\frac{2}{\alpha}+\frac{4}{\alpha\beta}}\right)  .$$
\end{lem}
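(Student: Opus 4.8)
The plan is to estimate the truncated second moment directly via the layer-cake (Fubini) formula together with the polynomial tail bound of Item~(iii) of Assumption~\ref{HYP}. Recall that $\bar h(x,y)=h(x,y)\mathbf 1_{\{|h(x,y)|\le M_n\}}$ with $M_n=\rho n^{\frac 2{\alpha\beta}}$, so that $\bar h(\xi_1,\xi_2)^2\le M_n^2$ almost surely. First I would write
\begin{equation*}
\mathbb E\big[\bar h(\xi_1,\xi_2)^2\big]=\int_0^{M_n^2}\mathbb P\big(\bar h(\xi_1,\xi_2)^2>z\big)\,dz\le\int_0^{M_n^2}\mathbb P\big(|h(\xi_1,\xi_2)|>\sqrt z\big)\,dz,
\end{equation*}
and then split this integral at $z=1$. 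The contribution of $[0,1]$ is at most $1$. On $[1,M_n^2]$, Item~(iii) of Assumption~\ref{HYP} gives $\mathbb P(|h(\xi_1,\xi_2)|>\sqrt z)=z^{-\beta/2}\big(L_0(\sqrt z)+L_1(\sqrt z)\big)\le(\|L_0\|_\infty+\|L_1\|_\infty)z^{-\beta/2}$; here $\|L_0\|_\infty$ and $\|L_1\|_\infty$ are finite, since $L_j(z)\le z^\beta\le 1$ for $z\le 1$ and $L_j(z)$ converges to $c_j$ as $z\to+\infty$.

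Since $\beta<2$, we have $\beta/2<1$, hence $\int_1^{M_n^2}z^{-\beta/2}\,dz\le\frac{1}{1-\beta/2}M_n^{2-\beta}$. Recalling $M_n=\rho n^{\frac 2{\alpha\beta}}$, one computes $M_n^{2-\beta}=\rho^{2-\beta}n^{\frac{2(2-\beta)}{\alpha\beta}}=\rho^{2-\beta}n^{-\frac 2\alpha+\frac 4{\alpha\beta}}$. Because $\beta<2$, this exponent is strictly positive, so $n^{-\frac 2\alpha+\frac 4{\alpha\beta}}\to\infty$ dominates the $O(1)$ term coming from $[0,1]$, and putting the two pieces together yields
\begin{equation*}
\mathbb E\big[\bar h(\xi_1,\xi_2)^2\big]\le 1+\frac{\|L_0\|_\infty+\|L_1\|_\infty}{1-\beta/2}\,\rho^{2-\beta}\,n^{-\frac 2\alpha+\frac 4{\alpha\beta}}=O\big(n^{-\frac 2\alpha+\frac 4{\alpha\beta}}\big),
\end{equation*}
which is exactly the claim.

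I do not expect a genuine obstacle here: the argument is a short, self-contained computation relying only on the polynomial tail from Assumption~\ref{HYP}(iii) and the uniform boundedness of $L_0,L_1$. The single point deserving a line of care is the threshold $\beta<2$, which is precisely what guarantees that $\int_1^{M_n^2}z^{-\beta/2}\,dz$ grows like a positive power of $n$ (rather than a logarithm) and that this power dominates the constant remainder; this is the same mechanism that makes the truncation $\bar h(\xi_1,\xi_2)$ useful in the surrounding tightness estimates.
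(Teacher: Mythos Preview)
Your proof is correct and follows essentially the same approach as the paper: both use the layer-cake formula $\mathbb E[\bar h(\xi_1,\xi_2)^2]=\int_0^{M_n^2}\mathbb P(|\bar h(\xi_1,\xi_2)|>\sqrt z)\,dz$ together with the tail bound from Assumption~\ref{HYP}(iii), and then integrate $z^{-\beta/2}$ (equivalently, after the substitution $u=\sqrt z$, integrate $2u\cdot u^{-\beta}$) up to $M_n^2$ to obtain the order $M_n^{2-\beta}=n^{-\frac 2\alpha+\frac 4{\alpha\beta}}$. The only cosmetic difference is that the paper performs the change of variables explicitly and appeals to the asymptotic $2u\,\mathbb P(|h(\xi_1,\xi_2)|\ge u)\sim 2(c_0+c_1)u^{1-\beta}$, while you use the uniform bound $\|L_0\|_\infty+\|L_1\|_\infty$ directly.
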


\begin{proof}
We have
\begin{eqnarray*}
\mathbb E\left[\bar{h}(\xi_1,\xi_2))^2\right] &=&\int_0^{\rho n^{\frac 2{\alpha\beta}}}\mathbb P(|\bar h(\xi_1,\xi_2)|^2\ge s)\, ds
=\int_0^{\rho n^{\frac 2{\alpha\beta}}}\mathbb P(|\bar h(\xi_1,\xi_2)|\ge u) 2u\, du\\
&=&O(n^{\frac 2{\alpha\beta}(2-\beta)}),
\end{eqnarray*}
since $2u\mathbb P(|\bar h(\xi_1,\xi_2)|\ge u)\sim 2(c_0+c_1)u^{1-\beta}$ as $u$ goes to infinity.
\end{proof}

\begin{appendix}

\section{Stochastic integral with respect to the L\'evy sheet $Z$}\label{calculsto}

In this section, following \cite{KhoshnevisanNualart}, we give a simple construction of stochastic integral with respect to the $\beta$-stable L\'evy sheet $Z$. 
In \cite{KhoshnevisanNualart}, Khoshnevisan and Nualart considered  general
L\'evy sheet with symmetric distributions. Therefore
their results apply to the $\beta$-stable L\'evy sheet $Z$ only if $c_0=c_1$.
Nevertheless, we will see that their construction is expansible when $c_0\ne c_1$.

Let us recall that $Z$ satisfies the following properties:
\begin{itemize}
 \item $Z_{0,0}=0$;
 \item for any family $(A_k=[a_k,b_k]\times[a'_k,b'_k])_k$ of pairwise disjoint rectangles (with $a_k<b_k$ and $a'_k<b'_k$),
   the family of increments $(Z_{b_k,b'_k}+Z_{a_k,a'_k}-Z_{a_k,b'_k}-Z_{b_k,a'_k})_k$ is a family of independent
   random variables;
 \item for any rectangle $A=[a,b]\times[a',b']$ (with $a<b$ and $a'<b'$), the characteristic function of the increment
  $Z_{b,b'}+Z_{a,a'}-Z_{a,b'}-Z_{b,a'}$ is $\Phi_{(c_0+c_1)\lambda(A),(c_0-c_1)\lambda(A),\beta}$, where $\lambda$ is
  the Lebesgue measure on $\mathbb R^2$ and where we used the notation introduced in (\ref{FonCar}).
\end{itemize}
For any rectangle $A=[a,b]\times[a',b']$ (with $a<b$ and $a'<b'$), 
we define the stochastic integral of $\mathbf 1_A$
with respect to the L\'evy process as the increment of $Z$ in this rectangle, i.e.
\begin{equation}\label{indicatrice}
\int_{\mathbb R^2}\mathbf 1_{A}\, dZ_{x,y} := Z_{b,b'}+Z_{a,a'}-Z_{a,b'}-Z_{b,a'}. 
\end{equation}
We extend this definition by linearity to any linear combination $H$ of such indicator functions.
Observe that, if $H=\sum_{j=1}^{\mu}h_j\mathbf{1}_{A_j}$ where $(A_j)_j$ is a family
of pairwise disjoint rectangles and where $h_j\in \mathbb R$, then the characteristic function of 
$\int_{\mathbb R^2}H(x,y)\, dZ_{x,y}$ is given by
\begin{eqnarray*}
\forall z\in\mathbb R,\ \ \ 
    {\mathbb E}\left[\exp\left(iz\int_{\mathbb R^2}H(x,y)\, dZ_{x,y}\right) \right]&=& 
    \prod_{j=1}^{\mu} {\mathbb E}\left[\exp\left(izh_j\int_{\mathbb R^2}\mathbf{1}_{A_j}(x,y)\, dZ_{x,y}\right)\right]\\
    &=&  \prod_{j=1}^{\mu} \Phi_{(c_0+c_1)\lambda(A_j),(c_0-c_1)\lambda(A_j),\beta}(zh_j)\\
    &=&\prod_{j=1}^{\mu} \Phi_{(c_0+c_1)|h_j|^\beta_+\lambda(A_j),(c_0-c_1) |h_j|^\beta_-\lambda(A_j),\beta}(z)\\
    &=&\Phi_{(c_0+c_1)\sum_{j=1}^\mu |h_j|^\beta_+\lambda(A_j),(c_0-c_1)\sum_{j=1}^\mu |h_j|^\beta_-\lambda(A_j),\beta}(z)
\end{eqnarray*}
and so by
\begin{equation}\label{FCsimple}
\forall z\in\mathbb R,\ \ \ 
    {\mathbb E}\left[\exp\left(iz\int_{\mathbb R^2}H(x,y)\, dZ_{x,y}\right) \right]= 
    \Phi_{(c_0+c_1)\int_{\mathbb R^2}|H(x,y)|^\beta_+\, dxdy,(c_0-c_1)\int_{\mathbb R^2}|H(x,y)|^\beta_-\, dxdy,\beta}(z)).
\end{equation}

\begin{prop}(see \cite{KhoshnevisanNualart}\label{LEMappend1})\label{intsto}
Let H be a continuous compactly supported function from $\mathbb R^2$ to $\mathbb R$.
Let $(H_n)_n$ be a sequence of linear combination of indicators over rectangles converging pointwise to
$H$. Assume moreover that $(H_n)_n$ is a family of uniformly bounded functions with support in a same compact.
Then the sequence $\left(\int_{\mathbb R^2}H_n(x,y)\, dZ(x,y)\right)_n$ converges in probability
to a random variable with characteristic function $\Phi_{(c_0+c_1)\int_{\mathbb R^2}|H(x,y)|^\beta_+\, dxdy,(c_0-c_1)\int_{\mathbb R^2}|H(x,y)|^\beta_-\, dxdy,\beta}$.
\end{prop}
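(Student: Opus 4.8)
The plan is to build on the explicit characteristic-function formula (\ref{FCsimple}) for integrals of simple functions, and to upgrade the resulting distributional convergence to convergence in probability by a Cauchy argument. All the analytic content reduces to a dominated-convergence computation together with the continuity of $(A,B)\mapsto\Phi_{A,B,\beta}(z)$.

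First I would record two elementary facts. Since $z\mapsto|z|_\pm^\beta$ is continuous (here we use $\beta>0$), the pointwise convergence $H_n\to H$ gives $|H_n(x,y)|_\pm^\beta\to|H(x,y)|_\pm^\beta$ for every $(x,y)$; as the $H_n$ are uniformly bounded, say by $C_*$, and supported in one common compact $K$, the functions $|H_n|_\pm^\beta$ are dominated by the integrable function $C_*^\beta\mathbf 1_K$, so by dominated convergence
$$\int_{\mathbb R^2}|H_n(x,y)|_\pm^\beta\, dxdy\ \longrightarrow\ \int_{\mathbb R^2}|H(x,y)|_\pm^\beta\, dxdy,$$
the right-hand side being finite because $H$ is continuous with compact support. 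Applying the same reasoning to $H_n-H_m$ --- which is again a finite linear combination of indicators of rectangles (after refining the two underlying partitions into a common one), and which satisfies $\big||H_n-H_m|_\pm^\beta\big|\le c_\beta(|H_n-H|^\beta+|H_m-H|^\beta)$ for a constant $c_\beta$ --- gives $\int_{\mathbb R^2}|H_n-H_m|_\pm^\beta\, dxdy\to 0$ as $n,m\to\infty$.

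Next I would use the linearity of the integral on simple functions together with (\ref{FCsimple}): since $\int H_n\, dZ-\int H_m\, dZ=\int(H_n-H_m)\, dZ$,
$$\mathbb E\left[\exp\left(iz\Big(\int_{\mathbb R^2} H_n\, dZ-\int_{\mathbb R^2} H_m\, dZ\Big)\right)\right]=\Phi_{(c_0+c_1)\int|H_n-H_m|_+^\beta dxdy,\ (c_0-c_1)\int|H_n-H_m|_-^\beta dxdy,\ \beta}(z).$$
By the previous paragraph the two parameters tend to $(0,0)$, and since $\Phi_{0,0,\beta}\equiv 1$ and $\Phi_{A,B,\beta}(z)$ is continuous in $(A,B)$, the right-hand side tends to $1$ for every $z\in\mathbb R$. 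Hence $\int H_n\, dZ-\int H_m\, dZ\to 0$ in distribution, hence in probability, so $\big(\int H_n\, dZ\big)_n$ is Cauchy in probability and converges in probability to some random variable $W$. Applying (\ref{FCsimple}) to $H_n$ itself and using the convergence $\int|H_n|_\pm^\beta\to\int|H|_\pm^\beta$ together with the continuity of $\Phi_{\cdot,\cdot,\beta}(z)$ shows $\mathbb E[e^{iz\int H_n\, dZ}]\to\Phi_{(c_0+c_1)\int|H|_+^\beta dxdy,\ (c_0-c_1)\int|H|_-^\beta dxdy,\ \beta}(z)$; since convergence in probability forces convergence in distribution, $W$ has exactly this characteristic function, which is a genuine stable characteristic function because its first parameter is nonnegative and dominates the modulus of its second ($c_0,c_1\ge 0$).

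The one point that needs a little care is precisely this last passage: a bare application of L\'evy's continuity theorem to $(H_n)$ only yields convergence in distribution, so the Cauchy-in-probability step above is what actually produces a well-defined random variable $\int_{\mathbb R^2}H\, dZ:=W$. I would also remark that $W$ does not depend on the approximating sequence, since interleaving two admissible sequences gives another admissible sequence and the Cauchy argument then forces the two limits to coincide almost surely. Everything else is routine.
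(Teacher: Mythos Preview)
Your proof is correct and follows essentially the same route as the paper: apply (\ref{FCsimple}) to the differences $H_n-H_m$, use dominated convergence to drive the $L^\beta$-parameters to $0$, conclude Cauchy in probability, and identify the limiting characteristic function by another application of (\ref{FCsimple}) and dominated convergence. The only cosmetic difference is that the paper makes the continuity of $(A,B)\mapsto\Phi_{A,B,\beta}(z)$ explicit via the elementary bound $|e^{-a+ib}-e^{-a'+ib'}|\le|a-a'|+|b-b'|$, whereas you simply invoke continuity; your added remark on independence of the approximating sequence is a welcome bonus that the paper states but does not argue.
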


For a continuous compactly supported $H:\mathbb R^2\rightarrow\mathbb R$, 
we define $\int_{\mathbb R^2}H(x,y)\, dZ(x,y)$ as the limit in probability
given by Proposition \ref{intsto}
(observe that the limit does not depend on the choice of $(H_n)_n$).
\begin{proof}[Proof of Proposition \ref{LEMappend1}]
To prove the convergence in probability, it is enough to prove that
\begin{equation}\label{Cauchy}
\forall z\in\mathbb R,\ \ \ 
  \lim_{n,m\rightarrow +\infty}\mathbb E\left[\exp\left(iz\int_{\mathbb R}(H_n(x,y)-H_m(x,y))\, dZ_{x,y}\right)\right]=1.
\end{equation}
Observe that, for every real number $z$, we have
\begin{eqnarray*}
  && \left|\mathbb E\left[\exp\left(iz\int_{{\mathbb R}^2}(H_n(x,y)-H_m(x,y))\, dZ_{x,y}\right)\right]-1\right| \\
 &=&\left|\Phi_{(c_0+c_1)\int_{{\mathbb R}^2} |H_n(x,y)-H_m(x,y)|_+^\beta\, dxdy, (c_0-c_1)\int_{{\mathbb R}^2} |H_n(x,y)-H_m(x,y)|_+^\beta\, dxdy,\beta}(z)-1\right|\\
 &\le& C\int_{{\mathbb R}^2}  |H_n(x,y)-H_m(x,y)|^\beta\, dxdy(|c_0+c_1|+|c_0-c_1|)|z|^\beta,
\end{eqnarray*}
using the fact that $|e^{-a+ib}-e^{-a'+ib'}|\le|a-a'|+|b-b'|$ for any
real numbers $a,b,a',b'$ such that $a>0$ and $a'>0$.
Since $(H_n)_n$ converges pointwise and is uniformly bounded, we obtain (\ref{Cauchy})
by the Lebesgue dominated convergence theorem
(recall that $(H_n)_n$ is a sequence of uniformly bounded functions supported in a same compact).
Now the characteristic function  of the limit in probability $\int_{\mathbb R^2}H(x,y)\, dZ(x,y)$
is given by
\begin{eqnarray*}
 \mathbb E\left[\exp\left(iz\int_{\mathbb R^2}H(x,y)\, dZ(x,y)\right)\right]&=&
 \lim_{n\rightarrow +\infty} \mathbb E\left[\exp\left(iz\int_{\mathbb R^2}H_n(x,y)\, dZ(x,y)\right)\right]\\
 &=&\lim_{n\rightarrow+\infty}
   \Phi_{(c_0+c_1)\int_{\mathbb R^2}|H_n(x,y)|^\beta_+\, dxdy,(c_0-c_1)\int_{\mathbb R^2}|H_n(x,y)|^\beta_-\, dxdy,\beta}(z))\\
   &=&\Phi_{(c_0+c_1)\int_{\mathbb R^2}|H(x,y)|^\beta_+\, dxdy,(c_0-c_1)\int_{\mathbb R^2}|H(x,y)|^\beta_-\, dxdy,\beta}(z)),
\end{eqnarray*}
for every real number $z$.
\end{proof}

\end{appendix}

\end{document}